\crefname{equation}{}{}
\crefname{figure}{Figure}{Figures}
\crefname{assumption}{Assumption}{Assumptions}
\crefname{condition}{Condition}{Conditions}
\crefname{property}{Property}{Properties}
\newcommand{\cmark}{\ding{51}}%
\newcommand{\xmark}{\ding{55}}%
\crefname{equation}{}{}
\crefname{figure}{Figure}{Figures}
\crefname{assumption}{Assumption}{Assumptions}
\crefname{condition}{Condition}{Conditions}
\crefname{property}{Property}{Properties}
\crefname{fact}{Fact}{Facts}
\newtheorem{fact}{Fact}
\newtheorem{theorem}{Theorem}
\newtheorem{definition}{Definition}
\newtheorem{lemma}{Lemma}
\newtheorem{assumption}{Assumption}
\newtheorem*{corollary*}{Corollary}
\newtheorem{remark}{Remark}
\newtheorem{property}{Property}
\newcommand{\f}[1]{\mathbf{#1}}
\newcommand{\ft}[1]{\tilde{\mathbf{#1}}}
\newcommand{\lr}[1]{\left\langle #1 \right\rangle}
\newcommand{\krylov}[3]{\mathcal{K}_{#3}(\f{#1}, \f{#2})}
\newcommand{\Rdd}{\mathbb{R}^{d \times d}}
\newcommand{\Rd}{\mathbb{R}^{d}}
\newcommand{\zero}{\f{0}}
\newcommand{\argmin}[1]{\underset{#1}{\text{argmin}}}
\newcommand{\Range}{\text{Range}}
\newcommand{\Null}{\text{Null}}
\newcommand{\Span}{\text{Span}}
\newcommand*\vnorm[1]{\left\| #1\right\|}
\definecolor{uqp}{RGB}{152,24,147}
\newcommand*{\transpose}{%
	{\mathpalette\@transpose{}}%
}
\newcommand*{\@transpose}[2]{%
	\raisebox{\depth}{$\m@th#1\intercal$}%
}
\newcommand*{\T}{{\transpose}}
\renewcommand\th{\textsuperscript{th}\xspace}
\newcommand{\hf}{\frac{1}{2}}
\newcommand{\real}{\mathbb{R}}
\newcommand{\red}[1]{{\leavevmode\color{red}#1}}
\newcommand{\blue}[1]{{\leavevmode\color{blue}#1}}
\definecolor{forestgreen}{rgb}{0.13, 0.55, 0.13}
\title{Conjugate Direction Methods Under Inconsistent Systems}
\author{Alexander Lim\footnote{School of Mathematics and Physics, University of Queensland, Australia. Email : alexander.lim@uq.edu.au} \and Yang Liu \footnote{Mathematical Institute, University of Oxford, UK. Email: yang.liu@maths.ox.ac.uk} \and Fred Roosta\footnote{School of Mathematics and Physics, University of Queensland, Australia. Email : fred.roosta@uq.edu.au}}
\date{\today}
\begin{document}
	\maketitle
	\begin{abstract}
		Since the development of the conjugate gradient (CG) method in 1952 by Hestenes and Stiefel, CG, has become an indispensable tool in computational mathematics for solving positive definite linear systems. On the other hand, the conjugate residual (CR) method, closely related CG and introduced by Stiefel in 1955 for the same settings, remains relatively less known outside the numerical linear algebra community. 
		Since their inception, these methods -- henceforth collectively referred to as conjugate direction methods -- have been extended beyond positive definite to indefinite, albeit consistent, settings. Going one step further, in this paper, we investigate theoretical and empirical properties of these methods under inconsistent systems. Among other things, we show that small modifications to the original algorithms allow for the pseudo-inverse solution. Furthermore, we show that CR is essentially equivalent to the minimum residual method, proposed by Paige and Saunders in 1975, in such contexts. 
		Lastly, we conduct a series of numerical experiments to shed lights on their numerical stability (or lack thereof) and their performance for inconsistent systems. Surprisingly, we will demonstrate that, unlike CR and contrary to popular belief, CG can exhibit significant numerical instability, bordering on catastrophe in some instances.
	\end{abstract}

    \section{Introduction}
    Consider the following linear system,
\begin{equation}\label{eq:linear_system}
	\f{Ax} = \f{b},
\end{equation}
where $\f{A} \in \Rdd$ is a symmetric matrix and $\f{b} \in \Rd$. In 1952, Hestenes and Stiefel \cite{hestenes1952methods} developed the celebrated conjugate gradient (CG) method to solve \cref{eq:linear_system} when $\f{A}$ is positive definite (PD). Since then, CG has become a cornerstone and essential tool in many areas of applied mathematics, statistics, machine learning, and numerical optimization. For example, in solving for numerical solutions of partial differential equations, the discretization of a differential operator often leads to a positive definite matrix, making CG the go-to method. In statistical modeling and machine learning, CG is frequently the primary solver in methods involving kernel regression. In numerical optimization, the classical Newton-CG method \cite{nocedal1999numerical} has been the canonical example of truncated Newton-type methods for large-scale problems. 
A closely related technique to CG is the conjugate residual (CR) method, which was developed by Stiefel in 1955 \cite{stiefel1955relaxationsmethoden} for the same PD settings\footnote{In fact, without explicitly mentioning the name, the structure of the method is developed in the original paper of Hestenes and Stiefel \cite{hestenes1952methods}.}. However, despite its profound connections to CG, the CR method remains relatively less known outside the numerical linear algebra community. For example, to our knowledge, the only attempt at employing CR within Newton-type optimization methods is Dahito et al \cite{dahito2019conjugate}. 
	
Both these methods, henceforth collectively referred to as conjugate direction (CD) methods\footnote{Precisely speaking, the conjugate direction approach serves as a framework for deriving various methods, including CG and CR. However, in this paper, when we refer to CD, we specifically mean these two particular instances.}, are examples of Krylov subspace algorithms for solving \cref{eq:linear_system} when $ \f{A} $ is PD. 
However, advancements in science and engineering have led to a growing need for iterative solvers that can effectively address a broader range of challenges beyond PD systems. This includes scenarios where the matrix $ \f{A} $ is symmetric, but singular and/or indefinite, as well as inconsistent settings, namely $\f{b} \not\in \Range(\f{A})$, e.g., \cite{yoon2016solving,royer2020newton,roosta2022newton,mardal2011preconditioning}.
As a result, the popularity of CD, especially CG, to solve problems under these conditions continues to extend. 
There are studies \cite{fletcher2006conjugate,luenberger1969hyperbolic,luenberger1970conjugate,fasano2005planar,fasano2005planar_app,fasano2007lanczos,greenbaum1997iterative,sogabe2022krylov} providing comprehensive treatment of CD beyond positive definite systems. However, all these results operate under the assumption that the system is consistent, i.e. $ \f{b} \in \Range(\f{A}) $. 
	
In contrast, the minimum residual\footnote{We would like to caution the reader that Algorithm 5.3 in the book \cite{saad2003iterative} is called Minimal Residual Iteration which is entirely unrelated to MINRES.}
(MINRES) method by Paige and Saunders \cite{paige1975solution} extends naturally to all systems involving symmetric matrices, without relying on a consistency assumption and has shown great success in numerical optimization in recent years, e.g., \cite{roosta2022newton,lim2023complexity,liu2022newton,liu2022minres}. In comparison to CG or CR, however, MINRES is a significantly more complex algorithm and entails more intricate implementations. Therefore, understanding the properties and the behavior of the CD methods in inconsistent settings can be of great interest.
Before delving any deeper, it is essential to note that our objective in this paper is not to devise novel variants of CG or CR that can extend to inconsistent systems. Instead, our emphasis is on exploring the theoretical and empirical behavior of the original methods in such situations.

\subsubsection*{Contributions} For symmetric, but inconsistent, systems, the contributions of this paper can be informally summarized as follows:  
\begin{enumerate}[label = (\roman*)]
	\item We first show that the classical CG is fundamentally unable to recover any solution of the normal equation (\cref{thm:cg:Ar}). Subsequently, we provide a modification (\cref{alg:cg_pis}) that ensures convergence to the pseudo-inverse solution (\cref{thm:cg:pseudo}).
	\item We provide a variant of CR (\cref{alg:cr_pis}) that converges to the pseudo-inverse solution (\cref{thm:cr:pseudo}). We also establish that, in essence, CR and MINRES are equivalent (\cref{thm:mr=cr}).
	\item We numerically demonstrate that, beyond typical finite precision errors, CG--unlike CR--can exhibit borderline catastrophic instability in inconsistent settings. These instabilities can persist even for PD matrices with a few small eigenvalues. 
\end{enumerate}
	
\cref{table:cgcr} highlights some of the similarities and differences between vanilla CG and CR at termination.

\begin{table}[htbp]
	\centering
	\begin{tabular}{|c|wc{8em}|wc{8em}|wc{8em}|wc{8em}|}
		\hline
		& \multicolumn{2}{c|}{Consistent: $\f{b} \in \Range(\f{A})$} & \multicolumn{2}{c|}{Inconsistent: $\f{b} \notin \Range(\f{A})$}\\
		\cline{2-5}
		& CG & CR & CG & CR\\
		\hline
    	\multicolumn{1}{|c|}{\multirow{2}{*}{$\f{x} = \f{x}^\star$}} & \multicolumn{2}{c|}{} & \multirow{2}{*}{\red{\xmark}} & \blue{\cmark} \\
        & \multicolumn{2}{c|}{} & & (\cref{thm:cr:Ar})\\
        [3pt]
        \cline{1-1}\cline{4-5}
		\multicolumn{1}{|c|}{\multirow{4}{*}{$\f{x} = \f{x}^{+}$}} & \multicolumn{2}{c|}{} & \multirow{2}{*}{\red{\xmark}} & \multirow{2}{*}{\red{\xmark}}\\
        & \multicolumn{2}{c|}{} & &\\
        \cline{4-5}
        & \multicolumn{2}{c|}{\blue{\cmark}} & \blue{\cmark} & \blue{\cmark}\\
        & \multicolumn{2}{c|}{(\cref{fact:convergence_consistent})} & (w.\ \cref{alg:cg_pis}) & (w.\ \cref{alg:cr_pis})\\
        [3pt]
		\cline{1-1}\cline{4-5}
		\multicolumn{1}{|c|}{\multirow{2}{*}{$\f{r} = \zero$}} & \multicolumn{2}{c|}{} & \multirow{2}{*}{\red{\xmark}} & \multirow{2}{*}{\red{\xmark}} \\
        & \multicolumn{2}{c|}{} & & \\
		[3pt]
		\cline{1-1}\cline{4-5}
		\multicolumn{1}{|c|}{\multirow{2}{*}{$\f{Ar}  = \zero$}} & \multicolumn{2}{c|}{} & \red{\xmark} & \blue{\cmark}\\ 
        & \multicolumn{2}{c|}{} & (\cref{thm:cg:Ar}) & (\cref{thm:cr:Ar}) \\
        [3pt]
		\hline
		\multicolumn{1}{|c|}{\multirow{2}{*}{$\f{p}  = \zero$}} & \multicolumn{2}{c|}{} & \red{\xmark} & \red{\xmark} \\ 
        & \multicolumn{2}{c|}{\blue{\cmark}} & (\cref{lem:cg:Ap}) & (\cref{lem:cr:Ap}) \\
        [3pt]
		\cline{1-1}\cline{4-5}
		\multicolumn{1}{|c|}{\multirow{2}{*}{$\f{Ap}  = \zero$}} & \multicolumn{2}{c|}{(\cref{fact:pg=0_consistent})} & \blue{\cmark} & \blue{\cmark} \\ 
        & \multicolumn{2}{c|}{} & (\cref{lem:cg:Ap}) & (\cref{lem:cr:Ap})\\
        [3pt]
		\hline
	\end{tabular}
	\caption{Similarities and differences between CG (\cref{alg:cg}) and CR (\cref{alg:cr}) at termination. Here, $\f{x}$ is the final iterate, $\f{r} = \f{b} - \f{Ax}$ is the residual at $\f{x}$, $\f{p}$ is the update direction within the CD methods, $\f{x}^{+}$ and $\f{x}^\star$ denote the pseudo-inverse solution and a normal solution with $\f{y} \neq \f{0}$ from \cref{def:solution}, respectively. All quantities are at the termination of the respective algorithm. \label{table:cgcr}}
\end{table}

The rest of this paper is organized as follows. We end this section by introducing the notation and definitions that are used throughout this work. We first review the CD methods for consistent systems in \cref{sec:review}. Subsequently, we shift our focus to the central theme of our paper and present theoretical results for inconsistent systems in \cref{sec:inconsistent_system}. Finally, in \cref{sec:num_exp}, we provide numerical experiments to verify our theoretical results as well as to shed light on the numerical performance of CD methods in various situations.
	
	\subsubsection*{Notations and Definitions} We denote scalars, vectors, and matrices as lower-case, bold lower-case, and bold upper-case letters, e.g., $\alpha$, $\f{b}$ and $\f{A}$, respectively. 
    For two vectors $ \f{a} \in \Rd $ and $ \f{b}\in \Rd $, their Euclidean inner product is denoted as $\lr{\f{a}, \f{b}} = \f{a}^{\top}\f{b} $. 
    Similarly, for a symmetric matrix $\f{A} \in \Rdd$, the product $\f{a}^{\top}\f{Ab}$ (not necessarily an inner-product) is denoted by $\lr{\f{a},\f{b}}_{\f{A}}$. 
	The norm $\vnorm{\f{a}} = \sqrt{\lr{\f{a},\f{a}}}$ is the Euclidean norm, and  for a PD matrix $\f{A}$, we define the energy norm $\|\f{a}\|_{\f{A}} = \sqrt{\lr{\f{a},\f{A}\f{a}}}$. We write $\f{A} \succeq \f{0}$ and $\f{A} \succ \f{0}$ to indicate $\f{A}$ is positive semi-definite (PSD) and PD, respectively. 
	The Moore-Penrose pseudo-inverse of $\f{A}$ is denoted by $ \f{A}^\dagger $.	

	For a symmetric matrix $ \f{A} $ and $\f{b} \not\in \Range(\f{A})$, we denote the orthogonal decomposition of $ \f{b} $ with respect to $ \f{A} $ as
	\begin{align}
		\label{eq:orth_decomp}
		\f{b} \triangleq \ft{b} + \f{b}^\perp, \quad \text{where} \quad  \ft{b} \in \Range(\f{A}), \quad \text{and} \quad \f{b}^\perp \in \Null(\f{A}).
	\end{align} 
	So, for example, for consistent systems, we have $\f{b}^\perp = \f{0}$. By CG$(\f{A},\f{b})$, CR$(\f{A},\f{b})$ and MINRES$(\f{A},\f{b})$ we mean using CG, CR and MINRES to solve for a least-squares problem or a linear system with $\f{A}$ and $\f{b}$, respectively. 
	Furthermore, under inconsistent systems, we will often compare between two settings, say, CG$(\f{A}, \f{b})$ and its corresponding consistent case CG$(\f{A}, \ft{b})$, where $ \ft{b} $ is as in the orthogonal decomposition of $ \f{b} $ above. We denote $\alpha_{k-1}$, $\f{r}_{k-1}$, etc, to be the terms generated for solving the former system and use $\tilde{\alpha}_{k-1}$, $\ft{r}_{k-1}$ etc, to denote the terms generated for solving the latter system. The residual vector at iteration $k$ is denoted by $\f{r}_k = \f{b} - \f{Ax}_k$. 
	
	\begin{definition}[Normal Solutions and the Pseudo-inverse Solution]
		\label{def:solution}
		Any solution to the normal equation $\f{A}^{\top}\f{A}\f{x} = \f{A}^{\top}\f{b}$, namely $\f{x}^{\star} \triangleq \f{A}^\dagger\f{b} + (\f{I} - \f{A}^\dagger\f{A})\f{y}$ for some $\f{y} \in \mathbb{R}^d$, is referred to as a \textit{normal solution}. In particular, the \textit{pseudo-inverse solution} is given by $\f{x}^{+} \triangleq \f{A}^\dagger\f{b}$, i.e., when $\f{y} = \f{0}$.
	\end{definition}
    The Krylov subspace of degree $k \geq 1$, generated by $ \f{A} $ and $ \f{v} $, is denoted as 
    \begin{align*}
        \mathcal{K}_k(\f{A}, \f{v}) = \Span\{\f{v}, \f{Av}, \cdots, \f{A}^{k-2}\f{v}, \f{A}^{k-1}\f{v}\}.
    \end{align*}
    Recall that the grade of $\f{v}$ with respect to $\f{A}$ is defined as follows \cite{bjorck2015numerical}.
	\begin{definition}[Grade of $\f{v}$ with respect to $\f{A}$]\label{def:grades}
		The grade of $\f{v}$ with respect to $\f{A}$ is the positive integer $g \in \mathbb{N}$ such that
		\begin{align*}
			\dim(\krylov{A}{v}{k}) = \begin{cases}
				k, & k \leq g,\\
				g, & k > g.
			\end{cases}
		\end{align*}
	\end{definition}
	
	\begin{remark}[Initialization]
		\label{rem:ini}
		In this paper, we assume $\f{x}_0 = \f{0}$. Thus, the initial residual and the Krylov subspace become $\f{r}_0 = \f{b} - \f{Ax}_0 = \f{b}$ and $\krylov{A}{b}{k} = \Span(\f{b}, \cdots, \f{A}^{k-1}\f{b})$ respectively. Since extensions to arbitrary initialization are straightforward, we exclude them for the sake of brevity.
	\end{remark}
	
	\begin{remark}[Symmetric Matrix and Non-zero Right-hand side]
		Throughout the paper, for simplicity, we consider systems involving real symmetric matrices. We note that it is straightforward to adjust all statements for Hermitian matrices. Furthermore, we always assume $ \f{b} \neq \zero $ to avoid triviality.
	\end{remark}
	
	For indefinite matrices, we may encounter a non-zero vector $\f{x} \notin \Null(\f{A})$ for which $\lr{\f{x}, \f{Ax}} = 0$, i.e., $ \f{x} \perp \f{Ax}$. Such a vector is henceforth referred to as a zero-curvature direction.
	\begin{definition}[Zero-curvature Direction]
		\label{def:zero_curv}
		For a given indefinite matrix $ \f{A} $, a non-zero vector $\f{x} \notin \Null(\f{A})$ is a zero-curvature direction if $\lr{\f{x}, \f{Ax}} = 0$. 
	\end{definition}
 	\section{Consistent Systems: Review}\label{sec:review}
	Conjugate direction methods have been extensively studied in the literature for consistent settings where $ \f{b} \in \Range(\f{A}) $. In this section, we offer a concise overview of these methods in this context and present some results that, while potentially well-known, may not be readily available in a unified source in the literature.
	
	For solving a linear system \cref{eq:linear_system} with a positive definite matrix $ \f{A} \succ \zero $, the $ k\th $ iteration of CG (\cref{alg:cg}) can be characterized as the minimizer of the quadratic optimization problem
	\begin{align}\label{eq:cg_quadratic}
		\f{x}_k = \argmin{\f{x}\in\krylov{A}{b}{k}} \; \hf\lr{\f{x},\f{A}\f{x}} - \lr{\f{x}, \f{b}}.
	\end{align}
	Equivalently, one can derive $ \f{x}_k $ by enforcing the Petrov-Galerkin condition \cite{bjorck2015numerical}, as the vector in the search space $ \krylov{A}{b}{k} $ whose residual $\f{r}_{k}$ is orthogonal to a constraint space, which in the case of CG is taken to be the same as $\krylov{A}{b}{k}$. When $ k = g $, i.e., the grade of $ \f{b} $ with respect to $ \f{A} $, the solution that minimizes \cref{eq:cg_quadratic} is also the solution to \cref{eq:linear_system}.
	
	In the same setting, CR (\cref{alg:cr}) can be driven from CG by replacing the Euclidean inner-product $ \lr{.,.} $ with the one involving $ \f{A} \succ \zero $ as $ \lr{.,.}_{\f{A}} $. In this light, the $k\th$ iteration of CR amounts to the solution of a different quadratic optimization problem as
	\begin{align}\label{eq:cr_quadratic}
		\f{x}_k = \argmin{\f{x}\in\krylov{A}{b}{k}} \; \hf\lr{\f{x},\f{A}\f{x}}_{\f{A}} - \lr{\f{x}, \f{b}}_{\f{A}}.
	\end{align}
	Consequently, the Petrov-Galerkin condition here involves the same search space as that of CG, namely $ \krylov{A}{b}{k} $, but the constraint space is now $\f{A} \krylov{A}{b}{k}$. Furthermore, it is easy to see that \cref{eq:cr_quadratic} can be formulated as 
	\begin{align}\label{eq:minres_least_squares}
		\f{x}_k = \argmin{\f{x}\in\krylov{A}{b}{k}} \; \hf\|\f{b} - \f{Ax}\|^{2},
	\end{align}
	highlighting the well-known equivalence between CR and MINRES for when $ \f{A} \succ \zero $ \cite{fong2012cg}. We remind that while both CG and CR methods involve one matrix-vector product in each iteration, CR requires one more vector of storage and one more vector update compared with CG. Indeed, in \cref{alg:cr}, after computing $\f{Ar}_k$, one can calculate $ \f{Ap}_k $ as $\f{Ap}_k = \f{Ar}_k + \beta_k\f{Ap}_{k-1}$; see \cite{bjorck2015numerical} for more details.\par
    
    \noindent\begin{minipage}{0.49\textwidth}
    \begin{algorithm}[H]
    \caption{Conjugate Gradient}\label{alg:cg}
	\begin{algorithmic}[1]
		\Require $\f{A}$ and $\f{b}$
		\State $k = 1$, $\f{x}_{0} = \zero$, $\f{r}_0 = \f{p}_0 = \f{b}$
		\While{Not Terminated}
		\State $\alpha_{k-1} = \|\f{r}_{k-1}\|^2/\lr{\f{p}_{k-1}, \f{Ap}_{k-1}}$
		\State $\f{x}_{k} = \f{x}_{k-1} + \alpha_{k-1}\f{p}_{k-1}$
		\State $\f{r}_{k} = \f{r}_{k-1} - \alpha_{k-1}\f{Ap}_{k-1}$
		\State $\beta_{k-1} = \|\f{r}_{k}\|^2/\|\f{r}_{k-1}\|^2$
		\State $\f{p}_{k} = \f{r}_{k} + \beta_{k-1}\f{p}_{k-1}$
		\State $k = k + 1$
		\EndWhile
		\State \Return $\f{x}_k$
	\end{algorithmic}
    \end{algorithm}
    \end{minipage}
    \hfill
    \begin{minipage}{0.49\textwidth}
    \begin{algorithm}[H]
    \caption{Conjugate Residual}\label{alg:cr}
	\begin{algorithmic}[1]
		\Require $\f{A}$ and $\f{b}$
		\State Let $k = 1$, $\f{x}_{0} = \zero$, $\f{r}_0 = \f{p}_0 = \f{b}$
		\While{Not Terminated}
		\State $\alpha_{k-1} = \|\f{r}_{k-1}\|^2_{\f{A}}/\lr{\f{p}_{k-1}, \f{Ap}_{k-1}}_{\f{A}}$
		\State $\f{x}_{k} = \f{x}_{k-1} + \alpha_{k-1}\f{p}_{k-1}$
		\State $\f{r}_{k} = \f{r}_{k-1} - \alpha_{k-1}\f{Ap}_{k-1}$
		\State $\beta_{k-1} = \|\f{r}_{k}\|^2_{\f{A}}/\|\f{r}_{k-1}\|^2_{\f{A}}$
		\State $\f{p}_{k} = \f{r}_{k} + \beta_{k-1}\f{p}_{k-1}$
		\State $k = k + 1$
		\EndWhile
		\State \Return $\f{x}_k$
	\end{algorithmic}
    \end{algorithm}
    \end{minipage}\smallskip
  
    Going beyond PD settings, there have been several efforts in extending the CD methods. However, in all these attempts, the underlying assumption is that the linear system is consistent. The extension to consistent systems involving positive semi-definite (PSD) matrices is relatively straightforward and has been explored in the literature \cite{greenbaum1997iterative,hayami2018convergence,hayami2001behaviour}. In the work of Kaasschieter \cite{kaasschieter1988preconditioned}, the behavior of preconditioned CG under positive semi-definite, but consistent, systems is studied. 
	The main challenge in extending the CD methods to indefinite settings is the risk of encountering division by zero. In CG, for example, this occurs when $\lr{\f{p}_{k-1}, \f{Ap}_{k-1}} = 0$ at which point $\alpha_{k-1}$ is no longer well-defined and CG breaks down. This can happen when either $\f{A p}_{k-1} = \zero$ or $\f{Ap}_{k-1} \perp \f{p}_{k-1}$. We will show later that the former case constitutes a breakdown, as it signals the final iteration and CG can be safely terminated (\cref{lem:cg:Ap}). The latter case, however, is considered an ``unlucky breakdown''. For CG applied to indefinite, but consistent, settings, this situation is handled by employing the concept of a hyperbolic pair \cite{fletcher2006conjugate,luenberger1969hyperbolic}. Likewise, there are works \cite{fasano2005planar,fasano2005planar_app,fasano2007lanczos}, which developed planar CG, offering an alternative approach for handling unlucky breakdowns within CG for indefinite, but consistent, settings. 
	In indefinite settings, similar breakdowns can occur in CR when $\lr{\f{r}_{k-1}, \f{Ar}_{k-1}} = 0$ at which point $\beta_{k-1}$ is not well-defined and CR breaks down. This can happen when either $ \f{Ar}_{k-1} = \zero $ or $\f{Ar}_{k-1} \perp \f{r}_{k-1}$. Again, we will later show that the former case is a breakdown that is theoretically guaranteed by \cref{thm:cr:Ar}, while the latter scenario is again characterized as an unlucky breakdown.
	
	As mentioned earlier, our goal in this paper is not to design new variants of CG or CR that can extend to indefinite systems. Instead, our emphasis is on exploring the convergent behavior of the original methods in such situations. As a result, to exclude cases where the algorithm encounters an unlucky breakdown in indefinite settings, we make an umbrella assumption that a zero-curvature, as in \cref{def:zero_curv}, is not encountered in the algorithm prior to termination.
	\begin{assumption}[No Unlucky Breakdown]\label{assumpt:unlucky}
		If $\f{A}$ is indefinite, the CD methods do not encounter a zero-curvature direction as in \cref{def:zero_curv}.
	\end{assumption}
    Without \cref{assumpt:unlucky}, the convergence behavior of CG and CR for inconsistent systems can generally be studied based on whether or not an unlucky breakdown occurs during the iterations. In the case of an unlucky breakdown, both CG and CR fail to converge by definition, which is an uninteresting scenario. The more compelling question is to explore their convergence behavior in the absence of an unlucky breakdown throughout the iterations. Therefore, to align with our goal of investigating the convergence of CG and CR for inconsistent indefinite systems, we exclude the former case and focus on the latter, justifying the introduction of \cref{assumpt:unlucky}.
    Furthermore, we note that \cref{assumpt:unlucky} is only required for indefinite settings. This is because it turns out that for positive semi-definite matrices, the only breakdown is a lucky one, which only happens at the very last iteration, and signals that the algorithm has indeed reached the grade; see \cref{lem:cg:Ap,thm:cr:Ar}. 
    In addition, in most practical applications, unlucky breakdowns are virtually never encountered, which is consistent with the observations in \cite{sogabe2022krylov}. Therefore, we argue that \cref{assumpt:unlucky} is reasonable in our context.

    We note in passing that, in some applications, encountering a zero-curvature direction while solving \cref{eq:linear_system} can be algorithmically beneficial. For instance, in Newton-type optimization methods, where $\f{A}$ and $\f{b}$ typically represent the Hessian and the negative gradient of a function, respectively, a zero-curvature direction can be leveraged to construct effective descent directions (see, e.g., \cite{liu2022newton,lim2023complexity,royer2018complexity}).
	
	\begin{remark}
		One might suspect that with $ \f{x}_{0} $ if, at the first iteration, $\langle \f{b}, \f{Ab} \rangle \neq 0$, subsequent iterations would also avoid an unlucky breakdown. However, as noted in \cite{fong2011minimum}, this is not necessarily the case. The following example,
		\[\f{A} = \begin{bmatrix}
			1 & 0 & 0\\ 
			0 & -1 & 0\\
			0 & 0 & 2
		\end{bmatrix} \quad \quad \f{b} = \begin{bmatrix}
			1 \\ -1/\sqrt{11}\\ 1,
		\end{bmatrix}\]
		shows that, even though $\lr{\f{b},\f{Ab}} \neq 0$, at the second iteration of CG, we have $\lr{\f{p}_1, \f{A}\f{p}_1} = 0$ . 
	\end{remark}

    To end this section, we briefly outline some important, and well-known, facts about the behavior of CD methods under consistent systems. The proofs are either omitted or are deferred to \cref{sec:appendix:results} for completeness. 
    \begin{fact}[Pseudo-inverse Solution Under Consistent Systems]\label{fact:convergence_consistent}
		Let $\f{b} \in \Range(\f{A})$. Under \cref{assumpt:unlucky}, \cref{alg:cg,alg:cr} converge to the pseudo-inverse solution $ \f{x}_{g} = \f{A}^{\dagger}\f{b} $ and $\f{r}_{g} = \zero$.
	\end{fact}
    \begin{fact}\label{fact:pg=0_consistent}
        Let $\f{b} \in \Range(\f{A})$. Under \cref{assumpt:unlucky}, for \cref{alg:cg,alg:cr}, we have $\f{p}_k = \f{0}$ if and only if $k = g$.
    \end{fact}
    For consistent systems, under \cref{assumpt:unlucky}, both the residual vector (\cref{fact:convergence_consistent}), and the update direction (\cref{fact:pg=0_consistent}) are zero at termination for the CD methods. \cref{fact:convergence_consistent} has been shown in the works of Hayami \cite{hayami2018convergence,hayami2001behaviour} for semi-definite matrices. It is a straightforward extension to the indefinite matrices with \cref{assumpt:unlucky} in place, and the proof of \cref{fact:convergence_consistent} is a direct consequence of \cref{lem:solution_in_krylov}, which can be found in \cref{sec:appendix:results}. 
    \cref{fact:pg=0_consistent} indicates that, under consistent systems, not only does $\f{r}_g = \f{0}$ at termination, but $\f{p}_g = \f{0}$ as well. While this fact may seem trivial, it is worth noting as it sharply contrasts with the behavior of CG under inconsistent systems, where $\f{Ar}$ is never zero (\cref{thm:cg:Ar}), but $\f{Ap}$ is zero at termination (\cref{lem:cg:Ap}).\label{sec:intro}

    \section{Inconsistent Systems}
    Under consistent systems, and in the absence of round-off errors, the orthogonality and $\f{A}$-conjugacy are the distinctive features of the CD methods. These properties play a key role in the development of the characteristically clean and concise algorithmic procedure of these methods. However, as long as the algorithm has not encountered an unlucky breakdown, these properties continue to hold even in inconsistent settings. The proof of \cref{prop:properties} follows the standard line of reasoning in the consistent case, and hence is deferred to \cref{sec:appendix:unify}; see \cref{lem:properties_CD,lem:expanding_subspace}.
	
	\begin{property}
		\label{prop:properties}
		Consider any real symmetric matrix $\f{A}$, let $\{\f{p}_0, \cdots, \f{p}_{k-1}\}$ and $\{\f{r}_0, \cdots, \f{r}_{k-1}\}$ be the update directions and residuals generated by \cref{alg:cg,alg:cr} up to iteration $k - 1$. 
		\begin{enumerate}[label = (\alph*)]
			\item \label{property:cgcr} For both \cref{alg:cg,alg:cr}, we have $\dim(\krylov{A}{b}{k}) = k$ and,
			\begin{align*}
				\Span(\f{r}_0, \cdots, \f{r}_{k-1}) &= \Span(\f{p}_0, \cdots, \f{p}_{k-1}) = \krylov{A}{b}{k}
			\end{align*}
			\item \label{property:cg}
			In CG, \cref{alg:cg}, for all $0 \leq i < j \leq k-1$, we have
			\begin{align*}
				\lr{\f{r}_i, \f{r}_j} = 0, \quad \lr{\f{p}_i, \f{Ap}_j} = 0 \quad \text{and} \quad \lr{\f{r}_j, \f{p}_i} = 0;
			\end{align*}
           while for all $0 \leq j \leq i \leq k-1$, we have $\lr{\f{r}_j, \f{p}_i} = \vnorm{\f{r}_i}^2$.
			\item \label{property:cr}
			In CR, \cref{alg:cr}, for all $0 \leq i < j \leq k-1$, we have
			\begin{align*}
				\lr{\f{r}_i, \f{Ar}_j} = 0, \quad \text{and} \quad \lr{\f{Ap}_i, \f{Ap}_j} = 0.
			\end{align*}
		\end{enumerate}
	\end{property}
	
Note that \cref{prop:properties} does not require \cref{assumpt:unlucky} since these properties hold prior to a potential occurrence of an unlucky breakdown. Indeed, \cref{assumpt:unlucky} is only required when we need to ensure that \cref{alg:cg,alg:cr} can successfully generate all iterates up to termination.

\subsection{Conjugate Gradient}\label{sec:cg}
	
Under an inconsistent system, there is a non-trivial, and a prior unknown, lower bound on the norm of the residual. Indeed, $\vnorm{\f{r}}^{2} = \vnorm{\f{b} - \f{Ax}}^{2} = \|\f{b}^\perp\|^2 + \|\ft{b} - \f{Ax}\|^2 \geq \|\f{b}^\perp\|^2 $. Thus, termination criterion based on $ \vnorm{\f{r}} $ is no longer practical. The alternative is to observe $\vnorm{\f{Ar}} \rightarrow 0$, as $\|\f{Ar}\| = 0$ indicates achieving a normal solution. However, we now show that CG is fundamentally unable to recover normal solutions in such settings. While it has been empirically observed that CG exhibits numerical instability under inconsistent systems, \cref{thm:cg:Ar} provides a theoretical analysis of CG's failure to converge in such settings.
\begin{theorem}[CG's Inability to Recover Normal Solutions]\label{thm:cg:Ar}
	Let $ \f{b} \notin \Range(\f{A}) $ and $ \f{b} \notin \Null(\f{A})$. Under \cref{assumpt:unlucky}, in \cref{alg:cg}, we have $\f{A r}_k \neq \f{0}$, for all $0 \leq k \leq g-1$.
\end{theorem}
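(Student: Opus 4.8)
The plan is to argue by contradiction. The key observation is that if $\f{A}\f{r}_k = \zero$, then the residual $\f{r}_k$ is forced to coincide with the null component $\f{b}^\perp$ of $\f{b}$ from the orthogonal decomposition \cref{eq:orth_decomp}; but this contradicts the Petrov--Galerkin orthogonality satisfied by CG, since $\f{r}_k \perp \krylov{A}{b}{k}$ while $\f{b} \in \krylov{A}{b}{k}$. First I would dispose of the base case $k = 0$ directly: with $\f{x}_0 = \zero$ we have $\f{r}_0 = \f{b}$, so $\f{A}\f{r}_0 = \f{A}\f{b} \neq \zero$ by the hypothesis $\f{b} \notin \Null(\f{A})$ (this already settles the case $g = 1$).

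Now fix $1 \le k \le g-1$ and suppose, for contradiction, that $\f{A}\f{r}_k = \zero$. Under \cref{assumpt:unlucky} the iterate $\f{x}_k$ is well defined, and by \cref{prop:properties}\ref{property:cgcr} it lies in $\Span(\f{p}_0,\dots,\f{p}_{k-1}) = \krylov{A}{b}{k}$; in particular $\f{A}\f{x}_k \in \Range(\f{A})$. Writing $\f{b} = \ft{b} + \f{b}^\perp$ as in \cref{eq:orth_decomp}, we get
\[
\f{r}_k = \f{b} - \f{A}\f{x}_k = \bigl(\ft{b} - \f{A}\f{x}_k\bigr) + \f{b}^\perp ,
\]
which decomposes $\f{r}_k$ into a $\Range(\f{A})$ part and a $\Null(\f{A})$ part. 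Applying $\f{A}$ and using $\f{A}\f{b}^\perp = \zero$ yields $\f{A}(\ft{b} - \f{A}\f{x}_k) = \zero$, so $\ft{b} - \f{A}\f{x}_k \in \Range(\f{A}) \cap \Null(\f{A}) = \{\zero\}$ (the intersection is trivial because $\f{A}$ is symmetric, which is exactly what makes \cref{eq:orth_decomp} an orthogonal direct sum). Hence $\f{A}\f{x}_k = \ft{b}$ and $\f{r}_k = \f{b}^\perp$.

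To close the argument I would play this against orthogonality. Since $k \ge 1$, we have $\f{b} = \f{p}_0 \in \krylov{A}{b}{k}$, so \cref{prop:properties}\ref{property:cg} gives $\lr{\f{r}_k, \f{b}} = \lr{\f{r}_k, \f{p}_0} = 0$. On the other hand, $\f{r}_k = \f{b}^\perp$ together with $\f{b}^\perp \perp \ft{b}$ gives $\lr{\f{r}_k, \f{b}} = \lr{\f{b}^\perp, \ft{b} + \f{b}^\perp} = \vnorm{\f{b}^\perp}^2$, and the hypothesis $\f{b} \notin \Range(\f{A})$ forces $\f{b}^\perp \neq \zero$, i.e. $\vnorm{\f{b}^\perp}^2 > 0$ --- a contradiction. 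This proves $\f{A}\f{r}_k \neq \zero$ for all $0 \le k \le g-1$.

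The proof is short once the right reduction is found, so the ``main obstacle'' is really conceptual rather than computational: recognizing that $\f{A}\f{r}_k = \zero$ collapses $\f{r}_k$ onto $\f{b}^\perp$. A tempting but unnecessary detour is to first relate the grade $g$ of $\f{b}$ to the grade of $\ft{b}$ and then reason about which Krylov subspace first contains a solution of $\f{A}\f{x} = \ft{b}$; the argument above sidesteps this entirely by using the \emph{specific} CG iterate $\f{x}_k$ (through its orthogonality property) rather than the mere existence of a solution in the search space.
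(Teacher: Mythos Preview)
Your proof is correct and takes a genuinely different route from the paper's. The paper argues purely through the CG recurrences: assuming $\f{A}\f{r}_k = \zero$, it writes $\f{A}\f{p}_k = \beta_{k-1}\f{A}\f{p}_{k-1}$, substitutes this into $\f{r}_k = \f{r}_{k-1} - \alpha_{k-1}\f{A}\f{p}_{k-1}$, and takes the inner product with $\f{p}_{k-1}$ to force $\lr{\f{p}_{k-1},\f{r}_{k-1}} = 0$, contradicting $\lr{\f{p}_{k-1},\f{r}_{k-1}} = \vnorm{\f{r}_{k-1}}^{2}$ from \cref{prop:properties}\ref{property:cg}. Your argument is more geometric: you use the orthogonal splitting $\Rd = \Range(\f{A}) \oplus \Null(\f{A})$ to pin down $\f{r}_k = \f{b}^\perp$ exactly, and then derive the contradiction from $\lr{\f{r}_k,\f{p}_0} = 0$ versus $\lr{\f{b}^\perp,\f{b}} = \vnorm{\f{b}^\perp}^2 > 0$. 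The paper's approach stays closer to the algorithm and avoids invoking the decomposition \cref{eq:orth_decomp}; your approach yields the stronger intermediate fact $\f{r}_k = \f{b}^\perp$, which foreshadows the later identification in \cref{lem:pg_=_r*} of $\f{p}_{g-1}$ as a scalar multiple of $\f{b}^\perp$ and makes the obstruction (CG's residual cannot collapse onto the null component before the final step) more transparent.
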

\begin{proof}
	We prove the claim by contradiction. Clearly, by the assumption on $ \f{b} $, $ \f{Ar}_{0} = \f{Ab} \neq \zero $. Suppose for some $ 1 \leq k \leq g-1 $, $\f{A}\f{r}_{k} = \zero$. Using $\f{p}_{k} = \f{r}_{k} + \beta_{k-1}\f{p}_{k-1}$, we get $\f{A}\f{p}_{k} =  \beta_{k-1} \f{A}\f{p}_{k-1}$. By the assumption on $ \f{b} $, we have $ \beta_{k-1} \neq 0 $. Subsequently, from $\f{r}_{k} = \f{r}_{k-1} - \alpha_{k-1}\f{Ap}_{k-1}$, we get $\f{r}_{k} = \f{r}_{k-1} - \alpha_{k-1}\f{Ap}_{k}/\beta_{k-1}$, and by the inner-product with $\f{p}_{k-1}$ and using \cref{prop:properties}-\labelcref{property:cg}, it gives $\lr{\f{p}_{k-1},\f{r}_{k-1}} = 0$. However, by \cref{prop:properties}-\labelcref{property:cg}, we also have $\lr{\f{p}_{k-1},\f{r}_{k-1}} = \vnorm{\f{r}_{k-1}}^{2} \neq 0$, for $0 \leq k \leq g - 1$, which leads to a contradiction.
\end{proof}

\begin{remark}
    CG's inability to recover a solution of the normal equation can be better understood through the framework from which it is derived. Specifically, CG is a particular algorithmic implementation of the Orthogonal-Residual (OR) framework \cite{greenbaum1997iterative,choi2006iterative}, which applies to square, but not necessarily symmetric, matrices. When restricted to PD matrices, the OR framework can be algorithmically implemented  as \cref{alg:cg}. However, in \cref{sec:appendix:or}, we demonstrate that any algorithm derived from the OR framework cannot converge to a normal solution for symmetric but inconsistent systems (see \cref{thm:or}). Since CG is derived from OR, it inherits this ``non-convergent'' property.
\end{remark}
	
While \cref{thm:cg:Ar} for inconsistent systems stands in sharp contrast to \cref{fact:convergence_consistent} for consistent systems, it turns out that the last iterate of CG for inconsistent settings can still be characterized, in an analogous way to \cref{fact:pg=0_consistent} for consistent systems.
	\begin{lemma}
		\label{lem:cg:Ap}
		Let $\f{b} \notin \Range(\f{A})$. Under \cref{assumpt:unlucky}, in \cref{alg:cg} we have $\f{A p}_{k-1} = \f{0}$ if and only if $k = g$. Furthermore, $ \f{p}_{g-1} \neq \zero$.
	\end{lemma}
	\begin{proof}
		Let $ k = g $. By \cref{prop:properties}-\labelcref{property:cgcr} and \cref{def:grades}, we note that $\f{Ap}_{g-1} \in \krylov{A}{b}{g}$ and
		\begin{align*}
			\f{Ap}_{g-1} & = \sum_{j=1}^{g} c_{j-1} \f{r}_{j-1}.
		\end{align*}
		In addition, from \cref{prop:properties}-\labelcref{property:cg}, we have $\lr{\f{p}_{i-1}, \f{Ap}_{g - 1}} = 0$, for all $1 \leq i < g - 1$. For $i=1$, $0 = \lr{\f{p}_0, \f{Ap}_{g - 1}} = c_0 \lr{\f{p}_0, \f{r}_0} = c_0 \|\f{r}_0\|^2$,
		which since $\|\f{r}_0\| \neq 0$ implies $c_0 = 0$. Similarly, for $2 \leq i < g - 1$, 
		\begin{align*}
			0 &= \lr{\f{p}_{i-1}, \f{Ap}_{g-1}} = \lr{\f{r}_{i-1} + \beta_{i-2}\f{p}_{i-2}, \f{Ap}_{g-1}} = \lr{\f{r}_{i-1}, \f{Ap}_{g-1}}\\
			&= \sum_{j=1}^{g} c_{j-1} \lr{\f{r}_{i-1}, \f{r}_{j-1}} = c_{i-1} \|\f{r}_{i-1}\|^2,
		\end{align*}
		where the last equality follows from \cref{prop:properties}-\labelcref{property:cg}. Since $\|\f{r}_{i-1}\| \neq 0$, it follows that $c_{i-1} = 0$. As a result, we must have $\f{Ap}_{g-1} = c_{g-1}\f{r}_{g-1}$. Now, if $c_{g-1} = 0$, then we are done. Otherwise, assuming $\f{Ap}_{g-1} \not= \zero$, since $\f{r}_{g-1} \neq \zero$, we can, in principle, proceed to the next iteration by calculating $\alpha_{g-1} = \|\f{r}_{g-1}\|^2 / \lr{\f{p}_{g-1},\f{Ap}_{g-1}}$. We first note that by \cref{prop:properties}-\labelcref{property:cg}, we have 
		\begin{align*}
			c_{g-1} = \frac{\lr{\f{p}_{g-1},\f{Ap}_{g-1}}}{\lr{\f{p}_{g-1},\f{r}_{g-1}}} = \frac{\lr{\f{p}_{g-1},\f{Ap}_{g-1}}}{\lr{\f{r}_{g-1} + \beta_{g-2}\f{p}_{g-2},\f{r}_{g-1}}} = \frac{\lr{\f{p}_{g-1},\f{Ap}_{g-1}}}{\|\f{r}_{g-1}\|^{2}} = \frac{1}{\alpha_{g-1}}.
		\end{align*}
		It follows that $\f{r}_{g} = \f{r}_{g-1} - \alpha_{g-1}\f{Ap}_{g-1} = \f{r}_{g-1} - \alpha_{g-1}c_{g-1}\f{r}_{g-1} = \zero$,
		which is impossible since $\f{b} \notin \Range(\f{A})$. By reductio ad absurdum, this implies $\f{Ap}_{g-1} = \zero$.
		
		\bigskip 
		\noindent
		Conversely, let $\f{Ap}_{k-1} = \f{0}$ for some $ 1 \leq k \leq g $. Firstly, we show that $\f{p}_{k-1} \not= \zero$. Clearly, if $ k=1 $, we have $ \f{p}_{0} = \f{b} \neq \zero $. If $ k \geq 2 $, from $ \f{p}_{k-1} = \f{r}_{k-1} + \beta_{k-2}\f{p}_{k-2} $ and \cref{prop:properties}-\labelcref{property:cg}, it follows that 
		\begin{align*}
			\vnorm{\f{p}_{k-1}}^{2} & = \vnorm{\f{r}_{k-1}}^{2} + 2\beta_{k-2}\lr{\f{r}_{k-1}, \f{p}_{k-2}} + \beta_{k-2}^{2}\vnorm{\f{p}_{k-2}}^{2}\\
            &= \vnorm{\f{r}_{k-1}}^{2} + \beta_{k-2}^{2}\vnorm{\f{p}_{k-2}}^{2} \geq \vnorm{\f{r}_{k-1}}^{2} \neq 0.
		\end{align*}
		As a result, since $\f{p}_{k-1} \in \krylov{A}{b}{k}$ and $\f{p}_{k-1} \neq \f{0}$, we can write $ \f{p}_{k-1}  = \sum_{i=0}^{k-1} c_{i}\f{A}^{i}\f{b} $ and at least one $c_{j}$ must be non-zero. Hence, $\f{0} = \f{Ap}_{k-1}  = \sum_{i=0}^{k-1} c_{i}\f{A}^{i+1}\f{b}$, which implies $\dim(\krylov{A}{b}{k+1}) = \dim(\krylov{A}{b}{k})$, and so we must have $k = g$.     
	\end{proof}
	
	\begin{remark}
		\label{rem:grade}
		There is a subtlety in how the grade of $ \f{b} $ with respect to $ \f{A} $ is reflected in the total number of iterations before termination. If $ \f{b} \in \Range(\f{A}) $, CG terminates when $ k = g $ (\cref{fact:convergence_consistent,fact:pg=0_consistent}), while for $ \f{b} \notin \Range(\f{A}) $, the termination occurs at $ k = g-1 $ (\cref{lem:cg:Ap}). However, in both cases, the total number of iterations is determined by the number of non-zero $\f{b}$-relevant eigenvalues\footnote{These are distinct eigenvalues whose corresponding eigenspace is not entirely orthogonal to $ \f{b} $; see the work \cite{liu2022newton} for more details} of $\f{A}$. Indeed, suppose $ \f{b} \notin \Range(\f{A}) $ and consider its decomposition as in \cref{eq:orth_decomp}. Let $g$ and $\tilde{g}$ denote the grade of $\f{b}$ and $\tilde{\f{b}}$, respectively, with respect to $\f{A}$. CG$(\f{A}, \ft{b})$ and CG$(\f{A}, \f{b})$ terminate after $\tilde{g}$ and $g - 1$ iterations, respectively. On the other hand, noting that $ \ft{b} $ is orthogonal to the eigenspace corresponding to the zero eigenvalues of $ \f{A} $, for any eigenvector $ \f{v} $ of $ \f{A} $, we have 
		\begin{align*}
			\lr{\f{b},\f{v}} = \langle\ft{b},\f{v}\rangle + \langle\f{b}^{\perp},\f{v}\rangle = \left\{\begin{array}{ll}
				\langle\ft{b},\f{v}\rangle, & \text{if } \f{v}\in \Range(\f{A}), \\ \\
				\langle \f{b}^{\perp},\f{v}\rangle, & \text{if } \f{v}\in \Null(\f{A}).
			\end{array}\right.
		\end{align*}
		That is, the number of $\f{b}$-relevant eigenvalues is exactly one more than the number of $\ft{b}$-relevant ones. As a result, from Lemma 1 in the work of Liu and Roosta\cite{liu2022newton}, it follows that $\tilde{g} = g - 1$, which implies that both situations result in precisely the same number of iterations. 
	\end{remark}
	
	\cref{thm:cg:Ar} implies that, for inconsistent systems, CG's output might not necessarily correspond to any solutions. Once $ k=g-1 $, by \cref{lem:cg:Ap}, $\alpha_{g-1}$ is no longer well-defined and the update direction $\f{p}_{g-1}$ is not used. One may na\"{i}vely attempt to construct $\f{x}_g = \f{x}_{g-1} + \alpha\f{p}_{g-1}$ for some suitable $\alpha$ in an effort to recover a normal solution. However, for any such update $\f{x}_g  = \f{x}_{g-1} + \alpha\f{p}_{g-1}$, we have $\f{r}_g  = \f{r}_{g-1}$, and hence by \cref{thm:cg:Ar}, $ \f{x}_{g} $ cannot be a normal solution. Nonetheless, we now show that a modification to CG theoretically allows for the recovery of the pseudo-inverse solution in inconsistent settings.  
	
	To do this, we make two key observations. First, by \cref{lem:solution_in_krylov} in \cref{sec:appendix:results}, the Krylov subspace $\krylov{A}{b}{g}$ contains a normal solution. Denoting this normal solution as $ \f{x}^{\star} $, under \cref{assumpt:unlucky} and using \cref{prop:properties}-\labelcref{property:cgcr}, we can write 
	\begin{align}\label{eq:solution_in_krylov}
		\f{x}^{\star} = \sum_{k=1}^{g}\eta_{k-1}\f{p}_{k-1},
	\end{align}
	and the aim is thus to find the suitable scalars $\eta_{i}$'s. Next, if the vector $\f{b}^\perp$ can be somehow calculated as part of CG iterations, then it might be possible to eliminate its contribution in a suitable way to recover the pseudo-inverse solution. Fortunately, it turns out that $\f{b}^\perp$ is readily available at the last iteration of CG.
	
	\begin{lemma}\label{lem:pg_=_r*}
		Let $\f{b} \notin \Range(\f{A})$. Under \cref{assumpt:unlucky}, in \cref{alg:cg}, we have 
		\begin{align*}
			\f{p}_{g-1} = \frac{\|\f{p}_{g-1}\|^2}{\|\f{r}_{g-1}\|^2}\f{b}^\perp.
		\end{align*}
	\end{lemma}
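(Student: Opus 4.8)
The plan is to show that $\f{p}_{g-1}$ is both parallel to $\f{b}^\perp$ and has the claimed length. The natural starting point is \cref{thm:cg:Ap}, which tells us $\f{Ap}_{g-1} = \zero$, i.e.\ $\f{p}_{g-1} \in \Null(\f{A})$; together with \cref{def:solution}'s decomposition $\Rd = \Range(\f{A}) \oplus \Null(\f{A})$ (orthogonal, since $\f{A}$ is symmetric), this already locates $\f{p}_{g-1}$ in the one-relevant subspace where $\f{b}^\perp$ lives. So the real content is to prove that $\Null(\f{A})$ is effectively one-dimensional \emph{as seen from the Krylov subspace}, and to pin down the scalar.

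First I would decompose $\f{p}_{g-1} = \ft{p}_{g-1} + \f{p}_{g-1}^\perp$ with $\ft{p}_{g-1} \in \Range(\f{A})$ and $\f{p}_{g-1}^\perp \in \Null(\f{A})$. Applying $\f{A}$ and using $\f{Ap}_{g-1} = \zero$ forces $\f{A}\ft{p}_{g-1} = \zero$, but $\ft{p}_{g-1} \in \Range(\f{A})$ and $\f{A}$ is injective on $\Range(\f{A})$, so $\ft{p}_{g-1} = \zero$; hence $\f{p}_{g-1} = \f{p}_{g-1}^\perp \in \Null(\f{A})$. Next, I would do the same decomposition for each $\f{r}_k$ and each $\f{p}_k$ and exploit the recursions in \cref{alg:cg}: since $\f{Ap}_{k-1} \in \Range(\f{A})$ always, the relation $\f{r}_k = \f{r}_{k-1} - \alpha_{k-1}\f{Ap}_{k-1}$ shows the $\Null(\f{A})$-component of $\f{r}_k$ is constant in $k$, equal to that of $\f{r}_0 = \f{b}$, which is exactly $\f{b}^\perp$. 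The same telescoping through $\f{p}_k = \f{r}_k + \beta_{k-1}\f{p}_{k-1}$ then expresses the $\Null(\f{A})$-component of $\f{p}_{g-1}$ as a fixed multiple of $\f{b}^\perp$; combined with the first step ($\f{p}_{g-1}$ is entirely in $\Null(\f{A})$), this gives $\f{p}_{g-1} = \lambda \f{b}^\perp$ for some scalar $\lambda$.

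To identify $\lambda$, I would take the inner product of $\f{p}_{g-1} = \lambda \f{b}^\perp$ with $\f{r}_{g-1}$. On one hand, by \cref{prop:properties}-\labelcref{property:cg} we have $\lr{\f{p}_{g-1}, \f{r}_{g-1}} = \vnorm{\f{r}_{g-1}}^2$. On the other hand, $\lr{\lambda\f{b}^\perp, \f{r}_{g-1}} = \lambda \lr{\f{b}^\perp, \f{r}_{g-1}}$, and since $\f{r}_{g-1}$ has $\Null(\f{A})$-component exactly $\f{b}^\perp$ (from the previous paragraph) and $\f{b}^\perp \perp \Range(\f{A})$, this equals $\lambda \vnorm{\f{b}^\perp}^2$. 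Hence $\lambda = \vnorm{\f{r}_{g-1}}^2 / \vnorm{\f{b}^\perp}^2$. Finally, $\vnorm{\f{p}_{g-1}}^2 = \lambda^2 \vnorm{\f{b}^\perp}^2 = \vnorm{\f{r}_{g-1}}^4 / \vnorm{\f{b}^\perp}^2$, so $\lambda = \vnorm{\f{p}_{g-1}}^2/\vnorm{\f{r}_{g-1}}^2$, which is the stated formula. (That $\f{b}^\perp \neq \zero$ is guaranteed by $\f{b}\notin\Range(\f{A})$, so no division is illegitimate.)

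The main obstacle I anticipate is making the ``telescoping of $\Null(\f{A})$-components'' step fully rigorous: one must verify that the orthogonal projection onto $\Null(\f{A})$ commutes appropriately with all the recursions and that no $\alpha_{k-1}$ or $\beta_{k-1}$ vanishes before iteration $g-1$ (so that the expressions are well-defined), which is where \cref{assumpt:unlucky} and the established orthogonality properties in \cref{prop:properties} get used. An alternative, possibly slicker route that avoids tracking components for all $k$: argue directly that $\Range(\f{A}) \cap \krylov{A}{b}{g}$ has dimension $g-1$ and equals $\Span(\f{r}_0 - \f{b}^\perp, \dots)$, but the projection/telescoping argument above is the more elementary and self-contained one.
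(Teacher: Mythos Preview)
Your proposal is correct and follows essentially the same route as the paper: your ``telescoping of $\Null(\f{A})$-components'' is precisely the content of \cref{lem:p_in_null_b} (which the paper simply cites), and combining it with \cref{thm:cg:Ap} yields $\f{p}_{g-1} = \lambda\f{b}^\perp$ just as the paper does. The only cosmetic difference is in extracting the scalar: the paper pairs $\f{p}_{g-1}$ with $\f{b} = \f{r}_0$ (using $\f{p}_{g-1}\perp\ft{b}$ and $\lr{\f{r}_0,\f{p}_{g-1}} = \vnorm{\f{r}_{g-1}}^2$ from \cref{prop:properties}-\labelcref{property:cg}) to get $\lambda = \vnorm{\f{p}_{g-1}}^2/\vnorm{\f{r}_{g-1}}^2$ directly, whereas you pair with $\f{r}_{g-1}$ and pass through $\vnorm{\f{b}^\perp}^2$ as an intermediate before eliminating it---both arrive at the same place.
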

	\begin{proof}
		By \cref{lem:p_in_null_b} in \cref{sec:appendix:results}, we know that $(\f{I} - \f{A}^\dagger\f{A})\f{p}_{g-1} = a_{g-1}\f{b}^\perp$, which by \cref{lem:cg:Ap} gives $\f{p}_{g-1} =  a_{g-1}\f{b}^\perp$. Furthermore, \cref{lem:cg:Ap} gives $\lr{\f{p}_{g-1},\f{b}} = \lr{\f{p}_{g-1},\ft{b}} + \lr{\f{p}_{g-1},\f{b}^{\perp}} = \lr{\f{p}_{g-1},\f{b}^{\perp}} $. Hence, 
		\begin{align*}
			a_{g-1} = \frac{\vnorm{\f{p}_{g-1}}^{2}}{\lr{\f{p}_{g-1},\f{b}^\perp}} = \frac{\vnorm{\f{p}_{g-1}}^{2}}{\lr{\f{p}_{g-1},\f{b}}}.
		\end{align*}
		Finally, using \cref{prop:properties}-\labelcref{property:cg} and the definition of $\beta_{k-1}$, we get $\lr{\f{p}_{g-1}, \f{b}} = \|\f{r}_{g-1}\|^2$.
	\end{proof}
	We now express \cref{eq:solution_in_krylov} as 
	\begin{align}
		\label{eq:solution_in_krylov_gamma}
		\f{x}^{\star} = \sum_{k = 1}^{g} \eta_{k-1} \f{p}_{k-1} = \sum_{k = 1}^{g}(\alpha_{k-1} + \gamma_{k-1})\f{p}_{k-1},
	\end{align}
	where $\alpha_k$'s are from \cref{alg:cg} and $\gamma_k$'s are unknown constant that we aim to find. It follows that 
	\begin{align*}
		\frac{\|\f{r}_{g-1}\|^2}{\|\f{p}_{g-1}\|^2}\f{p}_{g-1} &= \f{b}^\perp = \f{b} - \f{A}\f{x}^{\star} = \f{b} - \sum_{k = 1}^{g}\alpha_{k-1}\f{Ap}_{k-1} - \sum_{k=1}^{g}\gamma_{k-1}\f{Ap}_{k-1} = \f{r}_{g-1} - \sum_{k = 1}^{g-1}\gamma_{k-1}\f{Ap}_{k-1},
	\end{align*}
	where the first equality uses \cref{lem:pg_=_r*}, and the fourth equality employs \cref{lem:cg:Ap} and the fact that in \cref{alg:cg}, $\f{r}_{k} = \f{r}_{k-1} - \alpha_{k-1}\f{Ap}_{k-1}$. Now, for each $1 \leq k \leq g-1$, we have
	\begin{align*}
		\frac{\|\f{r}_{g-1}\|^2}{\|\f{p}_{g-1}\|^2}\lr{\f{p}_{k-1}, \f{p}_{g-1}} & = \lr{\f{p}_{k-1}, \f{r}_{g-1}} - \gamma_{k-1}\lr{\f{p}_{k-1},\f{Ap}_{k-1}},
	\end{align*}
	which using \cref{prop:properties}-\labelcref{property:cg} gives
	\begin{align*}
		\gamma_{k-1} & = -\frac{\|\f{r}_{g-1}\|^2\lr{\f{p}_{k-1}, \f{p}_{g-1}}}{\|\f{p}_{g-1}\|^2\lr{\f{p}_{k-1},\f{Ap}_{k-1}}}.
	\end{align*}
	Furthermore, 
	\begin{align*}
		\lr{\f{p}_{k-1}, \f{p}_{g-1}} & = \lr{\f{p}_{k-1}, \f{r}_{g-1} + \beta_{g-2}\f{p}_{g-2}} = \beta_{g-2}\lr{\f{p}_{k-1}, \f{p}_{g-2}} = \beta_{g-2}\beta_{g-3}\lr{\f{p}_{k-1}, \f{p}_{g-3}} = \ldots\\
        & = \prod_{j=2}^{g-(k-1)}\beta_{g-j}\|\f{p}_{k-1}\|^2 = \frac{\|\f{r}_{g-1}\|^2}{\|\f{r}_{k-1}\|^2}\|\f{p}_{k-1}\|^2,
	\end{align*}
	which gives,
	\begin{align*}
		\gamma_{k-1} = -\frac{\|\f{r}_{g-1}\|^4\|\f{p}_{k-1}\|^2}{\|\f{p}_{g-1}\|^2\|\f{r}_{k-1}\|^2\lr{\f{p}_{k-1},\f{Ap}_{k-1}}},
	\end{align*}
	for $1 \leq k \leq g-1$. Using these scalars, coupled with the particular choice $\gamma_{g-1} = 0$, $ \f{x}^{\star} $ in \cref{eq:solution_in_krylov_gamma} now amounts to a normal solution. To find the pseudo-inverse solution, we note that, \cref{lem:p_in_null_b} implies 
	\begin{align*}
		(\f{I} - \f{A}^\dagger\f{A})\f{x}^{\star} = (\f{I} - \f{A}^\dagger\f{A})\sum_{k=1}^{g}\eta_{k-1}\f{p}_{k-1} = \left(\sum_{k=1}^{g}\eta_{k-1}a_{k-1}\right) \f{b}^{\perp} = c \f{p}_{g-1},
	\end{align*}
	where $c = \sum_{k=1}^{g}\eta_{k-1}a_{k-1}$. However, using \cref{lem:cg:Ap},
	\begin{align*}
		c = \frac{\lr{\f{p}_{g-1}, \f{x}^{\star}}}{\|\f{p}_{g-1}\|^2}. 
	\end{align*}
	Hence, $ (\f{I} - \f{A}^\dagger\f{A})\f{x}^{\star} = \lr{\f{p}_{g-1}, \f{x}^{\star}}\f{p}_{g-1}/\|\f{p}_{g-1}\|^2$, which finally gives
	\begin{align*}
		\f{x}^{\star} - \frac{\lr{\f{p}_{g-1}, \f{x}^{\star}}}{\|\f{p}_{g-1}\|^2}\f{p}_{g-1} & = \f{A}^\dagger\f{A}\f{x}^{\star} = \f{A}^\dagger\f{b}.
	\end{align*}
	Note that the left-hand side can also be regarded as the orthogonal projection of $\f{x}^{\star}$ onto the orthogonal complement of $\Range(\f{p}_{g-1})$. This leads us to \cref{alg:cg_pis,thm:cg:pseudo}.
	\begin{algorithm}[htbp]
		\caption{Conjugate Gradient with Pseudo-inverse Solution}
		\begin{algorithmic}[1]
			\Require $\f{A}$ and $\f{b}$
			\State Let $k = 1$, $\f{r}_0 = \f{p}_0 = \f{b}$
			\While{$\|\f{Ap}_{k-1}\| \neq 0$} \Comment{Termination Condition}
			\State $\alpha_{k-1} = \langle \f{r}_{k-1}, \f{r}_{k-1}\rangle/\lr{\f{p}_{k-1}, \f{Ap}_{k-1}}$ \Comment{Classical CG: Steps 3 - 7}
			\State $\f{x}_{k} = \f{x}_{k-1} + \alpha_{k-1}\f{p}_{k-1}$
			\State $\f{r}_{k} = \f{r}_{k-1} - \alpha_{k-1}\f{Ap}_{k-1}$
			\State $\beta_{k-1} = \lr{\f{r}_{k}, \f{r}_{k}}/\lr{\f{r}_{k-1}, \f{r}_{k-1}}$
			\State $\f{p}_{k} = \f{r}_{k} + \beta_{k-1}\f{p}_{k-1}$
			\State $\gamma_{k-1} = \lr{\f{p}_{k-1}, \f{p}_{k-1}} / (\lr{\f{r}_{k-1}, \f{r}_{k-1}} \lr{\f{p}_{k-1}, \f{Ap}_{k-1}})$ \Comment{Corrective Procedure: Steps 8 - 9}
			\State $\f{x}'_{k} = \f{x}_{k-1}' + \gamma_{k-1}\f{p}_{k-1}$
			\State $k = k + 1$
			\EndWhile
			\If{$\|\f{p}_{k-1}\| \neq \f{0}$} \Comment{i.e., Inconsistent System}
			\State $\f{x}^{\star} = \f{x}_{k-1} - (\|\f{r}_{k-1}\|^4 / \|\f{p}_{k-1}\|^2) \f{x}_{k-1}'$ \Comment{Normal Solution}
			\State $\f{x}^{+} = \f{x}^{\star} - (\lr{\f{p}_{k-1}, \f{x}^{\star}} / \|\f{p}_{k-1}\|^2) \f{p}_{k-1}$ \Comment{Pseudo-inverse Solution}
			\Else \Comment{i.e., Consistent System}
			\State $\f{x}^{+} = \f{x}_{k-1}$
			\EndIf
			\State \Return $\f{x}^{+}$
		\end{algorithmic}\label{alg:cg_pis}
	\end{algorithm}
	\begin{theorem}[Pseudo-inverse Solution with \cref{alg:cg_pis}]
		\label{thm:cg:pseudo}
		Under \cref{assumpt:unlucky}, in \cref{alg:cg_pis} we have $ \f{x}^{+} = \f{A}^{\dagger}\f{b} $.
	\end{theorem}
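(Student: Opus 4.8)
The statement is essentially a bookkeeping claim: the plan is to show that \cref{alg:cg_pis} faithfully realizes the construction carried out in the discussion preceding it, and then to invoke the identities derived there. I would split the argument according to the branch taken after the \textbf{while}-loop.

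First I would describe what the loop produces. Under \cref{assumpt:unlucky} no unlucky breakdown occurs, so Lines~3--7 of \cref{alg:cg_pis} generate exactly the quantities $\f{x}_k,\f{r}_k,\f{p}_k,\alpha_{k-1},\beta_{k-1}$ of \cref{alg:cg}, while Lines~8--9 additionally accumulate the scalars $\gamma_{k-1}=\|\f{p}_{k-1}\|^2/(\|\f{r}_{k-1}\|^2\lr{\f{p}_{k-1},\f{Ap}_{k-1}})$ and the vectors $\f{x}'_k=\sum_{j=1}^{k}\gamma_{j-1}\f{p}_{j-1}$. The stopping test $\|\f{Ap}_{k-1}\|=0$ first fires at $k=g$ when $\f{b}\notin\Range(\f{A})$, by \cref{thm:cg:Ap}; it first fires at $k=g+1$ when $\f{b}\in\Range(\f{A})$, since then $\f{p}_g=\zero$ while $\f{p}_j\neq\zero$ for $j<g$ by \cref{lem:pg=0_consistent}, and $\f{Ap}_j\neq\zero$ for such $j$ because $\f{p}_j\in\Range(\f{A})$ meets $\Null(\f{A})$ only at $\zero$ (symmetric $\f{A}$). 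Hence on exit $\f{x}_{k-1}=\sum_{j=1}^{k-1}\alpha_{j-1}\f{p}_{j-1}$ and $\f{x}'_{k-1}=\sum_{j=1}^{k-1}\gamma_{j-1}\f{p}_{j-1}$, with $k-1=g$ in the consistent case and $k-1=g-1$ in the inconsistent case.

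In the consistent case $\f{p}_{k-1}=\f{p}_g=\zero$, so the \textbf{else} branch returns $\f{x}^{+}=\f{x}_g=\f{A}^{\dagger}\f{b}$ by \cref{prop:convergence_consistent}. In the inconsistent case \cref{thm:cg:Ap} guarantees $\f{p}_{g-1}\neq\zero$, so the \textbf{if} branch is taken. Substituting the partial sums into the assignment to $\f{x}^{\star}$ gives $\f{x}^{\star}=\sum_{k=1}^{g-1}\big(\alpha_{k-1}-\|\f{r}_{g-1}\|^4\|\f{p}_{k-1}\|^2/(\|\f{p}_{g-1}\|^2\|\f{r}_{k-1}\|^2\lr{\f{p}_{k-1},\f{Ap}_{k-1}})\big)\f{p}_{k-1}$, which is exactly \cref{eq:solution_in_krylov_gamma} with the coefficients $\gamma_{k-1}$ computed there and with the coefficient of $\f{p}_{g-1}$ set to $0$; this last choice is immaterial since $\f{p}_{g-1}\in\Null(\f{A})$ by \cref{thm:cg:Ap}, so it does not affect $\f{A}\f{x}^{\star}$. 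By the computation preceding the algorithm the $\gamma_{k-1}$ are chosen precisely so that $\f{b}-\f{A}\f{x}^{\star}=\f{r}_{g-1}-\sum_{k=1}^{g-1}\gamma_{k-1}\f{Ap}_{k-1}=(\|\f{r}_{g-1}\|^2/\|\f{p}_{g-1}\|^2)\f{p}_{g-1}$, which equals $\f{b}^{\perp}$ by \cref{lem:pg_=_r*}; hence $\f{x}^{\star}$ is a normal solution and $\f{A}^{\dagger}\f{A}\f{x}^{\star}=\f{A}^{\dagger}(\f{b}-\f{b}^{\perp})=\f{A}^{\dagger}\f{b}$. Finally, \cref{lem:p_in_null_b} together with \cref{lem:pg_=_r*} gives $(\f{I}-\f{A}^{\dagger}\f{A})\f{x}^{\star}=(\lr{\f{p}_{g-1},\f{x}^{\star}}/\|\f{p}_{g-1}\|^2)\f{p}_{g-1}$, so the assignment to $\f{x}^{+}$ returns $\f{x}^{\star}-(\f{I}-\f{A}^{\dagger}\f{A})\f{x}^{\star}=\f{A}^{\dagger}\f{A}\f{x}^{\star}=\f{A}^{\dagger}\f{b}$. (The boundary case $g=1$, i.e.\ $\f{b}\in\Null(\f{A})$, is subsumed with empty sums, giving $\f{x}^{\star}=\f{x}^{+}=\zero=\f{A}^{\dagger}\f{b}$.)

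The main obstacle is the inconsistent-case bookkeeping: I must check that the incrementally formed $\gamma_{k-1}$, which deliberately omit the global factor $\|\f{r}_{g-1}\|^4/\|\f{p}_{g-1}\|^2$ (not available before termination), combined with the single rescaling in the $\f{x}^{\star}$ step, reproduce exactly the closed-form coefficients of \cref{eq:solution_in_krylov_gamma}, and that the absence of an $\alpha_{g-1}\f{p}_{g-1}$ term --- with $\alpha_{g-1}$ ill-defined because $\f{Ap}_{g-1}=\zero$ --- is harmless, precisely because $\f{p}_{g-1}\in\Null(\f{A})$ and is in any case annihilated by the final orthogonal projection onto the orthogonal complement of $\Range(\f{p}_{g-1})$. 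Everything else is a direct appeal to \cref{thm:cg:Ap,lem:pg_=_r*,lem:p_in_null_b,lem:pg=0_consistent,prop:convergence_consistent}.
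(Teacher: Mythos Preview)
Your proposal is correct and follows essentially the paper's own approach: the theorem is established by the derivation immediately preceding \cref{alg:cg_pis}, and you faithfully verify that the algorithm implements that derivation, treating the two branches, the deferred global factor $\|\f{r}_{g-1}\|^4/\|\f{p}_{g-1}\|^2$ in the $\gamma_{k-1}$'s, and the consistent case via \cref{prop:convergence_consistent}. If anything, your handling of the missing $\alpha_{g-1}\f{p}_{g-1}$ term is slightly more explicit than the paper, which writes \cref{eq:solution_in_krylov_gamma} with an index running to $g$ even though $\alpha_{g-1}$ is undefined; your observation that this term is irrelevant both because $\f{p}_{g-1}\in\Null(\f{A})$ and because it is removed by the final projection is the right way to close that loose end.
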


    \begin{remark}
        We highlight that while, theoretically, \cref{alg:cg_pis} provides a means to extract the pseudo-inverse solution at termination, the extreme numerical instability of CG for inconsistent systems causes the theory to fall short of practical observations. The instability of CG under inconsistent systems is widely observed in many applications and also documented in the works \cite{kaasschieter1988preconditioned,rebjock2023fast}; see also the experiments in  \cref{sec:num_exp}. 
    \end{remark}
	
	\subsection{Conjugate Residual}\label{sec:cr}
    In this section, we show that under \cref{assumpt:unlucky}, CR is able to converge to a normal solution for inconsistent systems (\cref{thm:cr:Ar}). In doing so, we will also demonstrate the similarities between CR and CG, while highlighting additional advantages of CR. Similar to CG, we will then introduce a few additional steps within CR to recover the pseudo-inverse solution (\cref{thm:cr:pseudo}). Finally, we will show that, as long as an unlucky breakdown has not occurred, CR generates iterates that are identical to those of MINRES for any symmetric matrix $\f{A}$ and any vector $\f{b}$ (\cref{thm:mr=cr}).

    \subsubsection{Convergence Behavior}
    \label{sec:cr:convergence_behaviour}
    
    The convergent behavior of CR under various conditions have been extensively studied (see \cref{table:cr_bahaviours}). Originally developed by Hestenes and Stiefel \cite{hestenes1952methods}, CR was designed for positive definite matrices and can be straightforwardly applied to negative definite matrices. Luenberger \cite{luenberger1970conjugate} later extended CR to indefinite, albeit non-singular, matrices. For inconsistent systems, Hayami and Sugihara \cite{hayami2001behaviour,hayami2011geometric,hayami2004convergence} established that for inconsistent systems, the necessary and sufficient condition for CR to avoid unlucky breakdown is that the matrix $\f{A}$ is positive semi-definite. In particular, they demonstrated that for any indefinite $\f{A}$, there exists a right-hand side vector $\f{b}$ that will trigger an unlucky breakdown. While this is a crucial insight, it remains of interest to investigate the behavior of CR for the many other vectors $\f{b}$ that do not cause this breakdown. We do that here. Specifically, we show that for any symmetric matrix $\f{A}$ and any vector $\f{b}$, if no unlucky breakdown occurs, CR will indeed converge to a normal solution. Furthermore, we devise a few extra steps for CR to obtain the pseudo-inverse solution. 
    
    \begin{table}[htbp]
	\centering
	\begin{tabular}{|c|c|c|}
		\hline
		& \multirow{2}{*}{Consistent: $\f{b} \in \Range(\f{A})$} & \multirow{2}{*}{Inconsistent: $\f{b} \notin \Range(\f{A})$}\\
        [8pt]
        \hline
    	\multirow{2}{*}{(Semi-)Definite} & \multirow{2}{*}{Hestenes \& Stiefel \cite{hestenes1952methods}} & \multirow{2}{*}{Hayami \& Sugihara \cite{hayami2001behaviour,hayami2011geometric,hayami2004convergence}}\\
        [8pt]
        \hline
		\multirow{2}{*}{Indefinite} & \multirow{2}{*}{Luenberger \cite{luenberger1970conjugate} \& Greenbaum \cite{greenbaum1997iterative}} & \multirow{2}{*}{\cref{thm:cr:Ar,thm:cr:pseudo}}\\
        [8pt]
        \hline
	\end{tabular}
    \caption{Relevant studies on the convergent behavior of CR under various conditions.}\label{table:cr_bahaviours}
    \end{table}

    \begin{remark}
    	\label{rm:MRII}
        There are many algorithms considering the alternative Krylov subspaces, like $\krylov{A}{Ab}{}$ or $\krylov{A^\top A}{Ab}{}$, instead of $\krylov{A}{b}{}$, for example MR-II / RRMR \cite{hanke2017conjugate} and variants of CG and CR \cite{calvetti1994conjugate,fischer1996note}. The advantages of these alternative Krylov subspaces are that the vector $\f{Ab}$ is in the range of $\f{A}$ and, hence, avoids unnecessary complications arising from the inconsistent systems. However, it is also empirically shown that these alternative subspaces may not produce better results than $\krylov{A}{b}{}$ and has more computational costs, see the work \cite{calvetti2001choice} discussing MINRES and RRMR. It is also shown that the iterates of Krylov subspace methods defined in $\krylov{A}{Ab}{}$ tend to have a slower convergence than $\krylov{A}{b}{}$, see the work\cite{estrin2019euclidean} comparing SYMMLQ and CG.
    \end{remark}
    
    One of the similarities between CR and CG is that, as in \cref{lem:cg:Ap} for CG, at termination for CR we also have $\f{p}_{g-1} \in \Null(\f{A})$.
	\begin{lemma}\label{lem:cr:Ap}
		Let $\f{b} \notin \Range(\f{A})$. Under \cref{assumpt:unlucky}, in \cref{alg:cr}, we have $\f{A p}_{k-1} = \f{0}$ if and only if $k = g$. Furthermore, $ \f{p}_{g-1} \neq \zero$.
	\end{lemma}
	\begin{proof}
		The proof follows a similar line of reasoning as that of \cref{lem:cg:Ap}. Let $ k = g $. If $g=1$, since $ \f{b} \notin \Range(\f{A}) $, we must have $\f{p}_{0} = \f{b} \in \Null(\f{A})$ and the claim holds. So consider $g > 1$. By \cref{prop:properties}-\labelcref{property:cgcr} and \cref{def:grades}, we note that $\f{Ap}_{g-1} \in \krylov{A}{b}{g}$ and
		\begin{align*}
			\f{Ap}_{g-1} & = \sum_{j=1}^{g} c_{j-1} \f{r}_{j-1}.
		\end{align*}
		In addition, from \cref{prop:properties}-\labelcref{property:cr}, we have $\lr{\f{A}\f{p}_{i-1}, \f{Ap}_{g - 1}} = 0$, for all $1 \leq i < g - 1$. For $i=1$, $0 = \lr{\f{A}\f{p}_0, \f{Ap}_{g - 1}} = c_0 \lr{\f{A}\f{p}_0, \f{r}_0} = c_0 \lr{\f{A}\f{r}_0, \f{r}_0}$.
		We note that $\lr{\f{A}\f{r}_0, \f{r}_0} \neq 0$ by \cref{assumpt:unlucky}. Hence, we must have $c_0 = 0$. Similarly, for $2 \leq i < g - 1$, 
		\begin{align*}
			0 &= \lr{\f{A} \f{p}_{i-1}, \f{Ap}_{g-1}} = \lr{\f{A}\f{r}_{i-1} + \beta_{i-2}\f{A}\f{p}_{i-2}, \f{Ap}_{g-1}} = \lr{\f{A}\f{r}_{i-1}, \f{Ap}_{g-1}}\\
			&= \sum_{j=1}^{g} c_{j-1} \lr{\f{A}\f{r}_{i-1}, \f{r}_{j-1}} = c_{i-1} \lr{\f{A}\f{r}_{i-1}, \f{r}_{i-1}},
		\end{align*}
		where the last equality follows from \cref{prop:properties}-\labelcref{property:cr}. Again, since $\lr{\f{A}\f{r}_{i-1}, \f{r}_{i-1}} \neq 0$, by \cref{assumpt:unlucky}, it follows that $c_{i-1} = 0$. As a result, we must have got $\f{Ap}_{g-1} = c_{g-1}\f{r}_{g-1}$. Now, if $c_{g-1} = 0$, then we are done. Otherwise, assuming $\f{Ap}_{g-1} \not= \zero$, since $\f{r}_{g-1} \neq \zero$, the constant $\alpha_{g-1} = \lr{\f{r}_{g-1},\f{A}\f{r}_{g-1}} / \lr{\f{A}\f{p}_{g-1},\f{Ap}_{g-1}}$ can be, in principle, calculated in the next iterate. We first note that by \cref{prop:properties}-\labelcref{property:cr}, we have 
		\begin{align*}
			c_{g-1} &= \frac{\lr{\f{A}\f{p}_{g-1},\f{Ap}_{g-1}}}{\lr{\f{A}\f{p}_{g-1},\f{r}_{g-1}}} = \frac{\lr{\f{A}\f{p}_{g-1},\f{Ap}_{g-1}}}{\lr{\f{A}\f{r}_{g-1} + \beta_{g-2}\f{A}\f{p}_{g-2},\f{r}_{g-1}}}\\ 
            &= \frac{\lr{\f{A}\f{p}_{g-1},\f{Ap}_{g-1}}}{\lr{\f{r}_{g-1},\f{Ar}_{g-1}}} = \frac{1}{\alpha_{g-1}}.
		\end{align*}
		It follows that $\f{r}_{g} = \f{r}_{g-1} - \alpha_{g-1}\f{Ap}_{g-1} = \f{r}_{g-1} - \alpha_{g-1}c_{g-1}\f{r}_{g-1} = \zero$,
		which is impossible since $\f{b} \notin \Range(\f{A})$. By reductio ad absurdum, this implies $\f{Ap}_{g-1} = \zero$.
		
		\bigskip 
		\noindent
		Conversely, let $ 1 \leq k \leq g $ be the first iteration where $\f{Ap}_{k-1} = \f{0}$. We first show that $\f{p}_{k-1} \neq \f{0}$. If $ k=1 $, we have $ \f{p}_{0} = \f{b} \neq \zero $. Suppose $ k \geq 2 $ and $\f{p}_{k-1} = \f{0}$. Clearly, $ \beta_{k-2} \neq 0 $ as otherwise $ \zero = \f{p}_{k-1} = \f{r}_{k-1} + \beta_{k-2}\f{p}_{k-2} $ also implies $\f{r}_{k-1}= \zero$ which by assumption on $ \f{b} $ is impossible. However, by \cref{prop:properties}-\labelcref{property:cgcr}, $\f{r}_{k-1}$ and $\f{p}_{k-2}$ must be linearly independent and we arrive at a contradiction. As a result, since $\f{p}_{k-1} \in \krylov{A}{b}{k}$ and $\f{p}_{k-1} \neq \f{0}$, we can write $ \f{p}_{k-1}  = \sum_{i=0}^{k-1} c_{i}\f{A}^{i}\f{b} $ and at least one $c_{j}$ must be non-zero. Hence, $\f{0} = \f{Ap}_{k-1}  = \sum_{i=0}^{k-1} c_{i}\f{A}^{i+1}\f{b}$, which implies $\dim(\krylov{A}{b}{k+1}) = \dim(\krylov{A}{b}{k})$, and so we must have $k = g$.
	\end{proof}

	While \cref{lem:cg:Ap,lem:cr:Ap} highlight a similarity between CG and CR, \cref{thm:cg:Ar,thm:cr:Ar} showcase a drastic difference. Specifically, we now show that, in sharp contrast to CG, CR always recovers a normal solution. 
	
	\begin{theorem}[CR's Ability to Recover Normal Solution]
		\label{thm:cr:Ar}
		Let $\f{b} \notin \Range(\f{A})$. Under \cref{assumpt:unlucky}, in \cref{alg:cr} we have $\f{A r}_{k-1} = \f{0}$ if and only if $k = g$.
	\end{theorem}
	\begin{proof}
		Suppose $ k = g $. From $ \f{p}_{g-1}  = \f{r}_{g-1} + \beta_{g-2}\f{p}_{g-2} $ and \cref{lem:cr:Ap}, we have $ \lr{\f{r}_{g-1}, \f{Ar}_{g-1}} + \beta_{g-2}\lr{\f{r}_{g-1}, \f{Ap}_{g-2}} = 0$, which using \cref{prop:properties}-\labelcref{property:cr} gives $ \lr{\f{r}_{g-1}, \f{Ar}_{g-1}}  = 0$, implying $ \beta_{g-2} = 0 $ by its definition in \cref{alg:cr}. Hence, $ \f{p}_{g-1}  = \f{r}_{g-1}  $ and the result follows from \cref{lem:cr:Ap}. 
		
		\bigskip 
		\noindent
		Conversely, suppose $\f{Ar}_{k-1} = \f{0}$ for some iteration $1 \leq k \leq g$. From $ \f{p}_{k-1}  = \f{r}_{k-1} + \beta_{k-2}\f{p}_{k-2} $, we get $\f{Ap}_{k-1}  = \beta_{k-2}\f{Ap}_{k-2}$, which using  \cref{prop:properties}-\labelcref{property:cr} gives $ \|\f{Ap}_{k-1}\|^2  = \beta_{k-2}\lr{\f{Ap}_{k-1}, \f{Ap}_{k-2}} = 0 $. Hence, $\f{Ap}_{k-1} = \f{0}$, which by \cref{lem:cr:Ap} can only happen when $k = g$.
	\end{proof}
	
	\begin{remark}
		For both \cref{alg:cg,alg:cr}, we always have $ \f{p}_{0} = \f{r}_{0} $. However, from the proof of \cref{thm:cr:Ar}, it can be seen that under the assumptions of \cref{thm:cr:Ar}, in \cref{alg:cr}, we also have $ \f{p}_{g-1} = \f{r}_{g-1} $. It is important to note that this property is specific to CR, and CG does not exhibit the same characteristics.
	\end{remark}
	
	While, by \cref{thm:cr:Ar}, CR is guaranteed to converge to a normal solution, we next show that we can in fact obtain the pseudo-inverse solution from the last iterate of CR through a straightforward procedure. This stands in sharp contrast to CG, where a similar procedure requires non-trivial modifications to the original algorithm. To achieve this, we first establish several technical lemmas.
	
	\begin{lemma}\label{lem:cr_tilde}
		Consider the iterates generated by the procedures CR$(\f{A}, \ft{b})$ and CR$(\f{A}, \f{b})$, and let where $g$ be the grade of $\f{b}$ with respect to $\f{A}$. For $1 \leq k \leq g$, we have  $\tilde{\alpha}_{k-1} = \alpha_{k-1}$, $\tilde{\beta}_{k-1} = \beta_{k-1}$, and $\ft{x}_{k-1} = \f{A}^\dagger\f{Ax}_{k-1}$.
	\end{lemma}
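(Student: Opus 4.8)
The plan is to prove the claim by a single induction on $k$ that couples the two runs CR$(\f{A},\f{b})$ and CR$(\f{A},\ft{b})$, exploiting two elementary facts about the decomposition \cref{eq:orth_decomp}: since $\f{A}$ is symmetric, $\f{A}^{\dagger}\f{A}$ is the orthogonal projector onto $\Range(\f{A})$ and $\f{I}-\f{A}^{\dagger}\f{A}$ the orthogonal projector onto $\Null(\f{A})$; and, because $\f{b}^{\perp}\in\Null(\f{A})$, we have $\f{A}^{i}\f{b}=\f{A}^{i}\ft{b}$ for every $i\geq 1$. The guiding picture is that the inconsistent run should differ from the consistent one only through a \emph{frozen} $\Null(\f{A})$-offset, namely $\f{b}^{\perp}$ inside the residuals and a (time-varying) $\Null(\f{A})$-vector inside the directions.

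Concretely, I will show by induction that for every $0\leq j\leq g-1$: (i) $\f{r}_{j}-\ft{r}_{j}=\f{b}^{\perp}$; (ii) $\f{p}_{j}-\ft{p}_{j}\in\Null(\f{A})$; (iii) $\tilde\alpha_{j}=\alpha_{j}$ and $\tilde\beta_{j}=\beta_{j}$; and (iv) $\ft{x}_{j}=\f{A}^{\dagger}\f{A}\f{x}_{j}$. The base case $j=0$ is immediate from the initialization $\f{x}_{0}=\ft{x}_{0}=\f{0}$ and $\f{r}_{0}=\f{p}_{0}=\f{b}=\ft{b}+\f{b}^{\perp}$, $\ft{r}_{0}=\ft{p}_{0}=\ft{b}$. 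I also record, once, that \cref{prop:properties}-\labelcref{property:cgcr} applied to the consistent run gives $\ft{r}_{j},\ft{p}_{j}\in\Span(\ft{b},\f{A}\ft{b},\dots,\f{A}^{j}\ft{b})\subseteq\Range(\f{A})$; together with (i)--(ii) this upgrades those statements to $\f{A}^{\dagger}\f{A}\f{r}_{j}=\ft{r}_{j}$, $\f{A}^{\dagger}\f{A}\f{p}_{j}=\ft{p}_{j}$, and $\f{A}\f{p}_{j}=\f{A}\ft{p}_{j}$.

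The inductive step rests on the fact that the scalars in CR are $\f{A}$-weighted and hence blind to $\Null(\f{A})$-components: if $\f{u}-\f{v}\in\Null(\f{A})$ then $\f{A}\f{u}=\f{A}\f{v}\in\Range(\f{A})$ is orthogonal to $\Null(\f{A})$, so $\lr{\f{u},\f{A}\f{u}}=\lr{\f{v},\f{A}\f{v}}$. Thus the numerator $\|\f{r}_{j}\|_{\f{A}}^{2}=\lr{\f{r}_{j},\f{A}\f{r}_{j}}$ of $\alpha_{j}$ equals $\lr{\ft{r}_{j},\f{A}\ft{r}_{j}}$, while its denominator equals $\|\f{A}\f{p}_{j}\|^{2}=\|\f{A}\ft{p}_{j}\|^{2}$, giving $\alpha_{j}=\tilde\alpha_{j}$. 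Then $\f{r}_{j+1}-\ft{r}_{j+1}=(\f{r}_{j}-\ft{r}_{j})-\alpha_{j}(\f{A}\f{p}_{j}-\f{A}\ft{p}_{j})=\f{b}^{\perp}$, which is (i) at $j+1$; feeding it back into the same $\f{A}$-weighted computation gives $\beta_{j}=\tilde\beta_{j}$; applying $\f{A}^{\dagger}\f{A}$ to $\f{x}_{j+1}=\f{x}_{j}+\alpha_{j}\f{p}_{j}$ and using $\f{A}^{\dagger}\f{A}\f{p}_{j}=\ft{p}_{j}$ yields (iv); and $\f{p}_{j+1}-\ft{p}_{j+1}=(\f{r}_{j+1}-\ft{r}_{j+1})+\beta_{j}(\f{p}_{j}-\ft{p}_{j})$, a sum of two $\Null(\f{A})$-vectors, yields (ii). This closes the induction for $0\leq j\leq g-1$.

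The one place to be careful is the endpoint $k=g$: by \cref{thm:cr:Ap,thm:cr:Ar} we have $\f{A}\f{p}_{g-1}=\f{0}$ and $\f{A}\f{r}_{g-1}=\f{0}$ (and, since the grade of $\ft{b}$ is $g-1$, the analogous identities hold for the consistent run at its last step), so $\alpha_{g-1}$ and $\beta_{g-1}$ are both of the indeterminate form $0/0$ and the equalities $\tilde\alpha_{g-1}=\alpha_{g-1}$, $\tilde\beta_{g-1}=\beta_{g-1}$ should be read in that degenerate sense; the substantive claim $\ft{x}_{g-1}=\f{A}^{\dagger}\f{A}\f{x}_{g-1}$ at $k=g$ already follows from the step producing index $g-1$. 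As elsewhere in the paper, \cref{assumpt:unlucky} is used only to guarantee that both runs reach index $g-1$ without an unlucky breakdown. I do not expect a genuinely hard step here — this is a bookkeeping induction — the main obstacle is just stating the $k=g$ endpoint convention correctly, and the identity $\f{r}_{j}-\ft{r}_{j}=\f{b}^{\perp}$ is the linchpin that keeps the argument short.
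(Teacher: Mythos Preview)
Your proposal is correct and takes essentially the same inductive approach as the paper: both track that the tilded quantities are the $\Range(\f{A})$-projections of the untilded ones (your phrasing ``differences lie in $\Null(\f{A})$'' is equivalent, given that $\ft{r}_{j},\ft{p}_{j}\in\Range(\f{A})$), and both exploit that CR's $\f{A}$-weighted scalars ignore $\Null(\f{A})$-components. Your extra care about the $k=g$ endpoint, where $\alpha_{g-1}$ and $\beta_{g-1}$ are formally $0/0$, is a worthwhile clarification that the paper's proof glosses over; the substantive claim $\ft{x}_{g-1}=\f{A}^{\dagger}\f{A}\f{x}_{g-1}$ is indeed already secured by the induction up to index $g-1$.
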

	\begin{proof}
		We first note that, for any $ \f{x} $, we have $(\f{I} - \f{A}^\dagger\f{A})\f{r} = \f{b}^{\perp}$. To show the claim, we also need to show that $ \ft{p}_{k-1} = \f{A}^\dagger\f{A}\f{p}_{k-1} $, and $\ft{r}_{k-1} = \f{A}^\dagger\f{A}\f{r}_{k-1}$.  The proof is done by induction. For $ k = 1 $, we have
		\[\f{A}^\dagger\f{A}\f{r}_0 = \f{A}^\dagger\f{A}(\ft{b} + \f{b}^\perp) = \ft{b} = \ft{r}_0,\]
		\[\f{A}^\dagger\f{A}\f{p}_0 = \f{A}^\dagger\f{A}(\ft{b} + \f{b}^\perp) = \ft{b} = \ft{p}_0,\]
		and
		\[\alpha_0 = \frac{\lr{\f{r}_0, \f{A}\f{r}_0}}{\|\f{A}\f{p}_0\|^2} = \frac{\lr{(\ft{b} + \f{b}^{\perp}), \f{A}(\ft{b} + \f{b}^{\perp})}}{\|\f{A}(\ft{b} + \f{b}^{\perp})\|^2} = \frac{\lr{\ft{b}, \f{A}\ft{b}}}{\|\f{A}\ft{b}\|^2} = \tilde{\alpha}_0.\]
		Furthermore, $\f{A}^\dagger\f{A}\f{r}_1 = \f{A}^\dagger\f{A}(\f{r}_0 - \alpha_0\f{A}\f{p}_0) = \ft{r}_0 - \tilde{\alpha}_0\f{A}\ft{p}_0 = \ft{r}_1$,
		where we have used the fact $\f{A}^\dagger\f{A} = \f{AA}^\dagger$. So, 
		\[\beta_0 = \frac{\lr{\f{r}_1, \f{A}\f{r}_1}}{\lr{\f{r}_0, \f{A}\f{r}_0}} = \frac{\lr{(\ft{r}_1 + \f{b}^\perp), \f{A}(\ft{r}_1 + \f{b}^\perp)}}{\lr{(\ft{b} + \f{b}^\perp), \f{A}(\ft{b} + \f{b}^\perp)}} = \frac{\lr{\ft{r}_1 \f{A}\ft{r}_1}}{\lr{\ft{b}, \f{A}\ft{b}}} = \tilde{\beta}_0.\]
		Next, assume the claim holds for all $k-1$. It follows that
		\begin{align*}
			\f{A}^\dagger\f{A}\f{r}_{k} & = \f{A}^\dagger\f{A}(\f{r}_{k-1} - \alpha_{k-1}\f{A}\f{p}_{k-1}) = \ft{r}_{k-1} - \tilde{\alpha}_{k-1}\f{A}\ft{p}_{k-1} = \ft{r}_{k},\\
			\f{A}^\dagger\f{A}\f{p}_{k} & = \f{A}^\dagger\f{A}(\f{r}_{k} + \beta_{k-1}\f{p}_{k-1}) = \ft{r}_{k} + \tilde{\beta}_{k-1}\ft{p}_{k-1} = \ft{p}_{k}, \\
			\alpha_{k} & = \frac{\lr{\f{r}_{k},\f{A}\f{r}_{k}}}{\|\f{A}\f{p}_{k}\|^2} = \frac{\lr{\ft{r}_{k},\f{A}\ft{r}_{k}}}{\|\f{A}\ft{p}_{k}\|^2} = \tilde{\alpha}_{k}.
		\end{align*}
		Also, since $\f{A}^\dagger\f{A}\f{r}_{k+1} = \f{A}^\dagger\f{A}(\f{r}_{k} - \alpha_{k}\f{A}\f{p}_{k}) = \ft{r}_{k} - \tilde{\alpha}_{k}\f{A}\ft{p}_{k} = \ft{r}_{k+1}$,
		we obtain
		\begin{align*}
			\beta_{k} & = \frac{\lr{\f{r}_{k+1}, \f{A}\f{r}_{k+1}}}{\lr{\f{r}_k, \f{A}\f{r}_k}} = \frac{\lr{\ft{r}_{k+1}, \f{A}\ft{r}_{k+1}}}{\lr{\ft{r}_k, \f{A}\ft{r}_k}} = \tilde{\beta}_{k}.
		\end{align*}
		Finally, for any $1 \leq k \leq g$, we have
		\begin{align*}
			\f{x}_{k} = \sum_{i=1}^{k} \alpha_{i-1}\f{p}_{i-1},
		\end{align*}
		which implies
		\begin{align*}
			\f{A}^\dagger\f{A}\f{x}_{k} = \f{A}^\dagger\f{A}\sum_{i=1}^{k} \alpha_{i-1}\f{p}_{i-1} = \sum_{i=1}^{k} \tilde{\alpha}_{i-1}\ft{p}_{i-1} = \ft{x}_{k}.
		\end{align*}
	\end{proof}
	We note that, as alluded to in \cref{rem:grade}, both procedures CR$(\f{A}, \ft{b})$ and CR$(\f{A}, \f{b})$ undergo precisely the same number of iterations. Hence, the assertion of \cref{lem:cr_tilde} holds true for all iterations in both procedures. 
	
	Since $ \ft{b} \in \Range(\f{A}) $, by \cref{fact:convergence_consistent}, for the procedure CR($\f{A}, \ft{b}$), we have $\f{A}^\dagger\f{A}\f{x}_{g - 1} =  \ft{x}_{g - 1} = \ft{x}_{\tilde{g}} = \f{A}^{\dagger} \ft{b} = \f{A}^{\dagger} \f{b} = \f{x}^{+}$.
	Now, using a similar line of reasoning as that leading up to \cref{thm:cg:pseudo} for CG, by utilizing \cref{lem:p_in_null_b}, for some constant $ c $, we have
	\begin{align*}
		(\f{I} - \f{A}^\dagger\f{A})\f{x}_{g-1} & = (\f{I} - \f{A}^\dagger\f{A})\sum_{k=1}^{g}\alpha_{k-1}\f{p}_{k-1} = c\f{p}_{g-1},
	\end{align*}
	which using \cref{lem:cr:Ap} gives $c = \lr{\f{p}_{g-1}, \f{x}_{g-1}}/\|\f{p}_{g-1}\|^2$.
    Hence, 
	\begin{align*}
		\f{x}_{g-1} - \frac{\lr{\f{p}_{g-1}, \f{x}_{g-1}}}{\|\f{p}_{g-1}\|^2}\f{p}_{g-1} & = \f{A}^\dagger\f{A}\f{x}_{g-1} = \f{x}^{+},
	\end{align*}
	which leads us to \cref{thm:cr:pseudo,alg:cr_pis}.
	\begin{algorithm}[htbp]
		\caption{Conjugate Residual with Pseudo-inverse Solution}\label{alg:cr_pis}
		\begin{algorithmic}[1]
			\Require a symmetric matrix $\f{A}$ and a vector $\f{b}$
			\State Let $k = 1$, $\f{r}_0 = \f{p}_0 = \f{b}$
			\While{$\|\f{Ar}_{k-1}\| \not = 0$} \Comment{Termination Condition}
			\State $\alpha_{k-1} = \lr{\f{r}_{k-1}, \f{A}\f{r}_{k-1}}/\|\f{A}\f{p}_{k-1}\|^2$ \Comment{Classical CR: Steps 3 - 7}
			\State $\f{x}_{k} = \f{x}_{k-1} + \alpha_{k-1}\f{p}_{k-1}$
			\State $\f{r}_{k} = \f{r}_{k-1} - \alpha_{k-1}\f{Ap}_{k-1}$
			\State $\beta_{k-1} = \lr{\f{r}_{k}, \f{A}\f{r}_{k}}/\lr{\f{r}_{k-1}, \f{A}\f{r}_{k-1}}$
			\State $\f{p}_{k} = \f{r}_{k} + \beta_{k-1}\f{p}_{k-1}$
			\State $k = k + 1$
			\EndWhile
			\State $\f{x}^{\star} = \f{x}_{k-1}$ \Comment{Normal Solution}
			\State $\f{x}^{+} = \f{x}_{k-1} - (\lr{\f{p}_{k-1}, \f{x}_{k-1}}/\|\f{p}_{k-1}\|^2)\f{p}_{k-1}$ \Comment{Pseudo-inverse Solution}
			\State \Return $\f{x}^{+}$
		\end{algorithmic}
	\end{algorithm}
	\begin{theorem}[Pseudo-inverse Solution with \cref{alg:cr_pis}]
		\label{thm:cr:pseudo}
		Under \cref{assumpt:unlucky}, in \cref{alg:cr_pis}, we have $ \f{x}^{+} = \f{A}^{\dagger}\f{b} $.
	\end{theorem}
	Beyond the fact that CR naturally recovers a normal solution, the procedure for obtaining the pseudo-inverse solution in CR is far more straightforward than that for CG as it merely involves the addition of one simple one-dimensional projection at termination.
	
	Under \cref{assumpt:unlucky}, we simply have 
	\begin{align*}
		\|\f{r}_t\|^2 & = \|\f{r}_{t-1} - \alpha_{t-1}\f{Ap}_{t-1}\|^2 = \|\f{r}_{t-1}\|^2 + \alpha_{t-1}^2\|\f{Ap}_{t-1}\|^2 - 2\alpha_{t-1}\lr{\f{r}_{t-1},\f{Ap}_{t-1}}\\
		& = \|\f{r}_{t-1}\|^2 + \frac{\lr{\f{r}_{t-1},\f{Ar}_{t-1}}^2}{\|\f{Ap}_{t-1}\|^2} - 2\frac{\lr{\f{r}_{t-1},\f{Ar}_{t-1}}^2}{\|\f{Ap}_{t-1}\|^2} = \|\f{r}_{t-1}\|^2 - \frac{\lr{\f{r}_{t-1},\f{Ar}_{t-1}}^2}{\|\f{Ap}_{t-1}\|^2}\\ 
        &< \|\f{r}_{t-1}\|^2,
	\end{align*}
	which implies that the norm of the residuals in \cref{alg:cr} (or \cref{alg:cr_pis}) monotonically decreasing. On the other hand, MINRES also guarantees a monotonic decrease of the residual norms. The equivalence between CR and MINRES for PD systems is well-studied \cite{fong2012cg}. Next, we show that, in the absence of an unlucky breakdown, this equivalence indeed extends beyond PD settings to all systems.

	\subsubsection{Equivalence to MINRES}
	\label{sec:CR=MR}
	The minimum residual (MINRES) method of Paige and Saunders \cite{paige1975solution} is a versatile approach for solving any linear systems/least-squares problems involving symmetric matrices. The iterates are constructed in a way that prevents unlucky breakdowns, guaranteeing the algorithm's convergence to a normal solution. In fact, \cref{fact:convergence_consistent,thm:cr:Ar} hold true for MINRES without the necessity to exclude any unfortunate situations, such as those specified in \cref{assumpt:unlucky}.
	
	Recall that the $ k\th $ iteration of MINRES can be formulated as a solution to \cref{eq:minres_least_squares} for any symmetric matrix $\f{A}$. Unlike CR, MINRES explicitly builds an orthonormal basis of the Krylov subspace $ \krylov{A}{b}{k} $ through a procedure called the Lanczos process. With $ \f{v}_{1} = \f{b}/\vnorm{\f{b}} $, after $k$ iterations of the Lanczos process, and in the absence of round-off errors, the Lanczos vectors form an orthonormal matrix $ \f{V}_{k+1} = \begin{bmatrix} \f{v}_1  & \f{v}_{2} & \dots &  \f{v}_{k+1} \end{bmatrix} \in \mathbb{R}^{d \times (k+1)} $, whose columns span $\krylov{A}{b}{k+1}$, and satisfy $\f{A} \f{V}_{k} = \f{V}_{k+1} \tilde{\f{T}}_{k}$ where $ \tilde{\f{T}}_{k} \in \mathbb{R}^{(k+1) \times k}$ is an upper Hessenberg matrix of the form $\tilde{\f{T}}_{k}^{\T} = \begin{bmatrix}
		\f{T}_{k}^{\T}&
		\beta\f{e}_{k}
	\end{bmatrix}$, and $\f{T}_k \in \mathbb{R}^{k \times k}$, $\beta$ and $\f{e}_k$ are the Lanczos tridiagonal matrix, some real number constant and a $ k\th $ unit vector respectively.
	Defining $\f{x} = \f{V}_{k}\f{y}$ for $ \f{y} \in \mathbb{R}^{k} $, one can show that \cref{eq:minres_least_squares} can be simplified to 
	\begin{align*}
		\f{y}_{k} \triangleq \argmin{\f{y} \in \mathbb{R}^{k}} \frac{1}{2}\left\|\|\f{b}\|\f{e}_1 - \tilde{\f{T}}_{k} \f{y}\right\|^2.
	\end{align*}
	In essence, the above problem is solved using the QR factorization of  $\tilde{\f{T}}_{k}$. However, by exploiting its sparsity structure, we can simplify the computations and obtain an update procedure of the form $\f{x}_k = \f{x}_{k-1} + \tau_{k-1}\f{d}_{k-2}$, for some step size $\tau_{k-1}$ and the update direction $\f{d}_k$; see the works \cite{paige1975solution,liu2022minres} for more details.
	
	For positive definite matrices, it is well-known that CR and MINRES, mathematically speaking, generate identical iterates \cite{fong2012cg}. However, beyond PD settings, such an equivalence, at least to our knowledge, has not been thoroughly studied. In these general settings, the $\f{A}$-weighted product $ \lr{.,.}_{\f{A}} $ in CR is no longer a valid inner product and the equivalence between CR and MINRES is no longer automatic. In this section, we study the existence of such an equivalence in indefinite and/or inconsistent settings, and show that, under \cref{assumpt:unlucky}, both CR and MINRES generate the same sequence of iterates for any symmetric systems.
	
	To that end, we first demonstrate that, just like MINRES, subproblems of CR can also be formulated as \cref{eq:minres_least_squares} for any symmetric matrix $ \f{A} $. Let $\f{P}_{k} = \begin{bmatrix}
		\f{p}_0 & \cdots & \f{p}_{k-1}
	\end{bmatrix}$ and $\f{R}_{k} = \begin{bmatrix}
		\f{r}_0 & \cdots & \f{r}_{k-1}
	\end{bmatrix}$ be the matrices whose columns are, respectively, the update directions and the residual vectors from CR. Using \cref{prop:properties}-\labelcref{property:cgcr} and noting that $\f{r}_{0} = \f{b}$, we can write $\f{x} = \f{P}_{k}\f{z}$ for $ \f{z} \in \mathbb{R}^{k} $ and
	\begin{align*}
		\min_{\f{x} \in \krylov{A}{b}{k}}\frac{1}{2}\|\f{b} - \f{Ax}\|^2 & = \min_{\f{z} \in \mathbb{R}^{k}}\frac{1}{2}\left\|\f{R}_{k}\f{e}_1 - \f{AP}_{k}\f{z}\right\|^2,
	\end{align*}
	which gives
	\begin{align}
		\label{eq:z}
		\f{z}_{k} & \triangleq \argmin{\f{z} \in \mathbb{R}^{k}} \frac{1}{2}\lr{\f{z}, \f{P}_{k}^{\T}\f{A}^2\f{P}_{k}\f{z}} - \lr{\f{z},\f{P}_{k}^{\T}\f{A}\f{R}_{k}\f{e}_1}.
	\end{align}
	By \cref{prop:properties}-\labelcref{property:cr}, the matrix $\f{D}_{k} \triangleq \f{P}_{k}^{\T}\f{A}^2\f{P}_{k}$ is diagonal, i.e.,
	\begin{align*}
		\f{D}_{k} = \begin{bmatrix}
			\|\f{Ap}_0\|^2 & & & \\
			& \|\f{Ap}_1\|^2 & & \\
			& & \ddots & \\
			& & & \|\f{Ap}_{k-1}\|^2
		\end{bmatrix}.
	\end{align*}
	Note that, by \cref{lem:cr:Ap}, the matrix $\f{D}_k$ is always invertible prior to termination. Also, from Properties \labelcref{prop:properties}-\labelcref{property:cgcr} and \labelcref{prop:properties}-\labelcref{property:cr}, it follows that the matrix $\f{K}_{k} \triangleq \f{P}_{k}^{\T}\f{A}\f{R}_{k}$ is lower triangular, i.e.,
	\begin{align*}
		\f{K}_{k} = \begin{bmatrix}
			\lr{\f{r}_0, \f{Ap}_0} & 0 & \cdots & 0 \\
			\lr{\f{r}_0, \f{Ap}_1} & \lr{\f{r}_1, \f{Ap}_1} & \cdots & 0 \\
			\vdots & \vdots & \ddots & \vdots\\
			\lr{\f{r}_0, \f{Ap}_{k-1}} & \lr{\f{r}_1, \f{Ap}_{k-1}} & \cdots & \lr{\f{r}_{k-1}, \f{Ap}_{k-1}}
		\end{bmatrix}.
	\end{align*}
	Optimality condition for \cref{eq:z} implies $\f{z}_{k}  = \f{D}_{k}^{-1}\f{K}_{k}\f{e}_1$. The vector $\f{K}_{k}\f{e}_1$ can be written as,
	\[\f{K}_{k}\f{e}_1 = \begin{bmatrix}
		\lr{\f{r}_0, \f{Ap}_0}\\
		\lr{\f{r}_0, \f{Ap}_1}\\
		\vdots\\
		\lr{\f{r}_0, \f{Ap}_{k-1}}
	\end{bmatrix} = \begin{bmatrix}
		\lr{\f{r}_0, \f{Ar}_0}\\
		\beta_0\lr{\f{r}_0, \f{Ar}_0}\\
		\vdots\\
		\prod_{i=1}^{k}\beta_{i-1}\lr{\f{r}_0, \f{Ar}_0}
	\end{bmatrix} = \begin{bmatrix}
		\lr{\f{r}_0, \f{Ar}_0}\\
		\lr{\f{r}_1, \f{Ar}_1}\\
		\vdots\\
		\lr{\f{r}_{k-1}, \f{Ar}_{k-1}}
	\end{bmatrix}\]
	where the last equality follows from the definition of $\beta$ in \cref{alg:cr}. Together, we get
	\[\f{z}_{k} = \f{D}_{k}^{-1}\f{K}_{k}\f{e}_1 = \begin{bmatrix}
		\lr{\f{r}_0, \f{Ar}_0}/\|\f{Ap}_0\|^2\\
		\lr{\f{r}_1, \f{Ar}_1}/\|\f{Ap}_1\|^2\\
		\vdots\\
		\lr{\f{r}_{k-1}, \f{Ar}_{k-1}}/\|\f{Ap}_{k-1}\|^2
	\end{bmatrix} = \begin{bmatrix}
		\alpha_0\\
		\alpha_1\\
		\vdots\\
		\alpha_{k-1}
	\end{bmatrix},\]
	which are exactly the same step-sizes, $\alpha$, generated by \cref{alg:cr}. This implies that, just like MINRES, CR's $ k\th $ iterate is also a minimizer of \cref{eq:minres_least_squares}.

    A direct implication of the above result is that CR and MINRES will, in exact arithmetic, generate the same sequence of iterates, $\{\f{x}_{k-1}\}_{k=1}^{k=g}$, for consistent but potentially singular systems. 
    \begin{lemma}
		\label{lem:consistent_mr=cr}
    Let $ \f{b} \in \Range(\f{A}) $. Then, under \cref{assumpt:unlucky}, CR and MINRES generate the same sequence of iterates.
    \end{lemma}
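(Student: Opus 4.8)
The plan is to lean on the variational characterization shared by both methods. As established in the discussion immediately preceding this lemma, the $k\th$ iterate of CR equals $\f{P}_k\f{z}_k$ where $\f{z}_k$ solves \cref{eq:z}, which is exactly the reduced form of $\min_{\f{x}\in\krylov{A}{b}{k}}\tfrac12\|\f{b}-\f{Ax}\|^2$; and by definition the $k\th$ iterate of MINRES solves the same problem \cref{eq:minres_least_squares}. Hence both iterates are minimizers of $\|\f{b}-\f{Ax}\|^2$ over $\krylov{A}{b}{k}$, and it suffices to show (i) that this minimizer is \emph{unique} for consistent systems, for every $1\le k\le g$, and (ii) that the two methods produce iterates for exactly the same set of indices $k=1,\dots,g$.

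For (i), suppose $\f{x},\f{x}'\in\krylov{A}{b}{k}$ both minimize $\|\f{b}-\f{Ax}\|^2$. The minimum of $\|\f{b}-\f{w}\|^2$ over the subspace $\f{A}\krylov{A}{b}{k}$ is attained at the unique orthogonal projection of $\f{b}$ onto it, so $\f{Ax}=\f{Ax}'$, i.e. $\f{x}-\f{x}'\in\Null(\f{A})$. On the other hand, since $\f{b}\in\Range(\f{A})$ and $\Range(\f{A})$ is $\f{A}$-invariant, every power $\f{A}^i\f{b}$ lies in $\Range(\f{A})$, so $\krylov{A}{b}{k}\subseteq\Range(\f{A})$; and because $\f{A}$ is symmetric, $\Range(\f{A})\perp\Null(\f{A})$. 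Therefore $\f{x}-\f{x}'\in\Null(\f{A})\cap\Range(\f{A})=\{\f{0}\}$, so $\f{x}=\f{x}'$. Note this also resolves a potential ambiguity on the MINRES side: even if the reduced least-squares problem in $\f{y}$ has non-unique solutions, any $\f{y}_k$ returned by the Paige--Saunders recursion yields the same $\f{x}_k=\f{V}_k\f{y}_k$, since $\f{x}_k\in\krylov{A}{b}{k}$ is optimal for \cref{eq:minres_least_squares} and the optimum is unique.

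For (ii), under \cref{assumpt:unlucky} and $\f{b}\in\Range(\f{A})$, \cref{lem:pg=0_consistent} and \cref{prop:convergence_consistent} give that CR generates $\f{x}_1,\dots,\f{x}_g$ and terminates with $\f{r}_g=\zero$ (and $\f{p}_g=\zero$); likewise the Lanczos process underlying MINRES runs for exactly $g$ steps, $g$ being the grade of $\f{b}$ with respect to $\f{A}$, before breaking down, at which point the MINRES residual also vanishes. Combining this index-alignment with the per-index uniqueness from (i) yields $\f{x}_k^{\mathrm{CR}}=\f{x}_k^{\mathrm{MINRES}}$ for every $1\le k\le g$, which is the claim. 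The only mildly delicate point is making precise that MINRES's iterate genuinely solves \cref{eq:minres_least_squares} over the \emph{full} Krylov subspace (not merely a reduced surrogate), but this is exactly what the relation $\f{A}\f{V}_k=\f{V}_{k+1}\tilde{\f{T}}_k$ together with $\f{x}_k=\f{V}_k\f{y}_k$ encodes, so no genuine obstacle arises.
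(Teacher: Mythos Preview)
Your proof is correct and follows essentially the same approach as the paper: both arguments reduce to showing that \cref{eq:minres_least_squares} has a unique minimizer over $\krylov{A}{b}{k}$ when $\f{b}\in\Range(\f{A})$, using the key observation that $\krylov{A}{b}{k}\subseteq\Range(\f{A})$. The paper phrases uniqueness via positive definiteness of the reduced Hessian $\f{W}_k^{\T}\f{A}^2\f{W}_k$ (which is equivalent to your $\Null(\f{A})\cap\krylov{A}{b}{k}=\{\zero\}$), whereas you argue directly via orthogonal projection and $\Null(\f{A})\cap\Range(\f{A})=\{\zero\}$; your part (ii) on index alignment is a reasonable addition that the paper leaves implicit.
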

    \begin{proof}
        As showed above, both iterates of CR and MINRES correspond to solutions of \cref{eq:minres_least_squares}. It suffices to show that, for consistent systems, \cref{eq:minres_least_squares} admits a unique solution for any $ 1\leq k \leq g $. Let $\f{W}_k$ be any basis of $\krylov{A}{b}{k}$. Defining $\f{x} = \f{W}_{k} \f{y}$ for $ \f{y} \in \Rd $, we can write \cref{eq:minres_least_squares} as
        \begin{align*}
            \argmin{\f{y} \in \mathbb{R}^{k}}\frac{1}{2}\left\|\f{b} - \f{AW}_{k}\f{y}\right\|^2 = \argmin{\f{y} \in \mathbb{R}^k}\frac{1}{2}\lr{\f{y},\f{W}_k^{\T}\f{A}^2\f{W}_k\f{y}} - \lr{\f{y},\f{W}_k^{\T}\f{Ab}}.
        \end{align*}
        By the assumption on $ \f{b} $, $\Span(\f{W}_k) \subseteq \Range(\f{A})$, which implies $ \f{W}_k^{\T}\f{A}^2\f{W}_k \succ \zero $. Hence, the above problem admits a unique solution $\f{y}_{k}$, resulting in a unique $\f{x}_{k}$.
    \end{proof}
    Hence, when $\f{b} \in \Range(\f{A})$, there can be only one unique minimizer of \cref{eq:minres_least_squares} in $\krylov{A}{b}{k}$ for $1 \leq k \leq g$. However, the case for inconsistent systems may appear less straightforward. Specifically, when $\f{b} \not\in \Range(\f{A})$, there may be more than one solution to \cref{eq:minres_least_squares}.
    The crucial step, now, is to show that, despite possibly many solutions to \cref{eq:minres_least_squares}, the minimizer from CR is indeed the same as the one from MINRES, i.e., the sequence of iterates $\{\f{x}_{k-1}\}_{k=1}^{k=g}$ for both algorithms, in exact arithmetic, coincide. 
    To do this, we first establish a counterpart of \cref{lem:cr_tilde} for MINRES (\cref{lem:inconsisent_mr_same}) and then leverage \cref{lem:consistent_mr=cr,lem:cr_tilde,lem:inconsisent_mr_same} to establish our claim (\cref{thm:mr=cr}). A diagram depicting our proof strategy is given in \cref{fig:diagram}.
	\begin{figure}
		\begin{center}
			\begin{tikzpicture}
				\matrix (m) [matrix of math nodes,row sep=5em,column sep=18em, minimum width=2em]
				{
					\text{CR}(\f{A}, \ft{b}) & \text{MINRES}(\f{A},\ft{b}) \\
					\text{CR}(\f{A}, \f{b}) &  \text{MINRES}(\f{A},\f{b}) \\
				};
				\path[-stealth]
				(m-1-1) edge [<->] node [left] {\cref{lem:cr_tilde}} 
				node [right] {$\f{A}^\dagger\f{A}\f{x}_{k-1}^{\text{CR}} = \ft{x}_{k-1}^{\text{CR}}$} (m-2-1)
				edge [<->, dashed] node [above] {\cref{lem:consistent_mr=cr}} 
				node [below] {$\ft{x}_{k-1}^{\text{CR}} = \ft{x}_{k-1}^{\text{MINRES}}$} (m-1-2)
				(m-1-2) edge [<->, dashed] node [right] {\cref{lem:inconsisent_mr_same}} 
				node [left] {$\f{A}^\dagger\f{A}\f{x}_{k-1}^{\text{MINRES}} = \ft{x}_{k-1}^{\text{MINRES}}$} (m-2-2)
				(m-2-1) edge [<->, dashed] node [below] {\cref{thm:mr=cr}}
				node [above] {$\f{x}_{k-1}^{\text{CR}} = \f{x}_{k-1}^{\text{MINRES}}$} (m-2-2);
			\end{tikzpicture}
		\end{center}
		\caption{Our proof strategy for establishing equivalence between MINRES and CR. To prove \cref{thm:mr=cr}, we use \cref{lem:consistent_mr=cr,lem:cr_tilde,lem:inconsisent_mr_same}.}\label{fig:diagram}
	\end{figure}
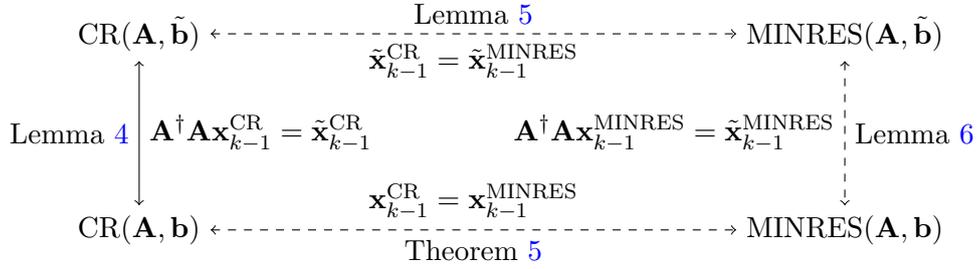
	
	\begin{lemma}\label{lem:inconsisent_mr_same}
		Consider the iterates generated by the procedures MINRES$(\f{A}, \ft{b})$ and MINRES$(\f{A}, \f{b})$, and let where $g$ be the grade of $\f{b}$ with respect to $\f{A}$. For $1 \leq k \leq g$, we have $\ft{x}_{k-1} = \f{A}^\dagger\f{A}\f{x}_{k-1}$.
	\end{lemma}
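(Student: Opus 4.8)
The plan is to mirror the argument used for \cref{lem:consistent_mr=cr}, namely to exploit the variational characterisation of the MINRES iterates together with the orthogonal decomposition $\f{b} = \ft{b} + \f{b}^{\perp}$. Recall that for any symmetric $\f{A}$ the $(k-1)$\th iterate of MINRES$(\f{A},\f{b})$ is a minimiser of $\vnorm{\f{b} - \f{Ax}}^{2}$ over $\mathcal{K}_{k-1}(\f{A},\f{b})$ (this is \cref{eq:minres_least_squares} with index $k-1$), and likewise $\ft{x}_{k-1}$ minimises $\vnorm{\ft{b} - \f{Ax}}^{2}$ over $\mathcal{K}_{k-1}(\f{A},\ft{b})$. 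As in the proof of \cref{lem:consistent_mr=cr}, since $\ft{b} \in \Range(\f{A})$, the latter problem has a \emph{unique} minimiser: for any basis $\f{W}$ of that Krylov space, $\f{W} \subseteq \Range(\f{A})$ and hence the Hessian $\f{W}^{\T}\f{A}^{2}\f{W}$ is PD. By \cref{rem:grade} the grade of $\ft{b}$ is $\tilde g = g-1$, so both iterates indexed $k-1$ are well defined for every $1 \le k \le g$, and the base case $k=1$ is trivial since $\f{x}_{0} = \ft{x}_{0} = \zero$.

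The first key step is a Krylov bookkeeping identity. Writing $\f{\Pi} \triangleq \f{A}^{\dagger}\f{A}$ for the orthogonal projector onto $\Range(\f{A})$ (valid since $\f{A}$ is symmetric), I claim that $\f{\Pi}\,\mathcal{K}_{k-1}(\f{A},\f{b}) = \mathcal{K}_{k-1}(\f{A},\ft{b})$. Indeed, $\f{A}^{j}\f{b} = \f{A}^{j}\ft{b}$ for every $j \ge 1$ because $\f{b}^{\perp} \in \Null(\f{A})$, so $\mathcal{K}_{k-1}(\f{A},\f{b}) = \Span\{\f{b}, \f{A}\ft{b}, \ldots, \f{A}^{k-2}\ft{b}\}$; applying $\f{\Pi}$ sends $\f{b} \mapsto \ft{b}$ and fixes each $\f{A}^{j}\ft{b}$ with $j \ge 1$, and the span of $\{\ft{b}, \f{A}\ft{b}, \ldots, \f{A}^{k-2}\ft{b}\}$ is exactly $\mathcal{K}_{k-1}(\f{A},\ft{b})$. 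A short dimension count using $\tilde g = g-1$ shows that for $k-1 \le g-1$ this map is a bijection between the two Krylov spaces, and that $\mathcal{K}_{k-1}(\f{A},\ft{b}) \subseteq \Range(\f{A})$.

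The second step combines this with the decomposition. Since $\f{Ax} = \f{A}\f{\Pi}\f{x}$ for all $\f{x}$, and $\f{b} - \f{Ax} = (\ft{b} - \f{Ax}) + \f{b}^{\perp}$ is an orthogonal split (with $\ft{b} - \f{Ax} \in \Range(\f{A})$ and $\f{b}^{\perp} \in \Null(\f{A})$), we obtain $\vnorm{\f{b} - \f{Ax}}^{2} = \vnorm{\f{b}^{\perp}}^{2} + \vnorm{\ft{b} - \f{A}\f{\Pi}\f{x}}^{2}$ for every $\f{x} \in \mathcal{K}_{k-1}(\f{A},\f{b})$. Minimising over that Krylov space and using the bijection from the previous step, the minimiser $\f{x}_{k-1}$ of the left-hand side maps under $\f{\Pi}$ to a minimiser of $\vnorm{\ft{b} - \f{Ay}}^{2}$ over $\mathcal{K}_{k-1}(\f{A},\ft{b})$. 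By the uniqueness established in the first paragraph, that minimiser must be $\ft{x}_{k-1}$, whence $\f{A}^{\dagger}\f{A}\f{x}_{k-1} = \f{\Pi}\f{x}_{k-1} = \ft{x}_{k-1}$, as claimed.

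The main obstacle I anticipate is the Krylov bookkeeping rather than any analytic difficulty: one must check carefully that the grades line up, i.e. $\tilde g = g-1$, so that for all indices $1 \le k \le g$ the projected least-squares problem over $\mathcal{K}_{k-1}(\f{A},\ft{b})$ is genuinely nondegenerate (unique solution), and that $\f{\Pi}$ maps $\mathcal{K}_{k-1}(\f{A},\f{b})$ \emph{onto} $\mathcal{K}_{k-1}(\f{A},\ft{b})$ rather than into a proper subspace — it is precisely surjectivity that permits transferring the minimiser from one problem to the other. Everything else is a routine consequence of $\f{A}^{\dagger}\f{A}$ being the orthogonal projector onto $\Range(\f{A})$ and of the variational description of MINRES already recalled in the paper; note in particular that, unlike \cref{lem:cr_tilde}, this argument does not require \cref{assumpt:unlucky}, since MINRES is constructed so as to never break down.
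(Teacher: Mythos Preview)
Your argument is correct and follows essentially the same route as the paper: use the Pythagorean split $\vnorm{\f{b}-\f{Ax}}^{2}=\vnorm{\f{b}^{\perp}}^{2}+\vnorm{\ft{b}-\f{Ax}}^{2}$, then push the minimisation through the substitution $\ft{x}=\f{A}^{\dagger}\f{A}\f{x}$ to land on the MINRES$(\f{A},\ft{b})$ subproblem over $\mathcal{K}_{k-1}(\f{A},\ft{b})$. You are in fact more careful than the paper, which simply asserts ``it naturally follows'' without spelling out the surjectivity of $\f{\Pi}$ onto $\mathcal{K}_{k-1}(\f{A},\ft{b})$ or the uniqueness of the consistent minimiser---both of which you identify and handle explicitly.
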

	\begin{proof}
		We first note that, by Pythagorean theorem, $\|\f{b} - \f{Ax}\|^2 = \|\ft{b} - \f{Ax}\|^2 + \|\f{b}^\perp\|^2$. Hence, the subproblem associated with the $ k\th $ iterate from MINRES$(\f{A}, \f{b})$ can be equivalently written as 
		\begin{align*}
			\min_{\f{x} \in \krylov{A}{b}{k}}\|\ft{b} - \f{Ax}\|^2,
		\end{align*}
		which using $ \f{A} = \f{A}\f{A}^{\dagger}\f{A} $ gives 
		\begin{align*}
			\min_{\f{x} \in \krylov{A}{b}{k}}\|\ft{b} - \f{A}\f{A}^{\dagger}\f{A}\f{x}\|^2.
		\end{align*}
		Denoting $ \ft{x} = \f{A}^{\dagger}\f{A}\f{x}  $, and using the fact that $\f{A}\f{b} = \f{A}\ft{b}$, it follows that $ \ft{x} \in \f{A}^{\dagger}\f{A} \krylov{A}{b}{k} = \mathcal{K}_k(\f{A},\ft{b}) $. Hence, \cref{eq:minres_least_squares} can equivalently be written as 
		\begin{align*}
			\min_{\ft{x} \in \mathcal{K}_k(\f{A},\ft{b})}\|\ft{b} - \f{A}\ft{x}\|^2,
		\end{align*}
		which is precisely the subproblem associated with the $ k\th $ iterate from MINRES$(\f{A}, \ft{b})$. Hence, it naturally follows that $\f{A}^\dagger\f{A}\f{x}_{k-1} = \ft{x}_{k-1}$.
	\end{proof}
	\begin{theorem}\label{thm:mr=cr}
		Under \cref{assumpt:unlucky}, CR and MINRES generate the same sequence of iterates.
	\end{theorem}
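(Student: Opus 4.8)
The plan is to reduce the general statement to the consistent case already settled in \cref{lem:consistent_mr=cr}, following the square in \cref{fig:diagram}. If $\f{b}\in\Range(\f{A})$ there is nothing further to do, since \cref{lem:consistent_mr=cr} \emph{is} the claim. So I would assume $\f{b}\notin\Range(\f{A})$ and decompose $\f{b}=\ft{b}+\f{b}^{\perp}$ as in \cref{eq:orth_decomp}. Since $\ft{b}\in\Range(\f{A})$, the pair $(\f{A},\ft{b})$ is consistent; moreover, by \cref{rem:grade}, the four procedures CR$(\f{A},\f{b})$, MINRES$(\f{A},\f{b})$, CR$(\f{A},\ft{b})$ and MINRES$(\f{A},\ft{b})$ all terminate after the same number of iterations, so the iterates $\ft{x}_{k-1}$ of the two consistent procedures are defined for every $1\le k\le g$, where $g$ is the grade of $\f{b}$ with respect to $\f{A}$.

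The first step is to traverse the three known edges of the diagram. Applying \cref{lem:consistent_mr=cr} to the consistent system $(\f{A},\ft{b})$ gives $\ft{x}_{k-1}^{\text{CR}}=\ft{x}_{k-1}^{\text{MINRES}}$; \cref{lem:cr_tilde} gives $\f{A}^{\dagger}\f{A}\f{x}_{k-1}^{\text{CR}}=\ft{x}_{k-1}^{\text{CR}}$; and \cref{lem:inconsisent_mr_same} gives $\f{A}^{\dagger}\f{A}\f{x}_{k-1}^{\text{MINRES}}=\ft{x}_{k-1}^{\text{MINRES}}$. Chaining these, for all $1\le k\le g$,
\begin{align*}
	\f{A}^{\dagger}\f{A}\f{x}_{k-1}^{\text{CR}}=\f{A}^{\dagger}\f{A}\f{x}_{k-1}^{\text{MINRES}},
\end{align*}
so, since $\f{A}^{\dagger}\f{A}$ is the orthogonal projector onto $\Range(\f{A})$, the difference $\f{x}_{k-1}^{\text{CR}}-\f{x}_{k-1}^{\text{MINRES}}$ lies in $\Null(\f{A})$.

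The hard part is to promote this to equality of the full iterates: the three lemmas only pin down the $\Range(\f{A})$-component, and a priori each iterate may carry a nonzero $\f{b}^{\perp}$-component. To close this gap I would use that both $\f{x}_{k-1}^{\text{CR}}$ and $\f{x}_{k-1}^{\text{MINRES}}$ lie in $\krylov{A}{b}{k-1}$, hence so does their difference. For $1\le k\le g$ we have $k-1<g$, so $\f{b},\f{Ab},\dots,\f{A}^{k-1}\f{b}$ are linearly independent; consequently, if $\f{v}=\sum_{i=0}^{k-2}c_{i}\f{A}^{i}\f{b}\in\Null(\f{A})$ then $\zero=\f{Av}=\sum_{i=0}^{k-2}c_{i}\f{A}^{i+1}\f{b}$ forces $c_{0}=\dots=c_{k-2}=0$, i.e. $\krylov{A}{b}{k-1}\cap\Null(\f{A})=\{\zero\}$ (equivalently, $\f{A}$ is injective on $\krylov{A}{b}{k-1}$, so the least-squares subproblem \cref{eq:minres_least_squares} has a unique minimizer, which both methods return). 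Hence $\f{x}_{k-1}^{\text{CR}}=\f{x}_{k-1}^{\text{MINRES}}$ for every $1\le k\le g$, and combined with the matching iteration counts from \cref{rem:grade} this is exactly the assertion. I expect the main obstacle to be conceptual rather than computational: recognizing that the diagram alone only closes the argument up to $\Null(\f{A})$, and that the Krylov-injectivity observation (available precisely because we never reach index $g$) is what supplies the missing piece.
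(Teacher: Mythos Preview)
Your proposal is correct and follows essentially the same route as the paper: invoke \cref{lem:consistent_mr=cr,lem:cr_tilde,lem:inconsisent_mr_same} to obtain $\f{A}^{\dagger}\f{A}\f{x}_{k-1}^{\text{CR}}=\f{A}^{\dagger}\f{A}\f{x}_{k-1}^{\text{MINRES}}$, then use a linear-independence argument in the Krylov subspace to upgrade this to full equality. The only cosmetic difference is in that last step: the paper rewrites the projected equality in the basis $\{\f{A}^{i-1}\ft{b}\}_{i=1}^{k-1}$ and appeals to the linear independence of that family (via $\tilde g=g-1$), whereas you argue that the difference lies in $\Null(\f{A})\cap\krylov{A}{b}{k-1}$ and show this intersection is trivial by applying $\f{A}$ and using the linear independence of $\{\f{A}^{i}\f{b}\}_{i=1}^{k-1}$. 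These are equivalent formulations of the same injectivity observation.
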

	\begin{proof}
		\cref{lem:consistent_mr=cr} establishes this result for the consistent case. So consider the inconsistent setting. Note that, in this case, since $ \tilde{g} = g - 1 $ (see \cref{rem:grade}), the vectors $\{\ft{b}, \f{A}\ft{b}, \cdots, \f{A}^{k-1}\ft{b}\}$ are linearly independent for $1 \leq k \leq g-1$. Let $ \f{x}_{k-1}^{\text{MINRES}} $ and $ \f{x}_{k-1}^{\text{CR}} $ be the $ k\th $ iterates from the procedures MINRES$(\f{A}, \f{b})$ and CR$(\f{A}, \f{b})$, respectively. Clearly, $ \f{x}_{0}^{\text{MINRES}} =  \f{x}_{0}^{\text{CR}} = \zero $. So consider any $ 2 \leq k \leq g $. Since the subproblem of the both procedures can be formulated as \cref{eq:minres_least_squares}, we have
		\begin{align*}
			\f{x}_{k-1}^{\text{MINRES}} &=\sum_{i=1}^{k-1}a_{i-1}\f{A}^{i-1}\f{b}, \quad \text{and} \quad \f{x}_{k-1}^{\text{CR}} =\sum_{i=1}^{k-1}b_{i-1}\f{A}^{i-1}\f{b}.
		\end{align*}
		Using   \cref{lem:consistent_mr=cr,lem:cr_tilde,lem:inconsisent_mr_same}, we have $\f{A}^\dagger\f{A} \f{x}_{k-1}^{\text{MINRES}} = \f{A}^\dagger\f{A} \f{x}_{k-1}^{\text{CR}}$, which implies
		\begin{align*}
			\sum_{i=1}^{k-1}(a_{i-1} - b_{i-1}) \f{A}^{i-1}\ft{b} =\zero.
		\end{align*}        
		Now, from linear independence of $ \{\f{A}^{i-1}\ft{b}\}_{i=1}^{k-1} $, we get $ a_{i-1} = b_{i-1} $ for $1 \leq i \leq k-1$.
	\end{proof}
	\cref{thm:mr=cr} implies that all properties established for MINRES in prior works, such as non-positive curvature detection,          monotonicity, and lifting procedures in \cite{liu2022minres,liu2023obtaining}, are also readily applicable to CR.
    
    After establishing the equivalence between MINRES and CR under \cref{assumpt:unlucky}, it is of theoretical interest to investigate the behavior of MINRES when CR experiences an unlucky breakdown. To address this, we present the following two lemmas that characterize this behavior.
    \begin{lemma}\label{lem:cr_rAr=0_dependent}
        In MINRES/CR, for $ k \leq g - 1 $, the residual vectors $\{\mathbf{r}_0, \mathbf{r}_1, \cdots, \mathbf{r}_{k}\} $ are linearly independent if and only if $ \langle \mathbf{r}_{i-1}, \mathbf{Ar}_{i-1} \rangle \neq 0 $  for  $ 1 \leq i \leq k $.
    \end{lemma}
    \begin{proof}
        First, we note that $\f{r}_i \not= \f{0}$, for $0 \leq i \leq g - 1$. Now, let $\lr{\f{r}_{i-1},\f{Ar}_{i-1}} \not= 0$, for $1 \leq i \leq k$, and assume that $\{\f{r}_0, \f{r}_1, \cdots, \f{r}_{k}\}$ are linearly dependent. Then, there must exist scalars, $c_i$, not all zero, such that $\sum_{i=0}^{k} c_i\f{r}_i = \f{0}$. However, by \cref{prop:properties}-\labelcref{property:cr},  $c_i\lr{\f{r}_i,\f{Ar}_i} = 0, \; 0 \leq i \leq k-1$, which by assumption implies that $c_i = 0$, for all $0 \leq i \leq k - 1$. So, we must have $c_k\f{r}_k = \f{0}$. But since $\f{r}_k\not=0$, we must have $c_k = 0$, which leads to a contradiction. This implies that $\{\f{r}_0, \f{r}_1, \cdots, \f{r}_{k}\}$ are linearly independent vectors. Conversely, suppose  $\lr{\f{r}_i, \f{Ar}_i} = 0$ for some $0 \leq i \leq k-1$. From \cref{alg:cr}, since $\f{r}_{i+1} = \f{r}_i - \alpha_i\f{Ap}_i$ and $\alpha_i = \lr{\f{r}_i,\f{Ar}_i}/\|\f{Ap}_i\|^2 = 0$, it follows that $\f{r}_{i+1} = \f{r}_i$.
    \end{proof}
    Note that if $\lr{\f{r}_k,\f{Ar}_k} = 0$ for the first time, the residual vectors $\{\f{r}_0,\f{r}_1,\cdots,\f{r}_k\}$ are still linearly independent.
    \begin{lemma}
        In MINRES/CR, let $ k \leq g - 1 $. The condition $ \langle \mathbf{r}_{k-1}, \mathbf{A}\mathbf{r}_{k-1} \rangle = 0 $ (indicating an unlucky breakdown for CR) occurs for the first time if and only if the Lanczos tridiagonal matrix $ \mathbf{T}_k $ becomes singular for the first time.
    \end{lemma}
    \begin{proof}
        Let $\f{V}_k \in \mathbb{R}^{d \times k}$ be the Lanczos matrix with orthonormal columns, generated within MINRES, that span $\krylov{A}{b}{k}$. Recall that $\f{V}_k^\top\f{AV}_k = \f{T}_k \in \mathbb{R}^{k \times k}$, where $\f{T}_k$ is tridiagonal. Let $\f{R}_k = \begin{bmatrix}
            \f{r}_0 & \f{r}_1 & \cdots & \f{r}_{k-1}
        \end{bmatrix} = \f{V}_k\f{Z}_{k}$ for some $\f{Z}_{k} \in \mathbb{R}^{k \times k}$. Consider the following expression,
        \begin{align*}
            \begin{bmatrix}
                \lr{\f{r}_0,\f{Ar}_0} & & & \\
                & \lr{\f{r}_1,\f{Ar}_1} & & \\
                & & \ddots & \\
                & & & \lr{\f{r}_{k-1},\f{Ar}_{k-1}}
            \end{bmatrix} = \f{R}_{k}\f{AR}_k = \f{Z}_{k}^\top\f{V}_k^\top\f{AV}_k\f{Z}_{k} = \f{Z}_{k}^\top\f{T}_k\f{Z}_{k}.
        \end{align*}
        where the first equality follows from \cref{prop:properties}-\labelcref{property:cr}. By  \cref{lem:cr_rAr=0_dependent}, as long as  $\lr{\f{r}_{i},\f{Ar}_{i}} \neq 0$ for $0 \leq i \leq k-2$, the columns of $\f{R}_k$ are linearly independent and $\f{Z}_{k}$ must be full rank. Hence, $\lr{\f{r}_{k-1},\f{Ar}_{k-1}} = 0$ if and only if $\f{T}_k$ is singular.
    \end{proof}\label{sec:inconsistent_system}

    \section{Numerical Experiments}\label{sec:num_exp}
    In this section, we conduct a series of experiments to verify aspects of the theory developed in this paper and to explore the empirical performance of the methods discussed in various settings. While numerical inaccuracies due to finite precision arithmetic in iterative methods have long been well known and thoroughly studied, our main focus is to highlight the additional and unexpectedly catastrophic instability of CG in inconsistent settings--behavior not typically observed in consistent settings--and compare it to the CR alternative. In particular, we set out to demonstrate the following:  
    \begin{enumerate}  
        \item CR is generally more stable than CG in most cases, regardless of consistency.  
        \item Unlike CR, CG exhibits extreme instability under inconsistent systems.  
        \item Unlike CR, CG can be unstable even for PD matrices with a few small eigenvalues.  
    \end{enumerate}
    In \cref{sec:synthetic}, using synthetic examples, our aim is to empirically verify \cref{lem:cg:Ap,thm:cg:Ar}. Additionally, we will investigate the stability of these methods and identify instances where the theoretical predictions deviate significantly from practical observations. Moving to \cref{sec:real-life}, our focus remains on the same objectives, but within the context of real-world applications. It is crucial to note that our primary goal here is to observe the behavior of these methods in their original form, without incorporating any additional enhancements such as reorthogonalization strategies (if applicable). 
	
	Algorithms are terminated either upon reaching a maximum number of iterations or when a specific termination condition is met. For CG, this condition is chosen as $\min\{\|\f{r}_k\| / \|\f{b}\|, \|\f{Ap}_k\| / \|\f{Ab}\|\}$ (cf.\ \cref{fact:convergence_consistent,lem:cg:Ap}), and for CR/MINRES, it is $\min\{\|\f{r}_k\|/\|\f{b}\|, \|\f{Ar}_k\|/\|\f{Ab}\|\}$ (cf.\ \cref{fact:convergence_consistent,thm:cr:Ar}).
 
	\subsection{Synthetic Problems}\label{sec:synthetic}
	In this section, we experiment with a series of simple and synthetic diagonal matrices to verify \cref{lem:cg:Ap,thm:cg:Ar} and to explore the stability of these methods in several instances. To validate \cref{lem:cg:Ap,thm:cg:Ar}, we focus on small-size problems in \cref{sec:convergence_cg} to avoid any unwanted numerical issues. Subsequently, we extend our analysis to larger-size matrices in \cref{sec:comparison,sec:instability_cg} to investigate the stability of these methods across a substantial number of iterations. 
	
	For our experiments, consider various matrices that are generated according to the following template:
	\begin{itemize}
		\item PSD Matrix: A diagonal matrix generated by sampling from a uniform distribution, with $m$ zero diagonals.
		\item Indefinite Matrix: A diagonal matrix generated by sampling from a normal distribution, with $m$ zero diagonals.
		\item (Slightly) Indefinite Matrix: A diagonal matrix sampled from a uniform distribution, with $m$ zero diagonals. Additionally, $\lceil 0.1 \times g \rceil$ of these diagonals have their signs flipped to negative, where $g$ represents the grade.
	\end{itemize}
	The vector $\f{b} \in \Rd$ is sampled from a normal distribution. The examples in this section are constructed in such a way that they are always inconsistent, unless otherwise stated.
	
	\subsubsection{CG at Termination: Validating \texorpdfstring{\cref{lem:cg:Ap,thm:cg:Ar}}{}}\label{sec:convergence_cg}
	In this section, we validate \cref{lem:cg:Ap,thm:cg:Ar}. For this, we consider $d = 10$, and generate six matrices by randomly zeroing out $m = 5, 4, 3, 2, 1$ and $0$ diagonal elements of $\f{A}$.
	The results are depicted in \cref{fig:cg_small_matrices,fig:cg_small_matrices_indefinite} where we consider PSD and indefinite settings, respectively. In all these examples, as predicted by \cref{lem:cg:Ap}, $\|\f{Ap}_{k}\|$ indeed vanishes at termination, while $\|\f{Ar}_{k}\|$ remains bounded away from zero in all but consistent case, validating \cref{thm:cg:Ar}. 
	 
    \begin{figure}[htbp]
		\centering
		\begin{subfigure}{0.32\textwidth}
			\includegraphics[width=\linewidth]{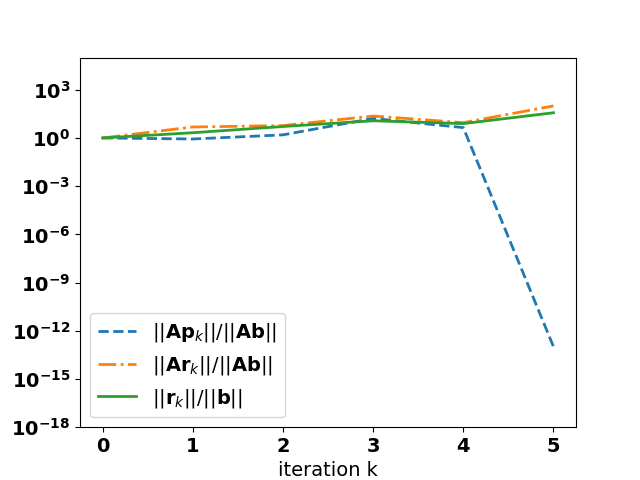}
		\end{subfigure}
		\begin{subfigure}{0.32\textwidth}
			\includegraphics[width=\linewidth]{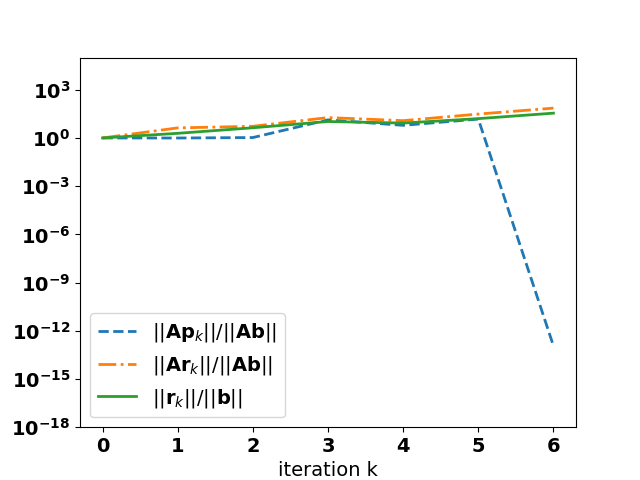}
		\end{subfigure}
		\begin{subfigure}{0.32\textwidth}
			\includegraphics[width=\linewidth]{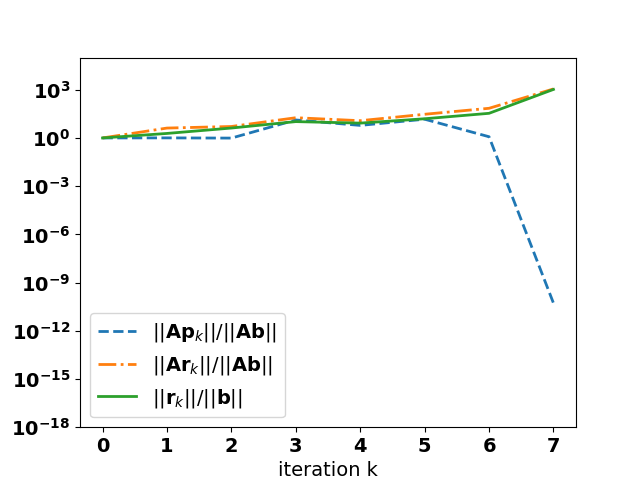}
		\end{subfigure}
		\begin{subfigure}{0.32\textwidth}
			\includegraphics[width=\linewidth]{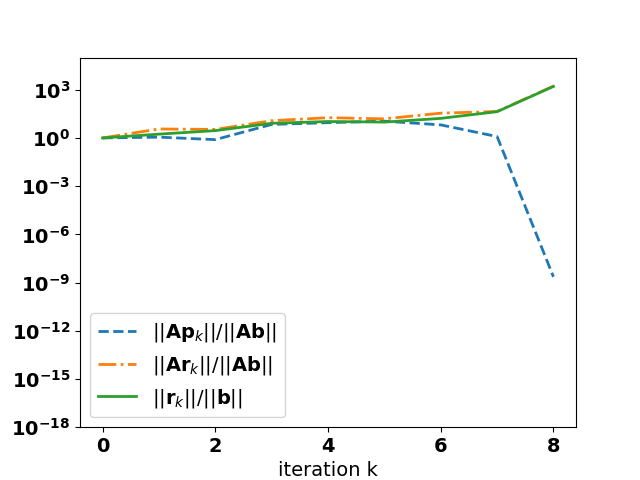}
		\end{subfigure}
		\begin{subfigure}{0.32\textwidth}
			\includegraphics[width=\linewidth]{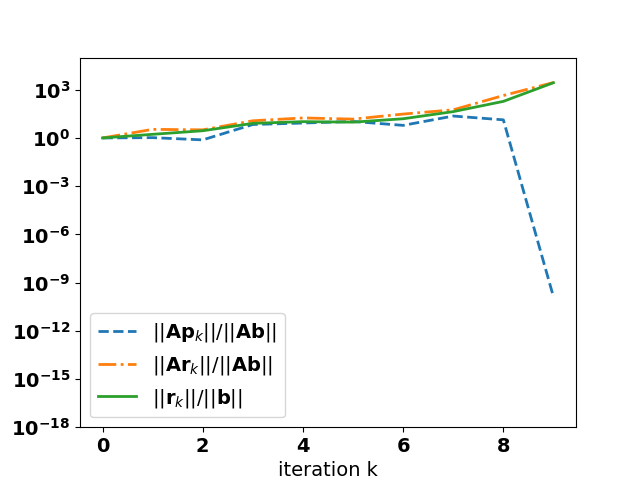}
		\end{subfigure}
		\begin{subfigure}{0.32\textwidth}
			\includegraphics[width=\linewidth]{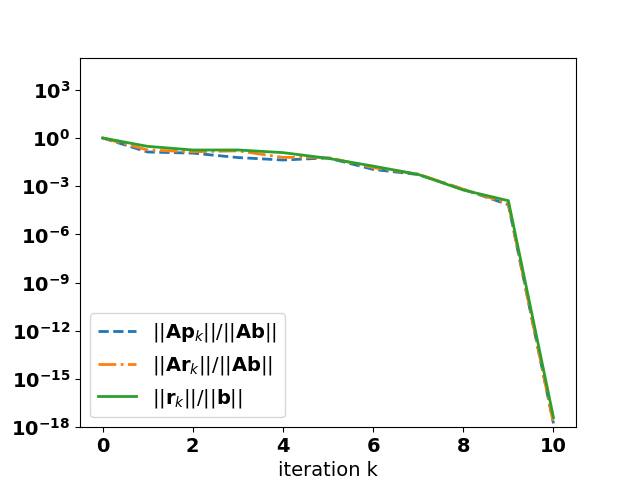}
		\end{subfigure}
		\caption{Experiments for \cref{sec:convergence_cg}  to verify \cref{lem:cg:Ap,thm:cg:Ar}. The matrices are PSD with $d = 10$.  From the top left to the bottom right, the grades are 6, 7, 8, 9, 10 and 10. The bottom right is a positive definite matrix.}
		\label{fig:cg_small_matrices}
	\end{figure}
	
	\begin{figure}[htbp]
		\centering
		\begin{subfigure}{0.32\textwidth}
			\includegraphics[width=\linewidth]{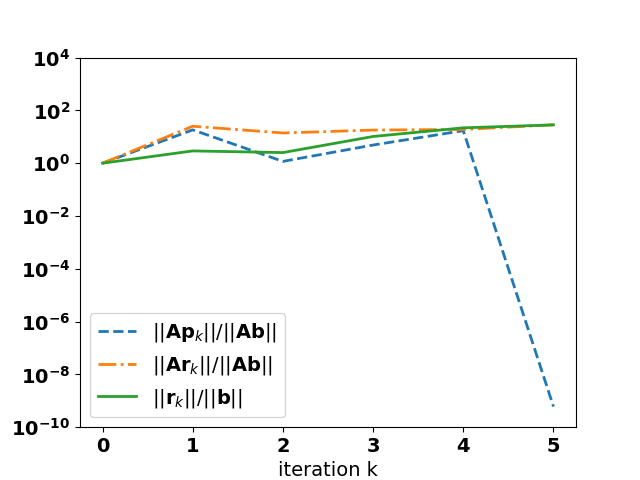}
		\end{subfigure}
		\begin{subfigure}{0.32\textwidth}
			\includegraphics[width=\linewidth]{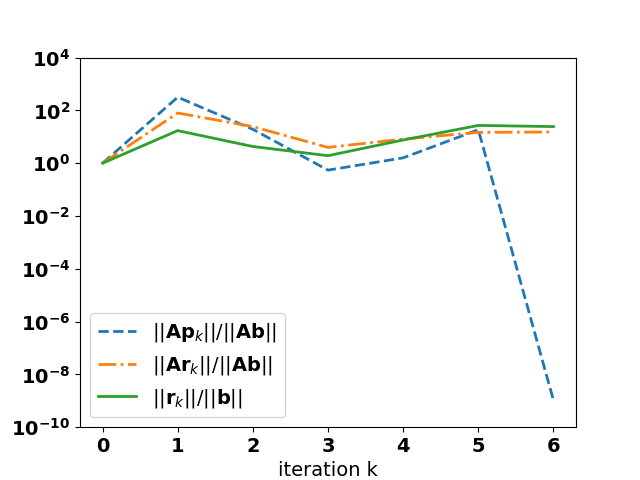}
		\end{subfigure}
		\begin{subfigure}{0.32\textwidth}
			\includegraphics[width=\linewidth]{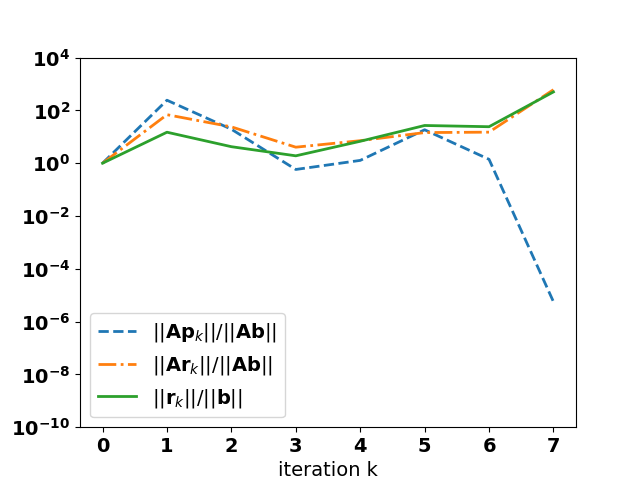}
		\end{subfigure}
		\begin{subfigure}{0.32\textwidth}
			\includegraphics[width=\linewidth]{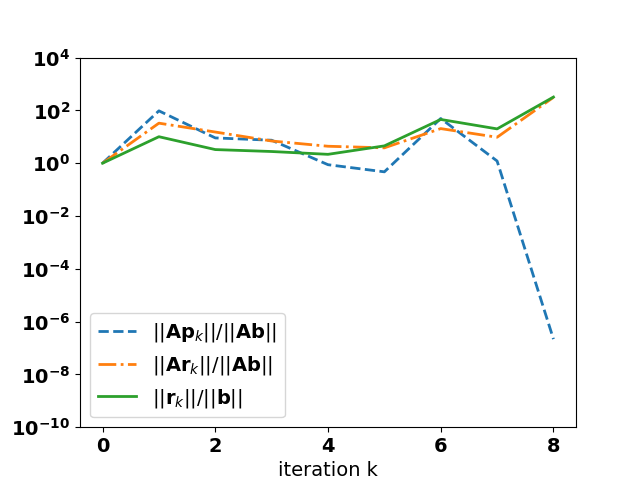}
		\end{subfigure}
		\begin{subfigure}{0.32\textwidth}
			\includegraphics[width=\linewidth]{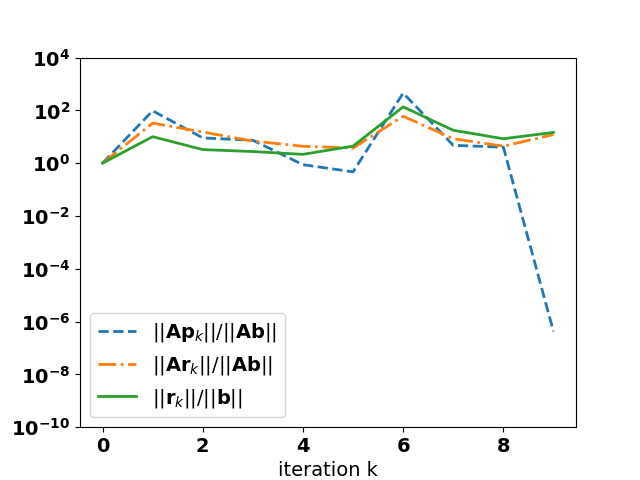}
		\end{subfigure}
		\begin{subfigure}{0.32\textwidth}
			\includegraphics[width=\linewidth]{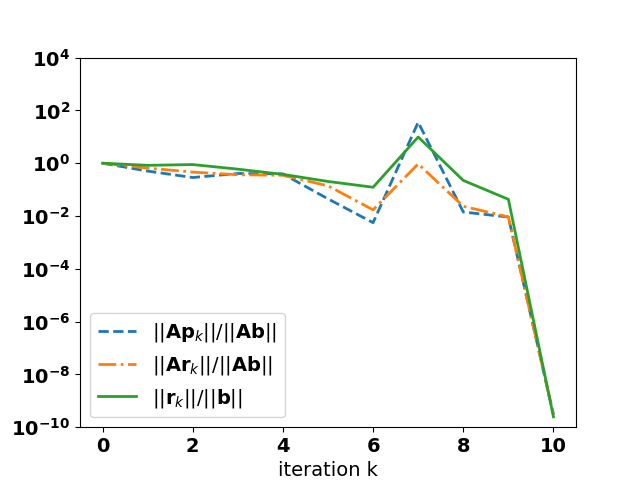}
		\end{subfigure}
		\caption{Experiments for \cref{sec:convergence_cg} to verify \cref{lem:cg:Ap,thm:cg:Ar}. The matrices are indefinite with $d = 10$.  From the top left to the bottom right, the grades are 6, 7, 8, 9, 10 and 10. The bottom right is a full rank indefinite matrix.}
		\label{fig:cg_small_matrices_indefinite}
	\end{figure}

 
	\subsubsection{Numerical Instability of CG}\label{sec:instability_cg}
	In this section, we consider larger size matrices than those in \cref{sec:convergence_cg} to explore the behavior of CG across a larger number of iterations. Specifically, we set $ d = 100 $ and generate a series of matrices by zeroing out $m = 90, 80, 60, 20, 1$ and $0$ diagonal entries. The maximum number of iterations allowed is capped at $g$, i.e., the corresponding grade for each example, and the tolerance for the termination condition $\min\{\|\f{r}_k\| / \|\f{b}\|, \|\f{Ap}_k\| / \|\f{Ab}\|\}$ is $10^{-8}$. 
	The results are gathered in \cref{fig:cgexplode_psd,fig:cgexplode_indefinite,fig:cgexplode_10p}, corresponding to PSD, indefinite, and slightly indefinite settings, respectively. In all these examples, when $g$ is small, the theoretical prediction of \cref{lem:cg:Ap} aligns with the observed behavior. However, as $g$ increases, necessitating more iterations from CG, a clear discrepancy emerges. Unless the system is consistent, the implications outlined in \cref{lem:cg:Ap} do not accurately reflect practical outcomes, highlighting the presence of substantial numerical inaccuracies within CG iterations.  Nonetheless, an interesting observation from \cref{fig:cgexplode_10p,fig:cgexplode_indefinite} is that the presence of negative eigenvalues appears to induce a form of oscillatory behavior in CG. While the depicted quantities remain bounded away from zero, they do not exhibit the pronounced growth observed in \cref{fig:cgexplode_psd}.
	
	\begin{figure}[htbp]
		\centering
		\begin{subfigure}{0.32\textwidth}
			\includegraphics[width=\linewidth]{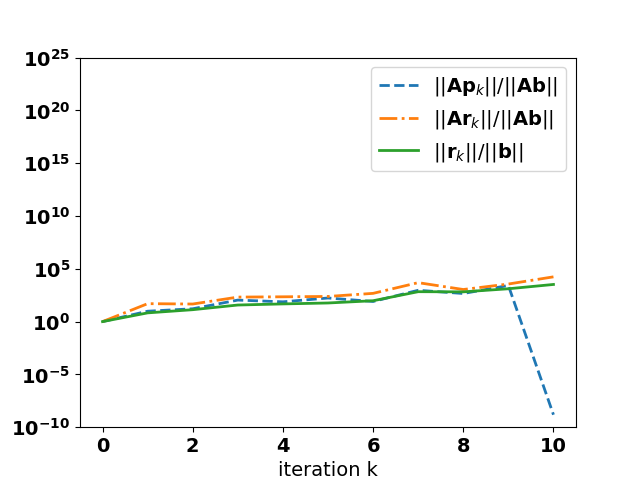}
		\end{subfigure}
		\begin{subfigure}{0.32\textwidth}
			\includegraphics[width=\linewidth]{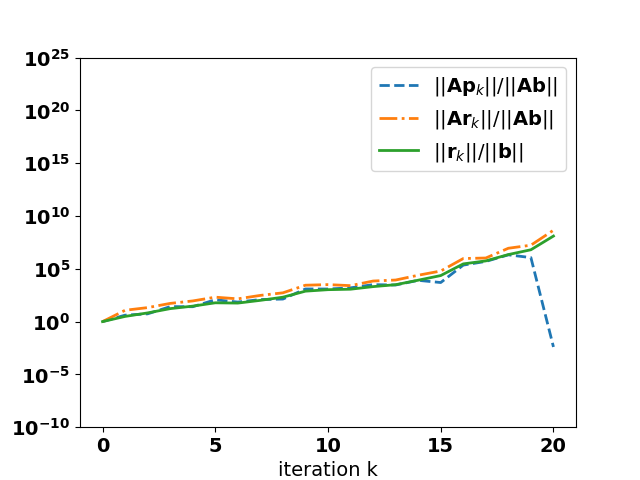}
		\end{subfigure}
		\begin{subfigure}{0.32\textwidth}
			\includegraphics[width=\linewidth]{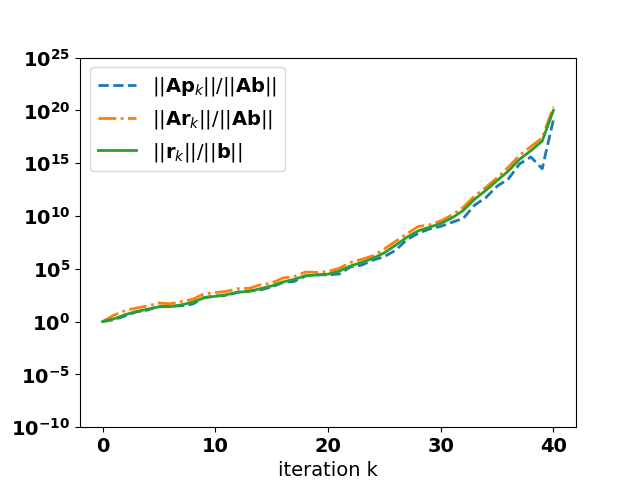}
		\end{subfigure}
		\begin{subfigure}{0.32\textwidth}
			\includegraphics[width=\linewidth]{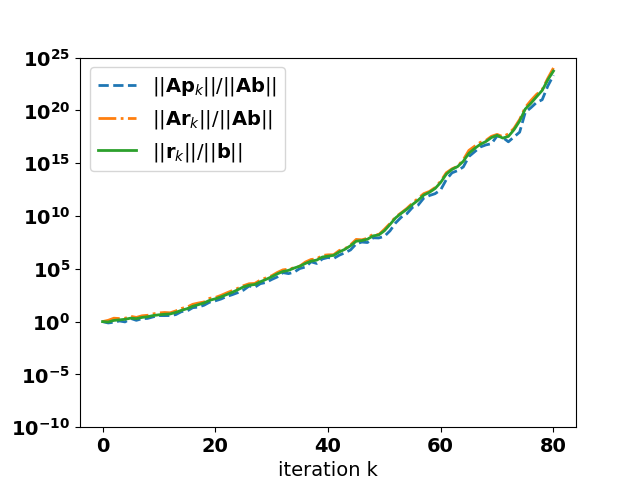}
		\end{subfigure}
		\begin{subfigure}{0.32\textwidth}
			\includegraphics[width=\linewidth]{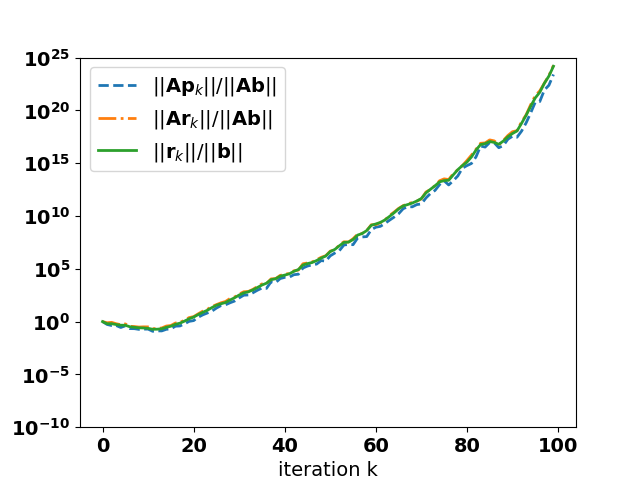}
		\end{subfigure}
		\begin{subfigure}{0.32\textwidth}
			\includegraphics[width=\linewidth]{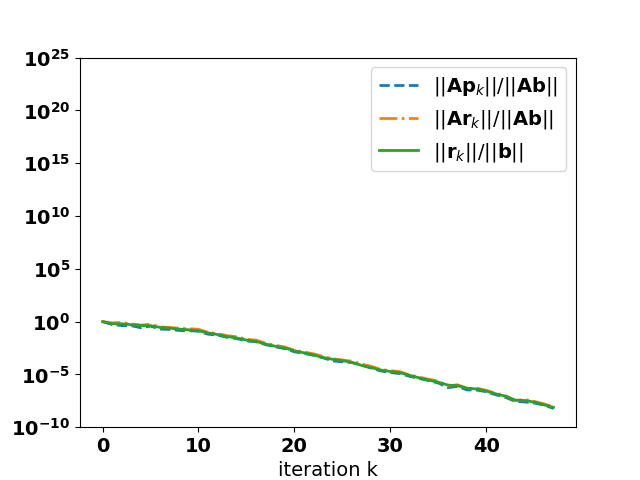}
		\end{subfigure}
		\caption{Experiments for \cref{sec:instability_cg} to explore instabilities within CG. The matrices are PSD with $d = 100$.  From top left to bottom right, the grades are 11, 21, 41, 81, 100 and 100. The bottom right is a positive definite matrix for which CG terminates at iteration 59 with $\|\f{r}_k\|/\|\f{b}\| < 10^{-8}$.}
		\label{fig:cgexplode_psd}
	\end{figure}
	
	\begin{figure}[htbp]
		\centering
		\begin{subfigure}{0.32\textwidth}
			\includegraphics[width=\linewidth]{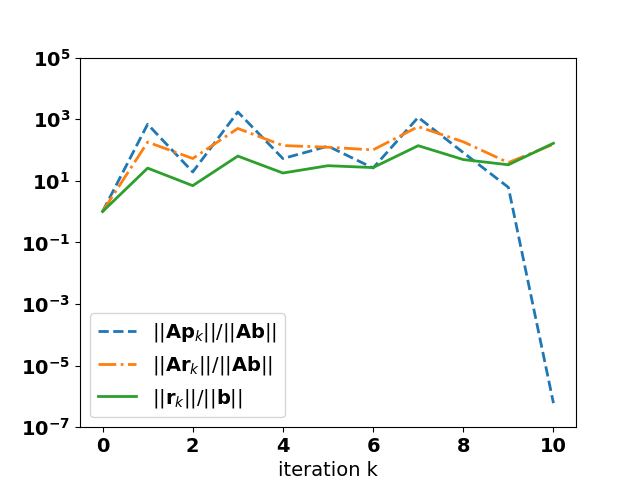}
		\end{subfigure}
		\begin{subfigure}{0.32\textwidth}
			\includegraphics[width=\linewidth]{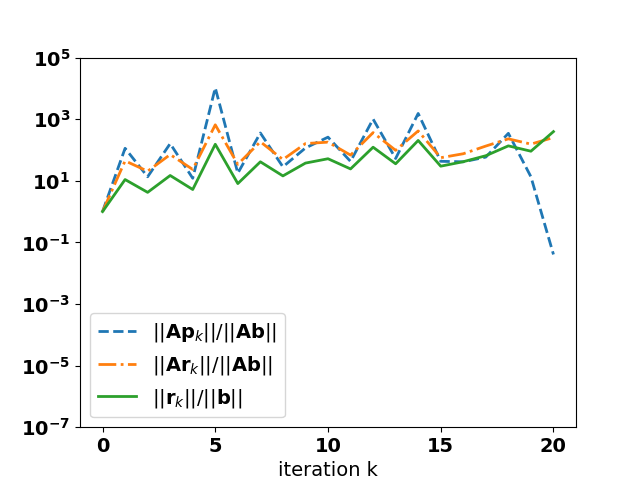}
		\end{subfigure}
		\begin{subfigure}{0.32\textwidth}
			\includegraphics[width=\linewidth]{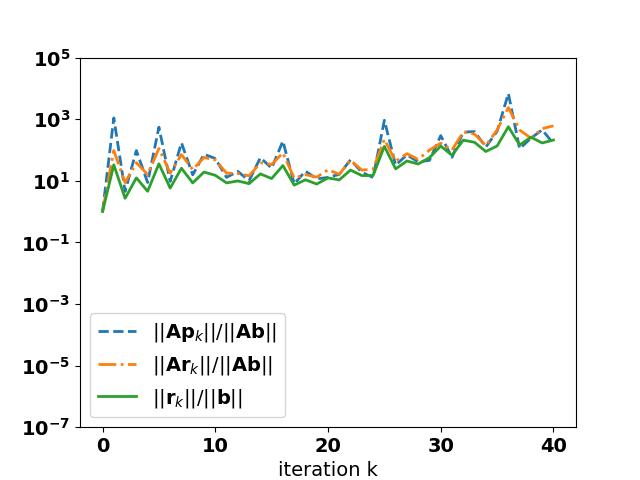}
		\end{subfigure}
		\begin{subfigure}{0.32\textwidth}
			\includegraphics[width=\linewidth]{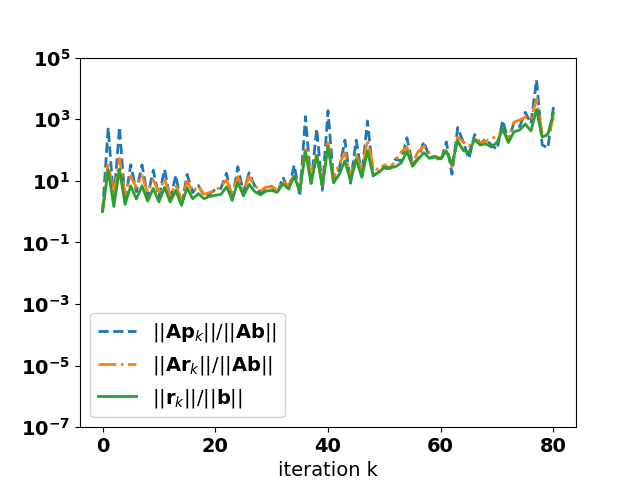}
		\end{subfigure}
		\begin{subfigure}{0.32\textwidth}
			\includegraphics[width=\linewidth]{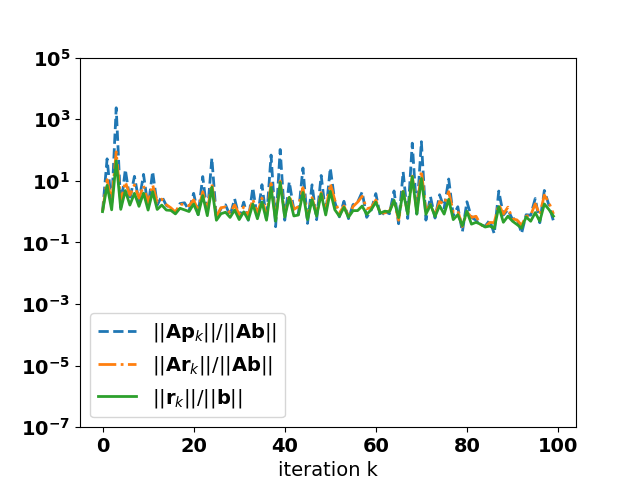}
		\end{subfigure}
		\begin{subfigure}{0.32\textwidth}
			\includegraphics[width=\linewidth]{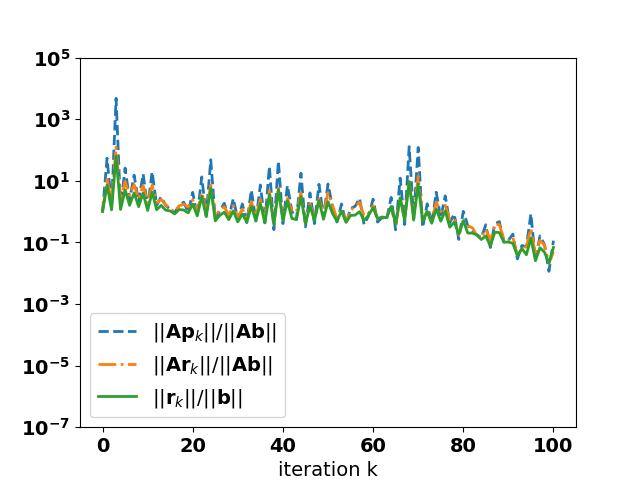}
		\end{subfigure}
		\caption{Experiments for \cref{sec:instability_cg} to explore instabilities within CG. The matrices are indefinite with $d = 100$.  From top left to bottom right, the grades are 11, 21, 41, 81, 100 and 100. The bottom right is an indefinite matrix with full rank for which CG reaches the maximum allowable iteration.}
		\label{fig:cgexplode_indefinite}
	\end{figure}
	
	\begin{figure}[htbp]
		\centering
		\begin{subfigure}{0.32\textwidth}
			\includegraphics[width=\linewidth]{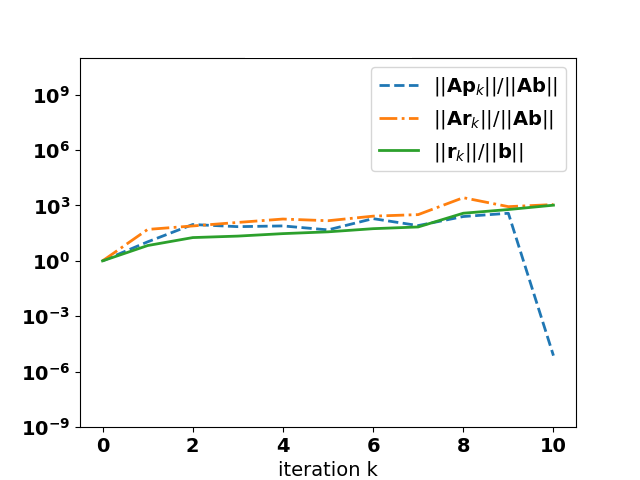}
		\end{subfigure}
		\begin{subfigure}{0.32\textwidth}
			\includegraphics[width=\linewidth]{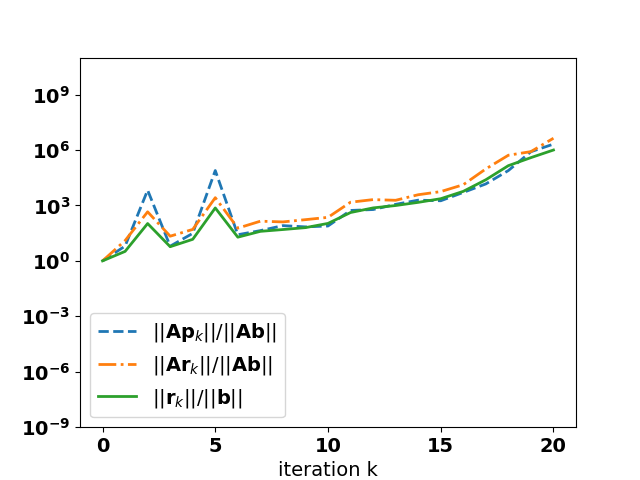}
		\end{subfigure}
		\begin{subfigure}{0.32\textwidth}
			\includegraphics[width=\linewidth]{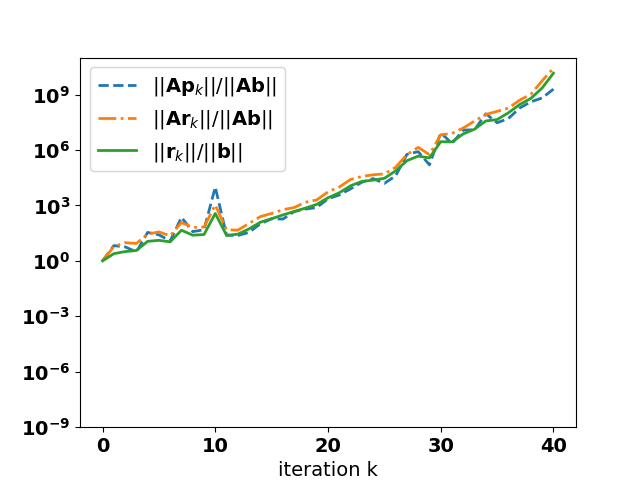}
		\end{subfigure}
		\begin{subfigure}{0.32\textwidth}
			\includegraphics[width=\linewidth]{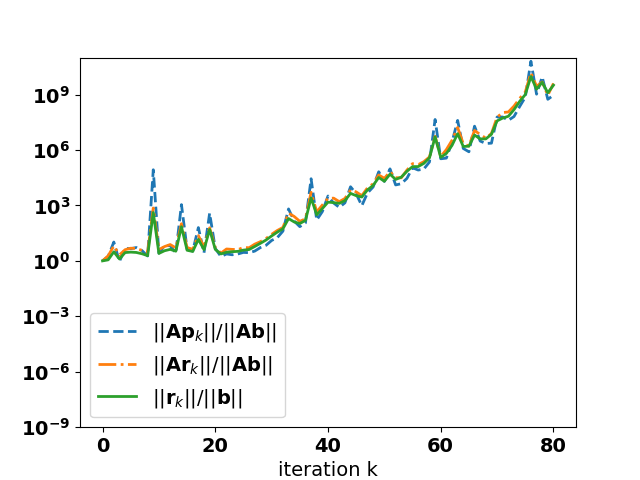}
		\end{subfigure}
		\begin{subfigure}{0.32\textwidth}
			\includegraphics[width=\linewidth]{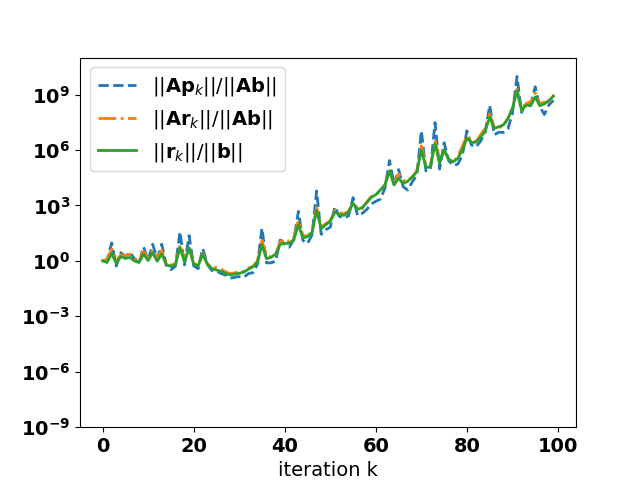}
		\end{subfigure}
		\begin{subfigure}{0.32\textwidth}
			\includegraphics[width=\linewidth]{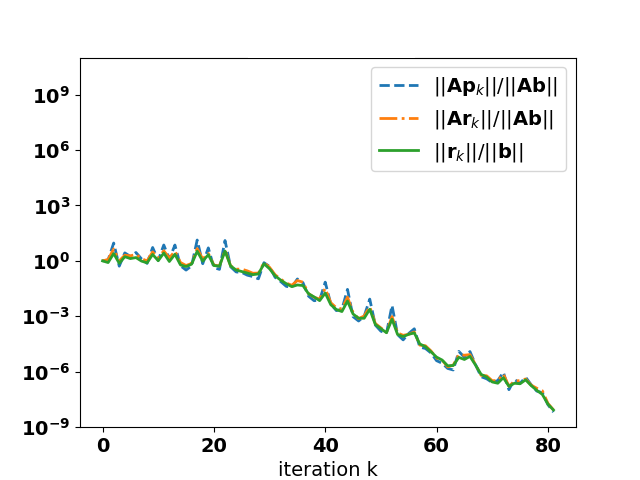}
		\end{subfigure}
		\caption{Experiments for \cref{sec:instability_cg} to explore instabilities within CG. The matrices are slightly indefinite with $d = 100$.  From the top left to the bottom right, the number of negative diagonal entries is $1, 2, 4, 8, 10$ and $10$, while the grades are as in \cref{fig:cgexplode_indefinite}.  The bottom right is a slightly indefinite matrix with full rank for which CG is terminated after the termination condition is triggered.}
		\label{fig:cgexplode_10p}
	\end{figure}
	
	\subsubsection{Numerical (In)Stability of CR/MINRES}\label{sec:comparison}
	In light of the observations in \cref{sec:instability_cg} concerning the apparent numerical instabilities of CG, our focus now shifts to examining the stability, or lack thereof, of CR and MINRES. In the absence of appropriate preconditioning or stabilization strategies like reorthogonalization, iterative procedures, such as CG, CR, or MINRES, inevitably encounter numerical challenges when applied to large and ill-conditioned systems. Nevertheless, in this section, we aim to explore and identify which of these methods, if any, demonstrate a visibly more adverse and unstable behavior.
	
	For the experiments in this section, we set $d = 1,000$ and $m = 800$. The maximum number of iterations is capped at $800$, and the termination condition is triggered at the threshold $10^{-8}$.
	To assess the extent of numerical inaccuracies, we monitor the fundamental theoretical properties of these methods, specifically orthogonality and conjugacy of the relevant vectors, as per \cref{prop:properties}-\labelcref{property:cg} and \cref{prop:properties}-\labelcref{property:cr}. For CG, the deviation from the properties outlined in \cref{prop:properties}-\labelcref{property:cg} is calculated at each iteration. This involves gathering the update directions $\f{p}_{k-1}$ and the normalized residual vectors $\f{r}_{k-1}/\vnorm{\f{r}_{k-1}}$ as columns of matrices $ \f{P}_k $ and $ \f{R}_k $, respectively. The spectral norm of the matrices $\f{I}_k - \f{R}_k^{\T}\f{R}_k$ and $\text{diag}(\{\lr{\f{p}_i,\f{Ap}_i}\}_{i=0}^{i=k}) - \f{P}_k^{\T}\f{AP}_k$ is then calculated, where ``diag'' refers to a diagonal matrix constructed from the given set of values. Following the properties outlined in  \cref{prop:properties}-\labelcref{property:cr},  similar quantities are also calculated for CR and MINRES, namely $\f{I}_k - (\f{AP}_k)^{\T}\f{AP}_k$ (here the vectors $\f{Ap}_{k-1}$ are normalized) and $\text{diag}(\{\lr{\f{r}_i,\f{Ar}_i}\}_{i=0}^{i=k}) - \f{R}_k^{\T}\f{AR}_k$. 
	In the absence of numerical issues, these quantities should all be evaluated to zero.
	
	In addition, we also monitor $\|(\f{b} - \f{Ax}_k) - \f{r}_k\|$ where $ \f{r}_k $ is the residual vector calculated naturally as part of the algorithm, for example, Line 5 in \cref{alg:cg,alg:cr} (a similar quantity also arise as part of MINRES; see \cite{liu2022minres}). Theoretically, the residual calculated within the algorithm must align with that calculated from the iterate as $\f{b} - \f{Ax}_k$. In practice, however, since they are obtained through different means, we expect some discrepancies between them due to numerical errors. Nevertheless, methods that are inherently more stable should exhibit less severe discrepancies.
	
	The results of the experiments for this section are gathered \cref{fig:comparison_psd,fig:comparison_indefinite,fig:comparison_10}. In stark contrast to CR and MINRES, CG exhibits a significant degree of numerical instability in \cref{fig:comparison_psd}, reaching a level that can be categorized as catastrophic. These observations align well with the findings in \cite{kaasschieter1988preconditioned} who suggests that CG under inconsistent systems could lead to divergence, though it does not provide any more insights. Furthermore, in \cref{fig:comparison_indefinite,fig:comparison_10}, although all methods experience some loss of orthogonality and conjugacy in their underlying vectors, CR and MINRES demonstrate slightly more stable numerical behavior. This is evident in the quantity $\|(\f{b} - \f{Ax}_k) - \f{r}_k\|$, and they make relatively satisfactory progress toward obtaining a normal solution with a relatively small residual from the normal equations. 
	
	\begin{figure}[htbp]
		\centering
		\begin{subfigure}{0.32\textwidth}
			\includegraphics[width=\linewidth]{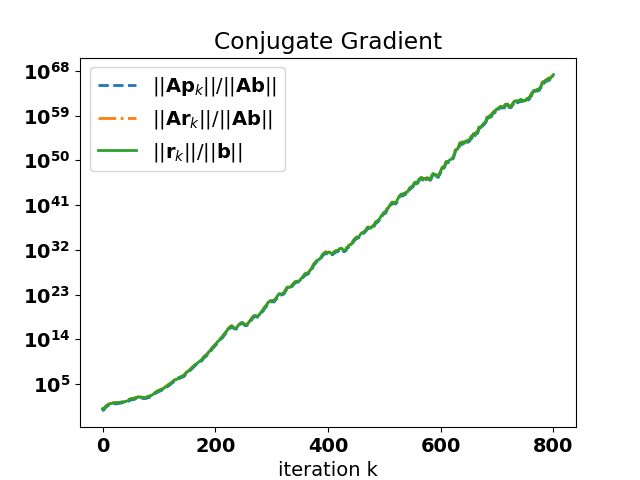}
		\end{subfigure}
		\begin{subfigure}{0.32\textwidth}
			\includegraphics[width=\linewidth]{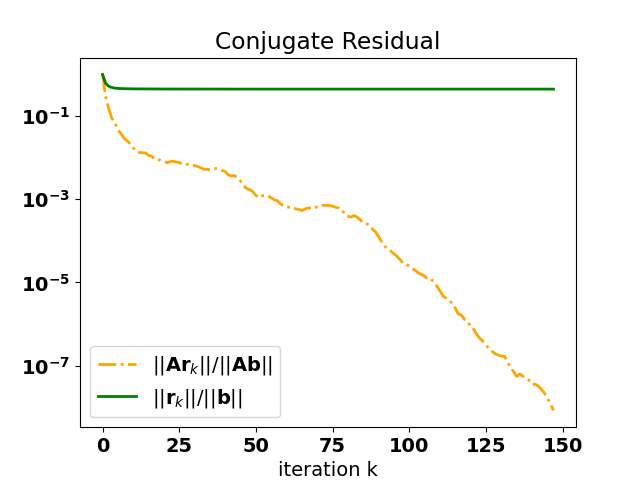}
		\end{subfigure}
		\begin{subfigure}{0.32\textwidth}
			\includegraphics[width=\linewidth]{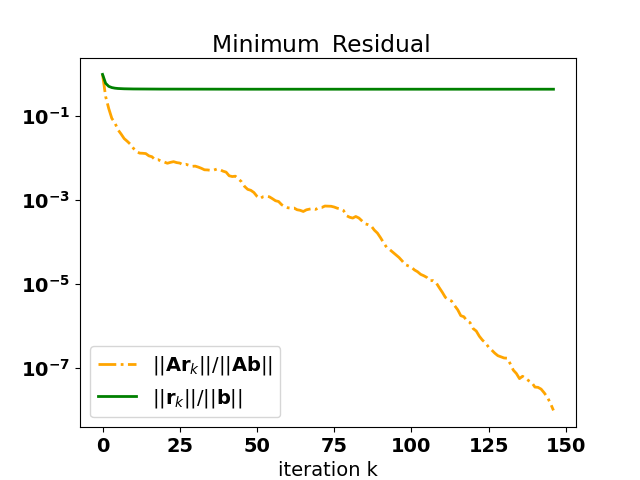}
		\end{subfigure}
		\begin{subfigure}{0.32\textwidth}
			\includegraphics[width=\linewidth]{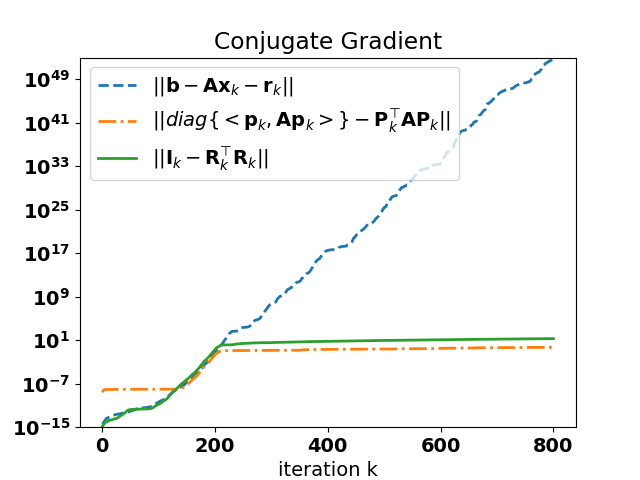}
			\caption*{CG}
		\end{subfigure}
		\begin{subfigure}{0.32\textwidth}
			\includegraphics[width=\linewidth]{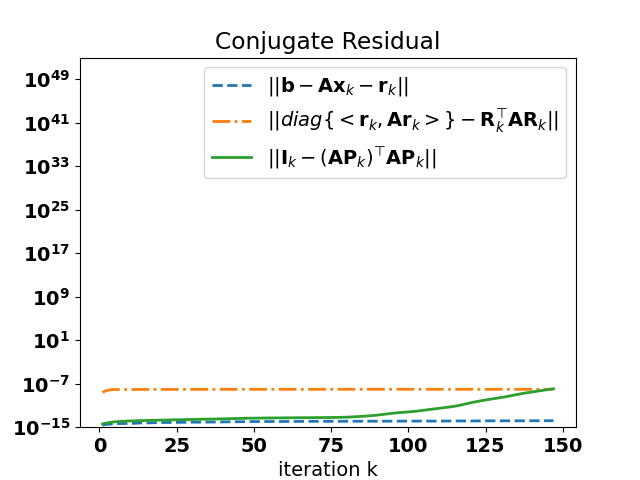}
			\caption*{CR}
		\end{subfigure}
		\begin{subfigure}{0.32\textwidth}
			\includegraphics[width=\linewidth]{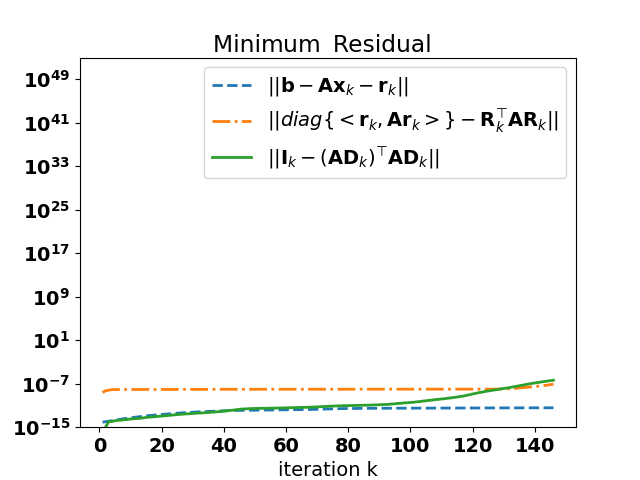}
			\caption*{MINRES}
		\end{subfigure}
		\caption{Experiments for \cref{sec:comparison}  to compare numerical (in)stability among CG, CR, and MINRES. The matrix is PSD with $d = 1000$ and $m = 800$. Surprisingly, the relative residual and $\|(\f{b} - \f{Ax}_k) - \f{r}_k\|$ for CG reach catastrophically large levels. In contrast, not only do both CR and MINRES perform almost identically as predicted, but they also exhibit significantly more stability compared to CG.}
		\label{fig:comparison_psd}
	\end{figure}
	
	\begin{figure}[htbp]
		\centering
		\begin{subfigure}{0.32\textwidth}
			\includegraphics[width=\linewidth]{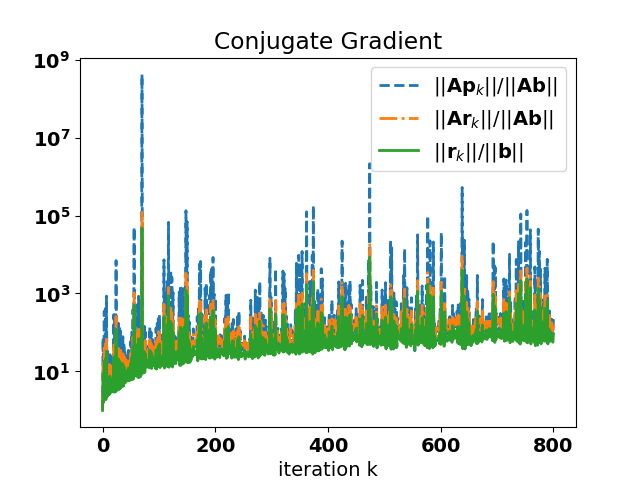}
		\end{subfigure}
		\begin{subfigure}{0.32\textwidth}
			\includegraphics[width=\linewidth]{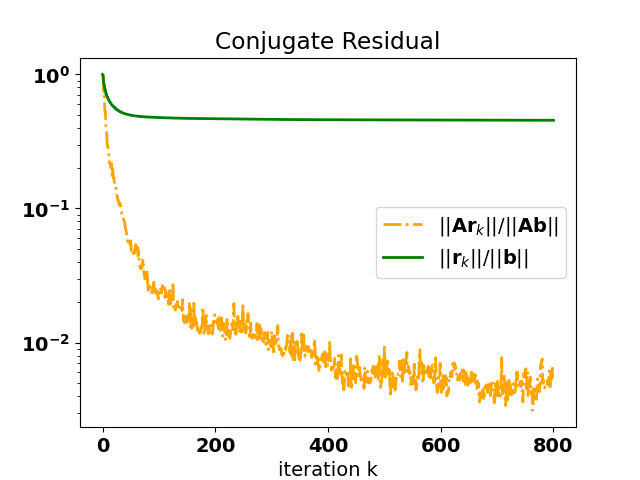}
		\end{subfigure}
		\begin{subfigure}{0.32\textwidth}
			\includegraphics[width=\linewidth]{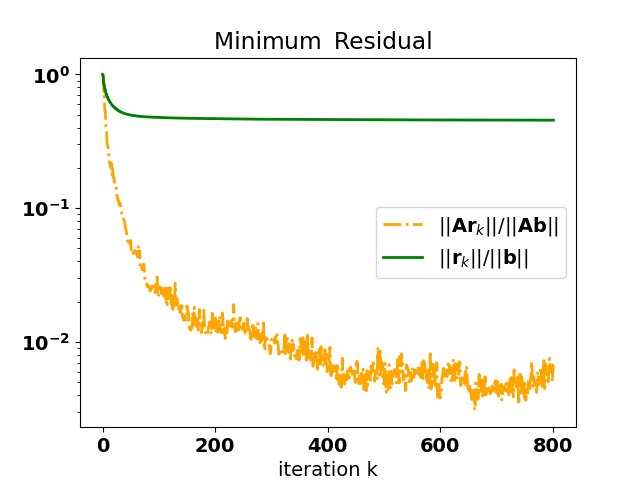}
		\end{subfigure}
		\begin{subfigure}{0.32\textwidth}
			\includegraphics[width=\linewidth]{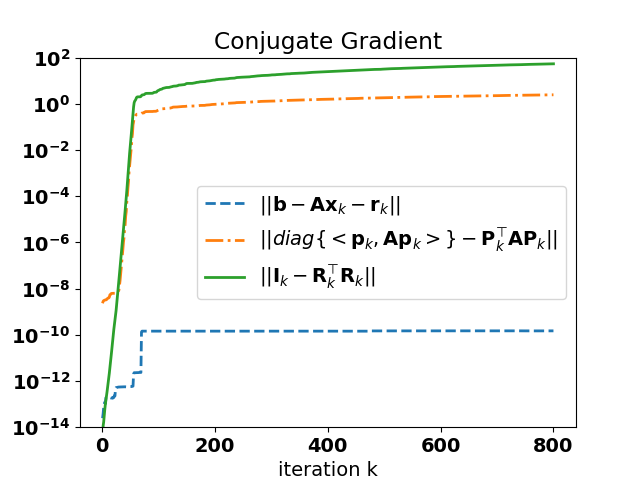}
			\caption*{CG}
		\end{subfigure}
		\begin{subfigure}{0.32\textwidth}
			\includegraphics[width=\linewidth]{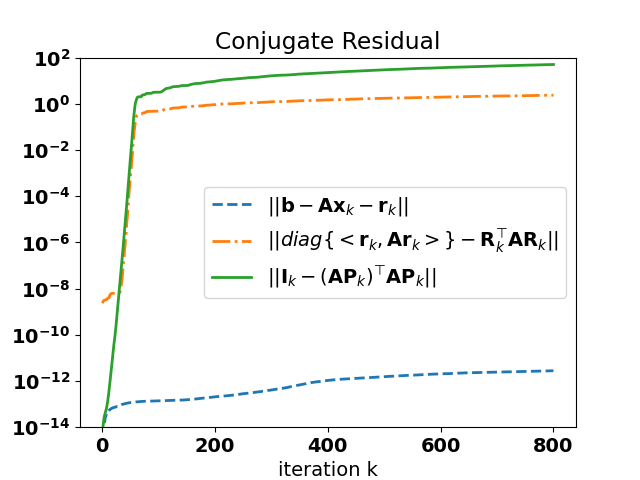}
			\caption*{CR}
		\end{subfigure}
		\begin{subfigure}{0.32\textwidth}
			\includegraphics[width=\linewidth]{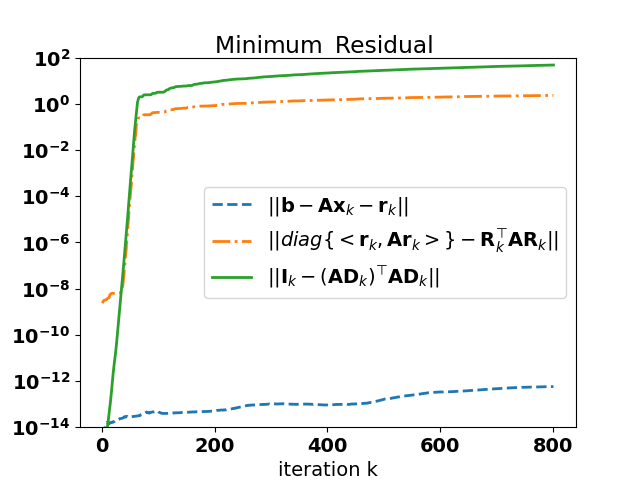}
			\caption*{MINRES}
		\end{subfigure}
		\caption{Experiments for \cref{sec:comparison}  to compare numerical (in)stability among CG, CR, and MINRES. The matrix is indefinite with $d = 1000$ and $m = 800$. While all methods exhibit some loss of orthogonality and conjugacy in the relevant vectors, CR and MINRES show slightly more stability as measured by $\|(\f{b} - \f{Ax}_k) - \f{r}_k\|$. As in \cref{fig:cgexplode_10p,fig:cgexplode_indefinite}, the presence of negative eigenvalues appears to induce a form of oscillatory behavior in CG and the depicted quantities do not exhibit the pronounced growth observed in \cref{fig:comparison_psd}.}
		\label{fig:comparison_indefinite}
	\end{figure}
	
	\begin{figure}[htbp]
		\centering
		\begin{subfigure}{0.32\textwidth}
			\includegraphics[width=\linewidth]{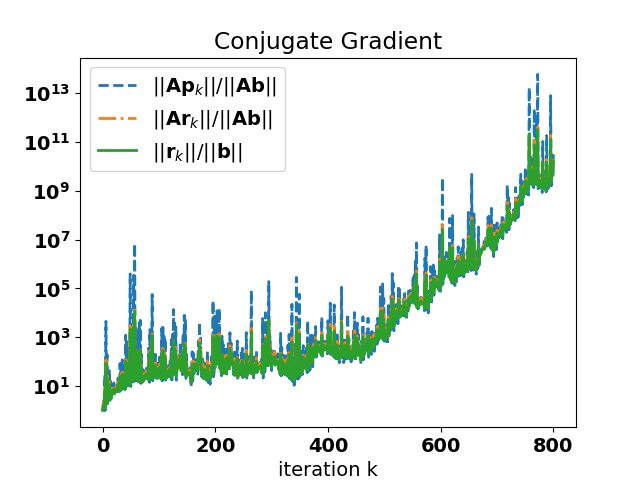}
		\end{subfigure}
		\begin{subfigure}{0.32\textwidth}
			\includegraphics[width=\linewidth]{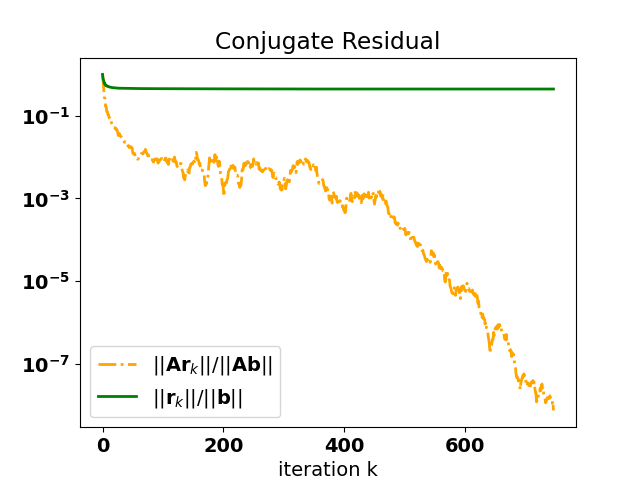}
		\end{subfigure}
		\begin{subfigure}{0.32\textwidth}
			\includegraphics[width=\linewidth]{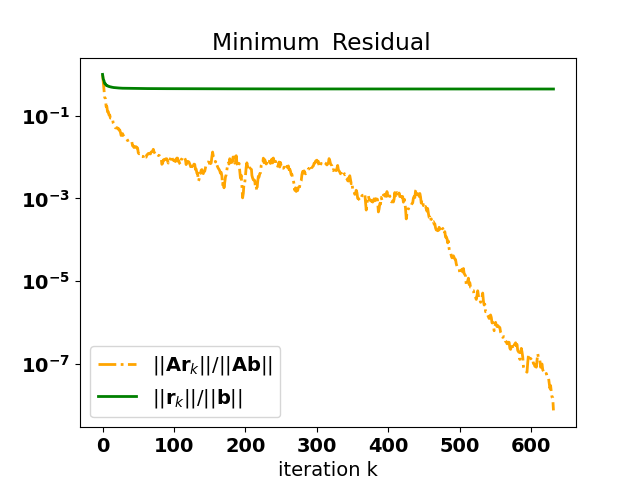}
		\end{subfigure}
		\begin{subfigure}{0.32\textwidth}
			\includegraphics[width=\linewidth]{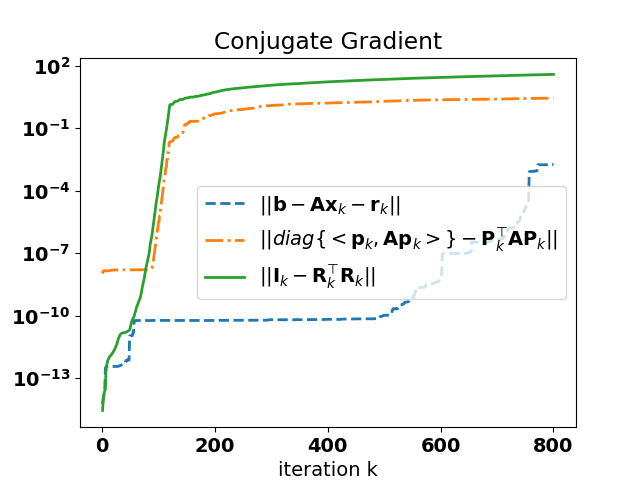}
			\caption*{CG}
		\end{subfigure}
		\begin{subfigure}{0.32\textwidth}
			\includegraphics[width=\linewidth]{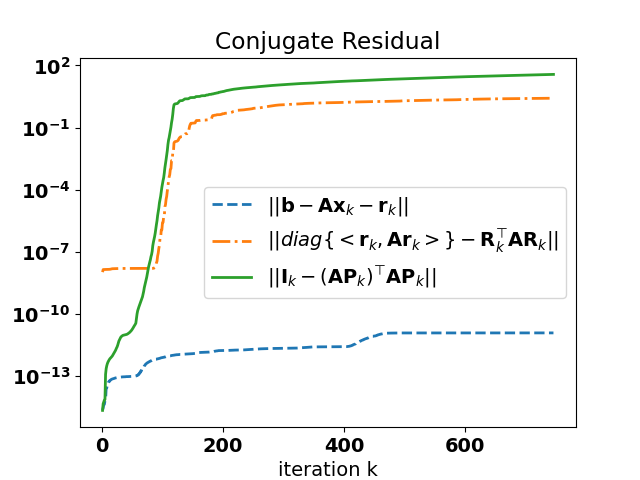}
			\caption*{CR}
		\end{subfigure}
		\begin{subfigure}{0.32\textwidth}
			\includegraphics[width=\linewidth]{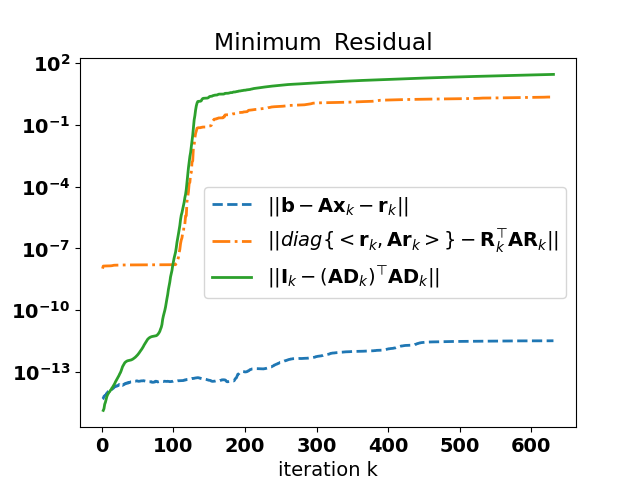}
			\caption*{MINRES}
		\end{subfigure}
		\caption{Experiments for \cref{sec:comparison}  to compare numerical (in)stability among CG, CR, and MINRES. The matrix is slightly indefinite with $d = 1000$, $m = 800$, and only $ 80 $ negative diagonal entries. While all methods exhibit some loss of orthogonality and conjugacy in the relevant vectors, CR and MINRES show slightly more stability as measured by $\|(\f{b} - \f{Ax}_k) - \f{r}_k\|$. In fact, they both converge to a solution with a relatively small residual of the normal equation. Again, the presence of negative eigenvalues appears to induce a form of oscillatory behavior in CG. Yet, compared with \cref{fig:comparison_indefinite} where the matrix contained more negative eigenvalues, the depicted quantities for CG exhibit a more pronounced growth.}
		\label{fig:comparison_10}
	\end{figure}
	
	\subsection{Real-world Applications}\label{sec:real-life}
	In this section, we now evaluate the behavior of CG, CR, and MINRES in some real-world applications. Namely, we consider problems from the numerical solution of partial differential equations (\cref{sec:possion}), computer vision (\cref{sec:image}), and statistical modeling (\cref{sec:kernel}). 
	
	\subsubsection{Poisson Equation with Neumann Boundary Condition}\label{sec:possion}
	Consider numerically solving a 2D Poisson equation with pure Neumann boundary conditions,
	\begin{align*}
		-\Delta u & = f(x, y) \quad \text{where} \quad (x, y) \in \Omega\\
		\lr{\nabla u, \f{n}} & = g(x,y) \quad \text{where} \quad (x, y) \in \partial\Omega,
	\end{align*}
	where $u : \Omega \subseteq \mathbb{R}^2 \to \mathbb{R}$ is the unknown function. Here, $\Delta u$, $\nabla u$ and $\f{n}$ are the Laplace operator of $u$, the gradient of $u$, and the normal vector to the boundary $\partial\Omega$ (the exterior surface), respectively. We use a five-point stencil on uniform staggered grids to formulate the finite difference of the Laplace operator of $u$. The resulting Laplacian matrix of this particular problem is known to be PSD and rank deficient \cite{yoon2016solving,roosta2015randomized}.
	
	In our experiments, we generate $f$ and $g$ by letting $u(x,y) = \sin(\sqrt{x^2 + y^2})$ as the true, and considered unknown, underlying function; see \cref{fig:pde_original}. The domain is centered at $(0.001,0.001)$ with size $(x,y) \in \Omega \triangleq [-10 + 0.001, 10 + 0.001] \times [-10 + 0.001, 10 + 0.001]$. The slight off-center positioning is attributed to the undefined derivatives of $u$ at $(0,0)$. We partition the domain into grids of size $512 \times 512$, resulting in a sparse Laplace matrix with dimensions $(512 + 1)^2 \times (512 + 1)^2$. The maximum number of iterations and the termination condition are set to $2,000$ and $\|\f{Ar}_k\| / \|\f{Ab}\| \leq 10^{-10}$, respectively. 
	
	The results are depicted in \cref{fig:pde_cgcrmr,fig:pde_cg_best}. In \cref{fig:pde_cgcrmr}, unlike CR and MINRES, CG fails to attain the threshold required for the termination condition, prior to reaching the maximum number of iterations. Not only, do both CR and MINRES perform identically, but also they obtain a solution with significantly superior quality compared to that obtained from CG upon termination. \cref{fig:pde_cg_best} illustrates the quality of the recovered solutions from all methods at iteration 359, the point where CG achieves its lowest residual. While the residuals for CR and MINRES are lower than those for CG at this stage, the recovered solution for them exhibits a larger error.

	\begin{figure}[htbp]
		\centering
		\begin{subfigure}{0.4\textwidth}
			\includegraphics[width=\linewidth]{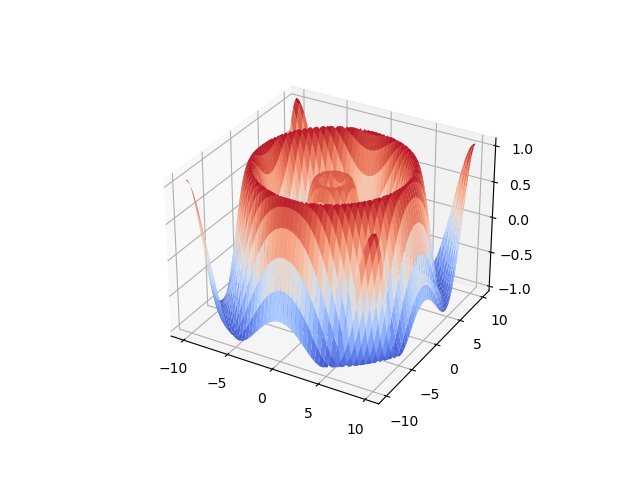}
		\end{subfigure}
		\begin{subfigure}{0.4\textwidth}
			\includegraphics[width=\linewidth]{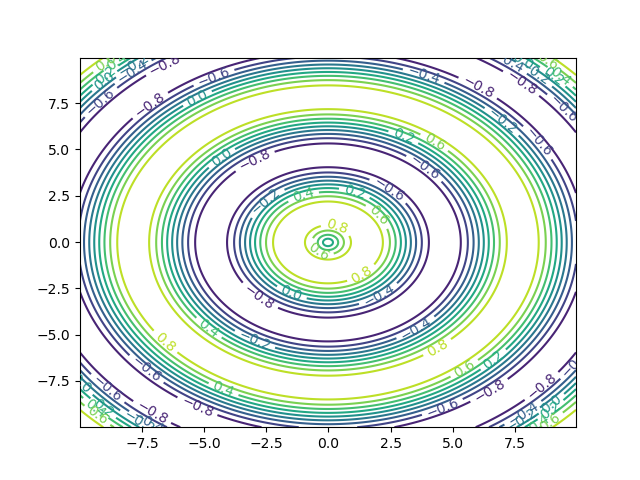}
		\end{subfigure}
		\caption{The true, yet unknown, function $u(x,y) = \sin(\sqrt{x^2 + y^2})$ for generating the experiments of \cref{sec:possion}.}
		\label{fig:pde_original}
	\end{figure}
	
	\begin{figure}[htbp]
		\centering
		\begin{subfigure}{0.32\textwidth}
			\includegraphics[width=\linewidth]{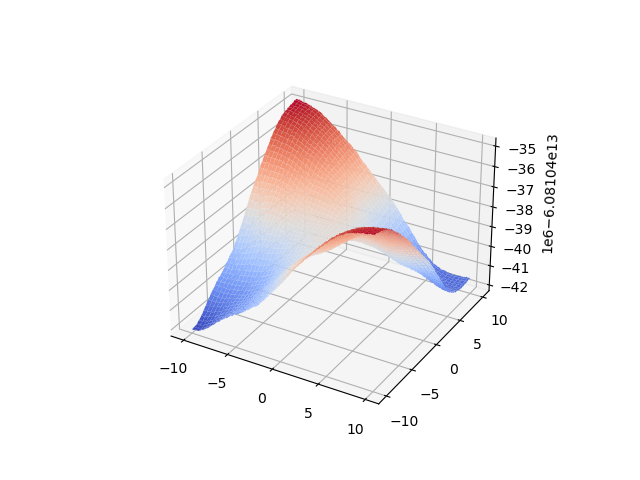}
		\end{subfigure}
		\begin{subfigure}{0.32\textwidth}
			\includegraphics[width=\linewidth]{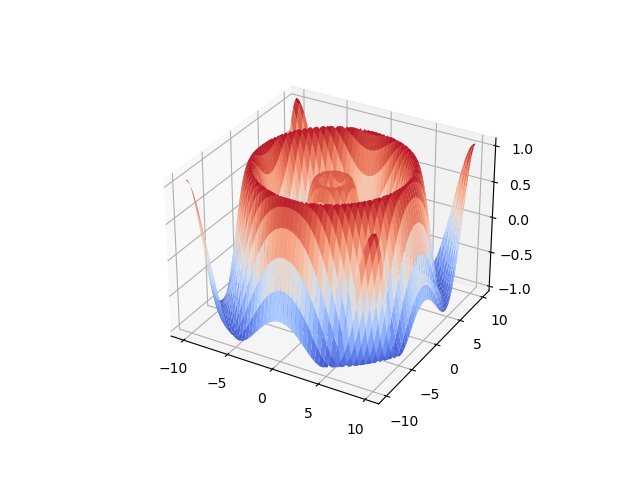}
		\end{subfigure}
		\begin{subfigure}{0.32\textwidth}
			\includegraphics[width=\linewidth]{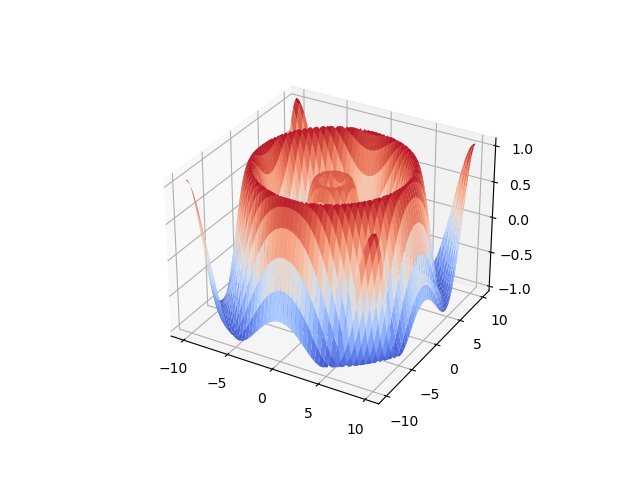}
		\end{subfigure}
		\begin{subfigure}{0.32\textwidth}
			\includegraphics[width=\linewidth]{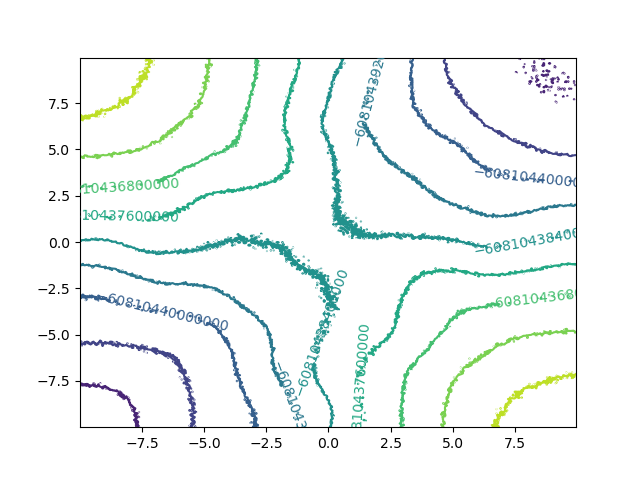}
		\end{subfigure}
		\begin{subfigure}{0.32\textwidth}
			\includegraphics[width=\linewidth]{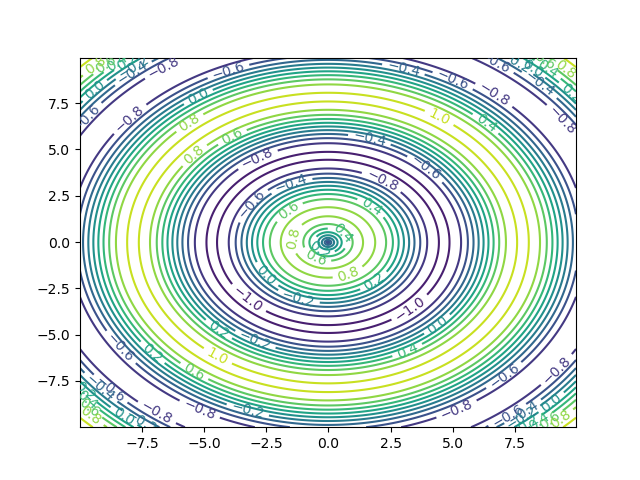}
		\end{subfigure}
		\begin{subfigure}{0.32\textwidth}
			\includegraphics[width=\linewidth]{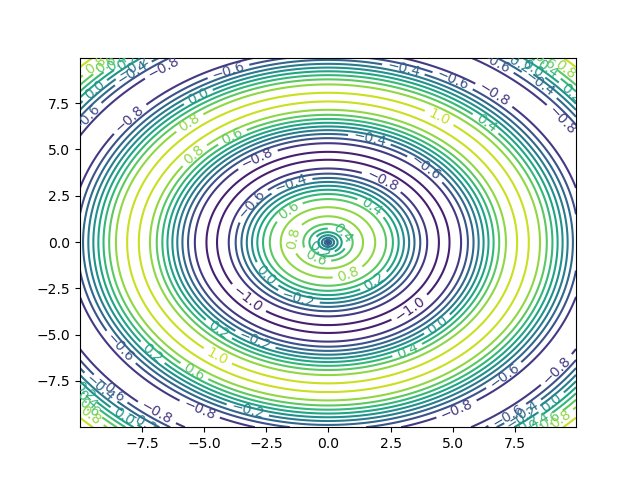}
		\end{subfigure}
		\begin{subfigure}{0.32\textwidth}
			\includegraphics[width=\linewidth]{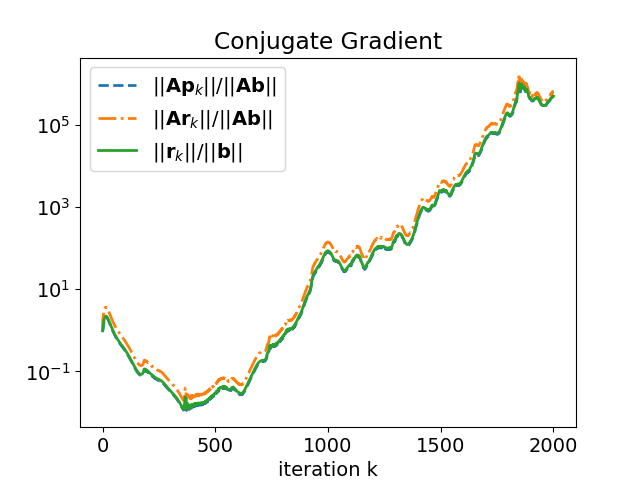}
			\caption*{CG}
		\end{subfigure}
		\begin{subfigure}{0.32\textwidth}
			\includegraphics[width=\linewidth]{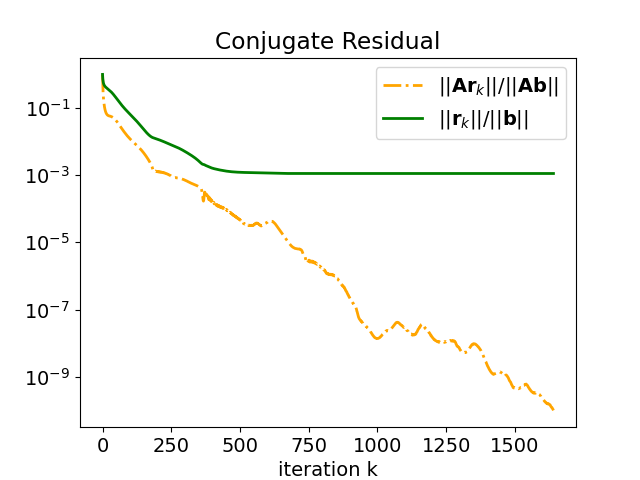}
			\caption*{CR}
		\end{subfigure}
		\begin{subfigure}{0.32\textwidth}
			\includegraphics[width=\linewidth]{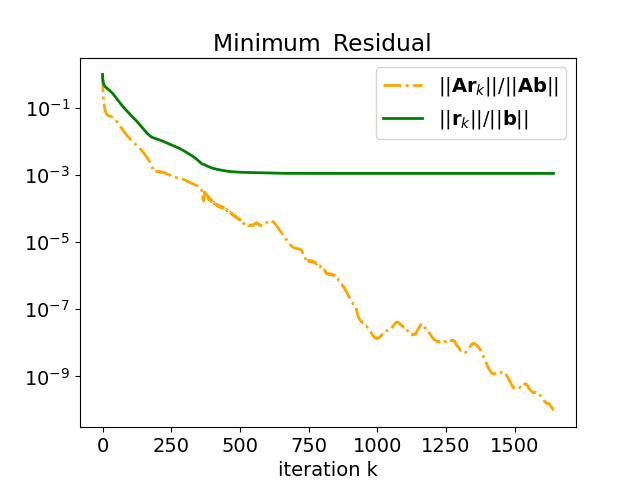}
			\caption*{MINRES}
		\end{subfigure}
		\caption{Experiments for \cref{sec:possion}. Both CR and MINRES behave identically and terminate at iteration 1640 with $\|\f{r}_k\|/\|\f{b}\| \approx 1.12 \times 10^{-3}$, $\|\f{Ar}_k\| / \|\f{Ab}\| \approx 9.91 \times 10^{-11}$ and $\|\f{x}_{\text{true}} - \f{x}_k\| / \|\f{x}_{\text{true}}\| \approx 0.0841$. CG never attains the termination condition threshold prior to reaching the maximum number of iterations. At termination, CG obtains $\|\f{r}_k\| / \|\f{b}\| \approx 5.06 \times 10^{5}$, $\|\f{Ar}_k\| / \|\f{Ab}\| \approx 6.43 \times 10^{5}$, $\|\f{Ap}_k\| / \|\f{Ab}\| \approx 4.9 \times 10^{5}$ and $\|\f{x}_{\text{true}} - \f{x}_k\| / \|\f{x}_{\text{true}}\| \approx 8.7\times 10^{13}$.}
		\label{fig:pde_cgcrmr}
	\end{figure}
	
	\begin{figure}[htbp]
		\centering
		\begin{subfigure}{0.32\textwidth}
			\includegraphics[width=\linewidth]{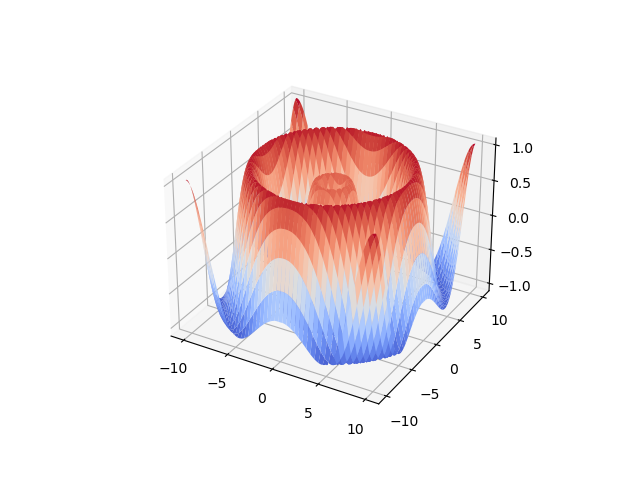}
		\end{subfigure}
		\begin{subfigure}{0.32\textwidth}
			\includegraphics[width=\linewidth]{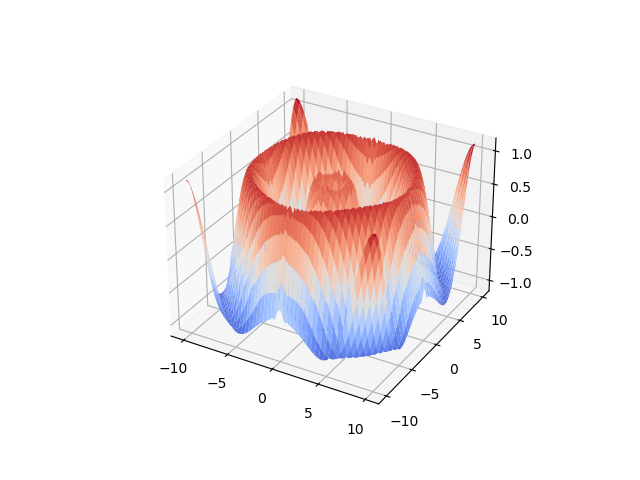}
		\end{subfigure}
		\begin{subfigure}{0.32\textwidth}
			\includegraphics[width=\linewidth]{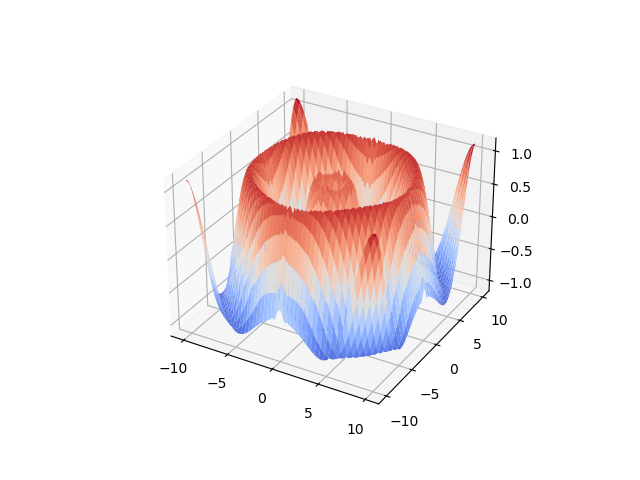}
		\end{subfigure}
		\begin{subfigure}{0.32\textwidth}
			\includegraphics[width=\linewidth]{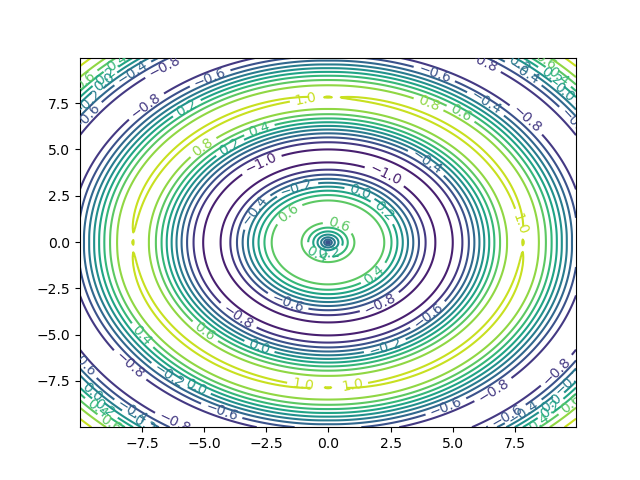}
		\end{subfigure}
		\begin{subfigure}{0.32\textwidth}
			\includegraphics[width=\linewidth]{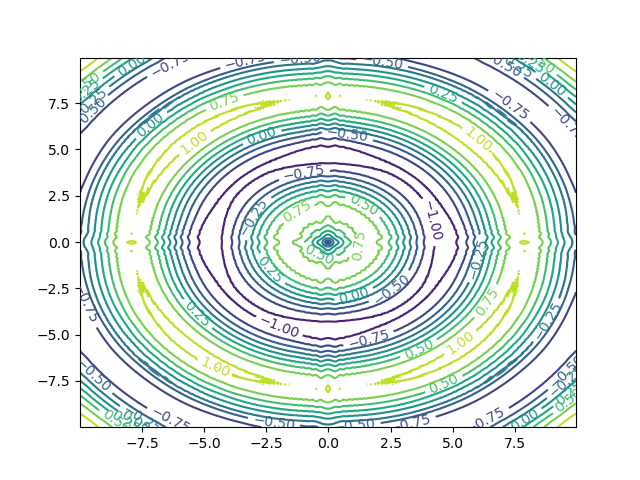}
		\end{subfigure}
		\begin{subfigure}{0.32\textwidth}
			\includegraphics[width=\linewidth]{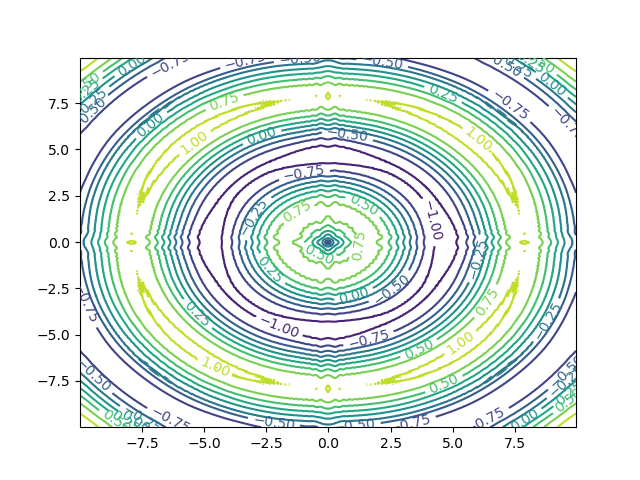}
		\end{subfigure}
		\caption{Experiments for \cref{sec:possion}. CG attains its minimum relative residual at iteration 359 and the recovered solutions for all three methods at this specific iteration are depicted. At this iteration, CG obtains $\|\f{r}_k\| / \|\f{b}\| \approx 1.22 \times 10^{-2}$, $\|\f{Ar}_k\| / \|\f{Ab}\| \approx 1.98 \times 10^{-2}$, $\|\f{Ap}_k\| / \|\f{Ab}\| \approx 1.15 \times 10^{-2}$ and $\|\f{x}_{\text{true}} - \f{x}_k\| / \|\f{x}_{\text{true}}\| \approx 0.0992$, while for CR and MINRES we observe $\|\f{r}_k\| / \|\f{b}\| \approx 2.25 \times 10^{-3}$, $\|\f{Ar}_k\| / \|\f{Ab}\| \approx 3.92 \times 10^{-4}$ and $\|\f{x}_{\text{true}} - \f{x}_k\| / \|\f{x}_{\text{true}}\| \approx 0.1212$.}
		\label{fig:pde_cg_best}
	\end{figure}
	
	\subsubsection{Gaussian Smoothing}\label{sec:image}
	Next, we consider the behavior of CG, CR, and MINRES in the context of an image deblurring problem \cite{linda2001shapiro}. For a given image $\f{x}$, we apply a linear Gaussian smoothing / blurring operator $\f{A}$, as a 2D convolutional operator, on the image and add some noise $\f{e}$ sampled from a uniform distribution, i.e., $\f{b} = \f{A}\f{x} + \f{e}$. We follow a similar experimental setup as the numerical experiment in the work\cite{liu2023obtaining}. The true image and blurry version are given in \cref{fig:gauss_original}. We remind that though $ \f{A} $ typically has full rank, it often contains several small singular values, which make it almost numerically singular. We do not use any particular regularization and simply apply CG, CR and MINRES to attempt to recover the original image. The maximum number of iterations and the termination condition are set to $300$ and $\|\f{Ar}_k\| / \|\f{Ab}\| \leq 10^{-5}$, respectively. 
	
	The results are presented in \cref{fig:gauss_stat,fig:gauss_cg_best}. We stress that even when the matrix $\f{A}$ is PD with a few small eigenvalues, it can be nearly numerically singular. Consequently, CG can exhibit behavior similar to that observed in inconsistent systems and may fail to achieve the required termination threshold before reaching the maximum number of iterations. In contrast, both CR and MINRES, as evident from \cref{fig:gauss_stat,fig:gauss_cg_best}, demonstrate greater stability and terminate once $\|\f{Ar}_k\|/\|\f{Ab}\| \leq 10^{-5}$. Clearly, the image recovered by CG at termination is of significantly lower quality compared to those obtained by CR and MINRES. Similar to the previous section, \cref{fig:gauss_cg_best} showcases the best performance of CG — the image recovered by CG at the iteration with the smallest relative residual — compared with the images from CR and MINRES at the same iteration. At this particular iteration, while CG produces a slightly improved image, it remains visibly blurry with a larger relative error than those obtained by CR/MINRES at termination.
	\begin{figure}[htbp]
		\centering
		\includegraphics[width=0.4\linewidth]{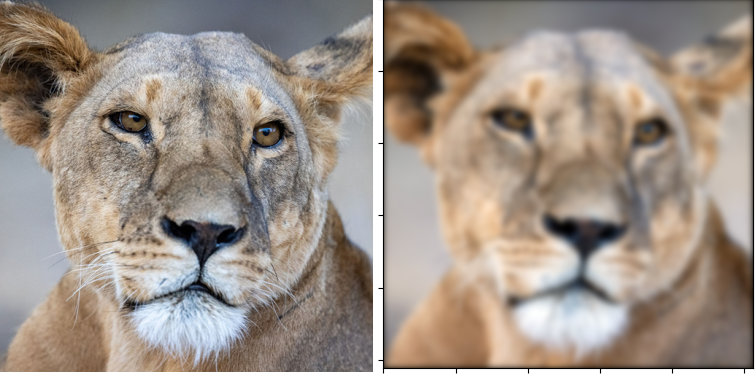}
		\caption{The true and the noisy blurred images used for the experiments of \cref{sec:image}.}
		\label{fig:gauss_original}
	\end{figure}
	\begin{figure}[htbp]
		\centering
		\begin{subfigure}{0.32\textwidth}
			\includegraphics[width=\linewidth]{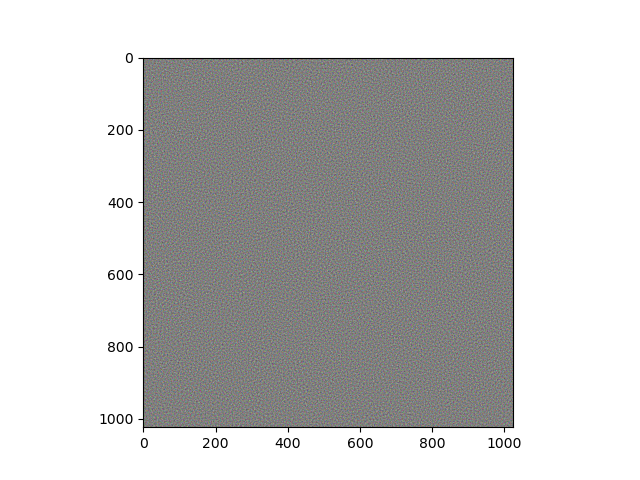}
		\end{subfigure}
		\begin{subfigure}{0.32\textwidth}
			\includegraphics[width=\linewidth]{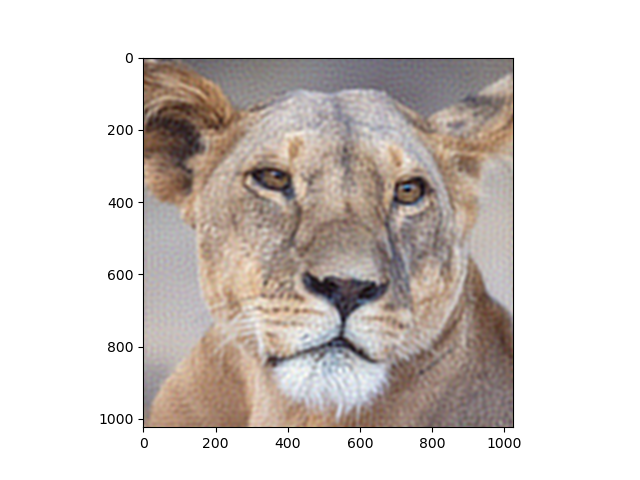}
		\end{subfigure}
		\begin{subfigure}{0.32\textwidth}
			\includegraphics[width=\linewidth]{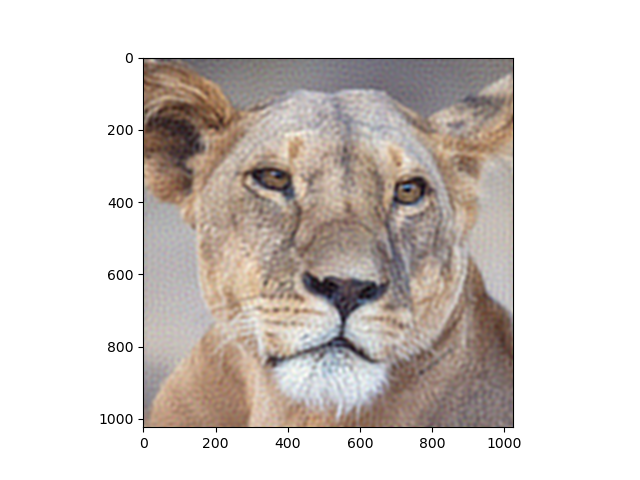}
		\end{subfigure}
		\begin{subfigure}{0.32\textwidth}
			\includegraphics[width=\linewidth]{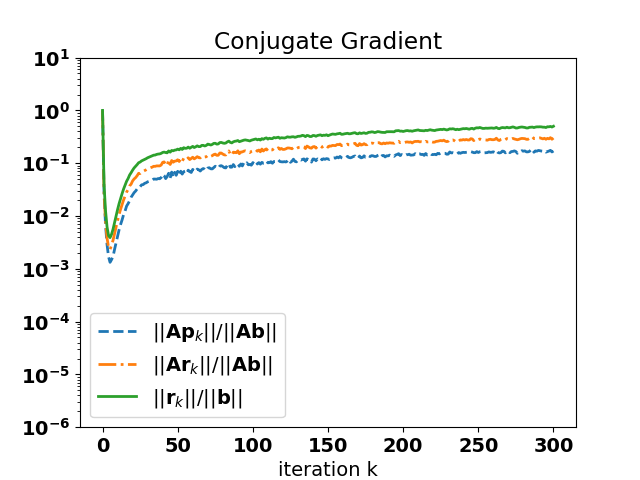}
			\caption*{CG}
		\end{subfigure}
		\begin{subfigure}{0.32\textwidth}
			\includegraphics[width=\linewidth]{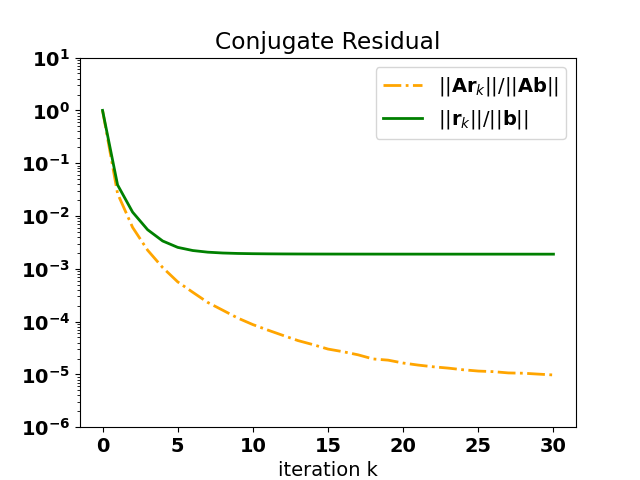}
			\caption*{CR}
		\end{subfigure}
		\begin{subfigure}{0.32\textwidth}
			\includegraphics[width=\linewidth]{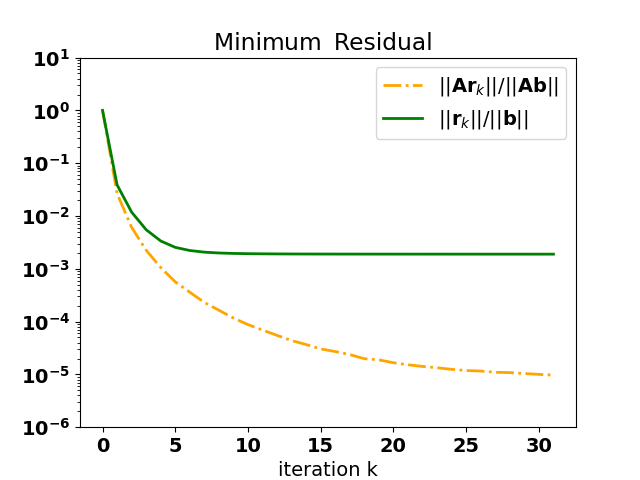}
			\caption*{MR}
		\end{subfigure}
		\caption{Experiments for \cref{sec:image}. The quantities plotted correspond to the red channel of the images. CG terminates at iteration 300 with $\|\f{r}_k\|/\|\f{b}\| \approx 0.511$, $\|\f{Ar}_k\| / \|\f{Ab}\| \approx 0.323$, $\|\f{Ap}_k\| / \|\f{Ab}\| \approx 0.184$ and $\|\f{x}_{\text{true}} - \f{x}_k\| / \|\f{x}_{\text{true}}\| \approx 1.8 \times 10^{5}$. CR and MINRES both satisfy the termination condition at iteration 30 with $\|\f{r}_k\|/\|\f{b}\| \approx 1.89 \times 10^{-3}$, $\|\f{Ar}_k\| / \|\f{Ab}\| \approx 9.76 \times 10^{-6}$ and $\|\f{x}_{\text{true}} - \f{x}_k\| / \|\f{x}_{\text{true}}\| \approx 0.2376$.}
		\label{fig:gauss_stat}
	\end{figure}
	
	\begin{figure}[htbp]
		\centering
		\begin{subfigure}{0.32\textwidth}
			\includegraphics[width=\linewidth]{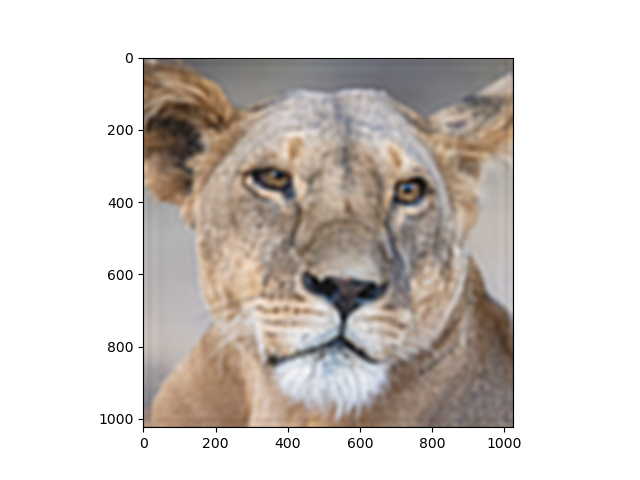}
		\end{subfigure}
		\begin{subfigure}{0.32\textwidth}
			\includegraphics[width=\linewidth]{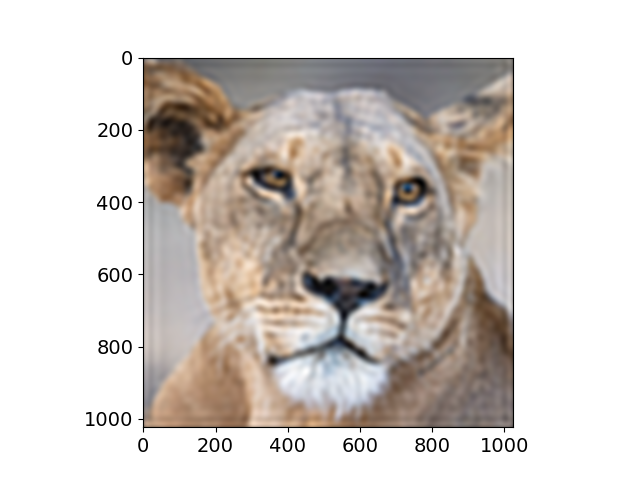}
		\end{subfigure}
		\begin{subfigure}{0.32\textwidth}
			\includegraphics[width=\linewidth]{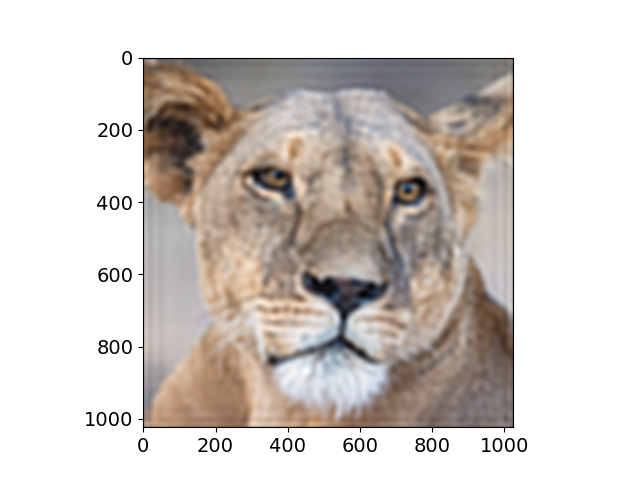}
		\end{subfigure}
		\caption{Experiments for \cref{sec:image}. CG attains its minimum relative residual at iteration 5 and the recovered solutions for all three methods at this specific iteration are depicted. At this iteration, CG obtains $\|\f{r}_k\|/\|\f{b}\| \approx 3.9 \times 10^{-3}$, $\|\f{Ar}_k\| / \|\f{Ab}\| \approx 2.33 \times 10^{-3}$, $\|\f{Ap}_k\| / \|\f{Ab}\| \approx 1.33\times 10^{-3}$ and $\|\f{x}_{\text{true}} - \f{x}_k\| / \|\f{x}_{\text{true}}\| \approx 0.249$, while for CR and MINRES, we observe $\|\f{r}_k\|/\|\f{b}\| \approx 2.55 \times 10^{-3}$, $\|\f{Ar}_k\| / \|\f{Ab}\| \approx 5.66 \times 10^{-4}$ and $\|\f{x}_{\text{true}} - \f{x}_k\| / \|\f{x}_{\text{true}}\| \approx 0.2562$.}
		\label{fig:gauss_cg_best}
	\end{figure}
	
	\subsubsection{Kernel Regression}\label{sec:kernel}
	Finally, we consider kernel regression \cite{hastie2009elements}, which appears frequently in machine learning applications. Kernel regression is an example of non-parametric regression where given training data $ \mathcal{D} = \{\f{a}_{i}, b_{i}\}_{i=1}^{n} \subset \mathbb{R}^{p} \times \mathbb{R} $, we aim to find a prediction function $f: \mathbb{R}^{p} \to \real$ that is a solution to the following minimization problem
	\begin{align*}
		\min_{f \in \mathcal{H}} \left(f(\f{a}_{i})  - b_{i}\right)^{2},
	\end{align*}
	where $ \mathcal{H} $ is some appropriate reproducing kernel Hilbert space (RKHS) with the kernel function $ \kappa:\real^{p} \times \real^{p} \to \real $ \cite{murphy2023probabilistic}. It can be shown that any solution to the above problem can be written as $ f = g + h $ where $ g  = \sum_{j = 1}^{n}x_{j}\kappa(\f{a}_{j},.)$ and $h \perp \Span\{\kappa(\f{a}_{j},.)\}_{j=1}^{n}$, leading to the least-squares regression problem
	\begin{align*}
		\min_{\f{x} \in \mathbb{R}^n} \vnorm{\f{b} - \f{A}\f{x}}^{2},
	\end{align*}
	where $\f{A} \in \mathbb{R}^{n \times n}$ is the positive (semi-)definite kernel matrix such that $ \f{A}_{i,j} = \kappa(\f{a}_{i},\f{a}_{j}) $ and $ \f{b} \in \real^{n}  $ with $ \f{b}_i = b_{i} $. 
	
	We use \texttt{Wine} \texttt{Quality} dataset, comprising $6,497$ training examples with $p = 10$ \cite{cortez2009using}.  The dataset is randomly split into $80\%$ training (thus, $n = 5,197$) and $20\%$ validation sets. Employing the radial basis function kernel $\kappa(\f{a}, \f{a}^{\prime}) = \exp{(-L\|\f{a} - \f{a}^{\prime}\|_2^2)}$ with  $L = 10^{-4}$ results in the positive semi-definite kernel matrix $\f{A}$ of rank $438$. 
	The maximum number of iterations and the termination condition are set to $2,000$ and $\|\f{Ar}_k\| / \|\f{Ab}\| \leq 10^{-7}$, respectively. To assess the predictive performance of the recovered model, we measure the mean squared error (MSE) for the test set at each iteration. The results, depicted in \cref{fig:kernel_stat}, clearly show that both CR and MINRES greatly outperform CG, not only in metrics related to the training set but also in terms of predictive performance as measured by the MSE on the test set.
	
	\begin{figure}[htbp]
		\centering
		\begin{subfigure}{0.32\textwidth}
			\includegraphics[width=\linewidth]{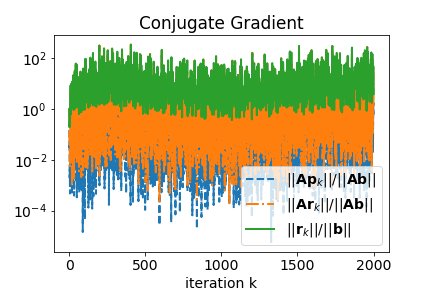}
		\end{subfigure}
		\begin{subfigure}{0.32\textwidth}
			\includegraphics[width=\linewidth]{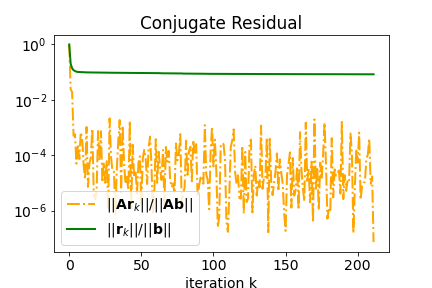}
		\end{subfigure}
		\begin{subfigure}{0.32\textwidth}
			\includegraphics[width=\linewidth]{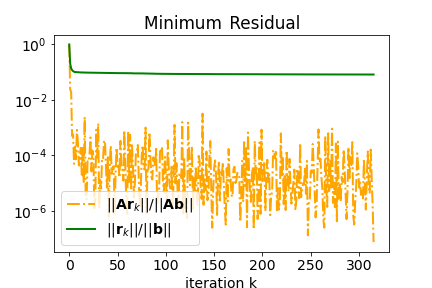}
		\end{subfigure}
		\begin{subfigure}{0.32\textwidth}
			\includegraphics[width=\linewidth]{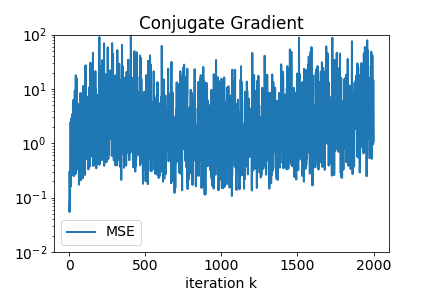}
		\end{subfigure}
		\begin{subfigure}{0.32\textwidth}
			\includegraphics[width=\linewidth]{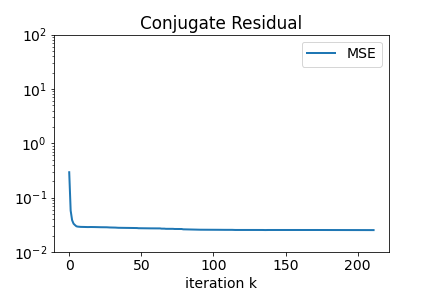}
		\end{subfigure}
		\begin{subfigure}{0.32\textwidth}
			\includegraphics[width=\linewidth]{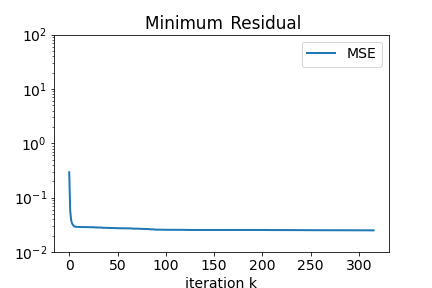}
		\end{subfigure}
		\caption{Experiments for \cref{sec:kernel}. CR and MINRES both attain $\|\f{r}_k\|/\|\f{b}\| \approx 0.0835$, $\|\f{Ar}_k\| / \|\f{Ab}\| \approx 7.27 \times 10^{-8}$ and the validation MSE of 0.025 at 211 and 315 iterations. Whereas, for CG, $\|\f{r}_k\|/\|\f{b}\| \approx 3.81$, $\|\f{Ar}_k\| / \|\f{Ab}\| \approx 0.976$, $\|\f{Ap}_k\| / \|\f{Ab}\| \approx 0.596$ and the validation MSE is 1.1 at iteration 2000.}
		\label{fig:kernel_stat}
	\end{figure}

    \section{Conclusion}
    We theoretically and empirically studied the behavior of CG and CR, as two prominent examples of the conjugate directions methods, for solving symmetric but inconsistent systems. We established a fundamental limitation in classical CG in terms of its inability to recover any solution to the normal equation. To remedy this, we proposed a modification that ensures convergence to the pseudo-inverse solution.
    We then showed that a minor modification to CR at the very last iteration ensures convergence to the pseudo-inverse solution. We further established that CR and MINRES are both essentially equivalent in that they, mathematically speaking, generate the same sequence of iterates. Lastly, we explored the empirical performance of CG, CR, and MINRES in a variety of settings. Contrary to conventional wisdom and in stark contrast to CR/MINRES, we showcased that CG can exhibit numerical instability in a borderline catastrophic manner across scenarios involving inconsistent systems. 

	\bibliographystyle{plain}
	\bibliography{ref}

\begin{thebibliography}{10}

\bibitem{bjorck2015numerical}
{\AA}ke Bj{\"o}rck.
\newblock {\em Numerical methods in matrix computations}.
\newblock Springer, 2015.

\bibitem{calvetti1994conjugate}
D~Calvetti, L~Reichel, and Q~Zhang.
\newblock Conjugate gradient algorithms for symmetric inconsistent linear systems.
\newblock In {\em Proceedings of the Cornelius Lanczos International Centenary Conference, SIAM, Philadelphia}, pages 267--272. SIAM, 1994.

\bibitem{calvetti2001choice}
Daniela Calvetti, Bryan Lewis, and Lothar Reichel.
\newblock On the choice of subspace for iterative methods for linear discrete ill-posed problems.
\newblock {\em International Journal of Applied Mathematics and Computer Science}, 11(5):1069--1092, 2001.

\bibitem{choi2006iterative}
Sou-Cheng Choi.
\newblock Iterative methods for singular linear equations and least-squares problems.
\newblock ICME, Stanford University, 2006.

\bibitem{cortez2009using}
Paulo Cortez, Juliana Teixeira, Ant{\'o}nio Cerdeira, Fernando Almeida, Telmo Matos, and Jos{\'e} Reis.
\newblock Using data mining for wine quality assessment.
\newblock In {\em Discovery Science: 12th International Conference, DS 2009, Porto, Portugal, October 3-5, 2009 12}, pages 66--79. Springer, 2009.

\bibitem{dahito2019conjugate}
Marie-Ange Dahito and Dominique Orban.
\newblock The conjugate residual method in linesearch and trust-region methods.
\newblock {\em SIAM Journal on Optimization}, 29(3):1988--2025, 2019.

\bibitem{estrin2019euclidean}
Ron Estrin, Dominique Orban, and Michael Saunders.
\newblock Euclidean-norm error bounds for symmlq and cg.
\newblock {\em SIAM Journal on Matrix Analysis and Applications}, 40(1):235--253, 2019.

\bibitem{fasano2005planar}
Giovanni Fasano.
\newblock Planar conjugate gradient algorithm for large-scale unconstrained optimization, part 1: theory.
\newblock {\em Journal of Optimization Theory and Applications}, 125(3):523--541, 2005.

\bibitem{fasano2005planar_app}
Giovanni Fasano.
\newblock Planar conjugate gradient algorithm for large-scale unconstrained optimization, part 2: application.
\newblock {\em Journal of Optimization Theory and Applications}, 125(3):543--558, 2005.

\bibitem{fasano2007lanczos}
Giovanni Fasano.
\newblock Lanczos conjugate-gradient method and pseudoinverse computation on indefinite and singular systems.
\newblock {\em Journal of optimization theory and applications}, 132(2):267--285, 2007.

\bibitem{fischer1996note}
Bernd Fischer, Martin Hanke, and Marlis Hochbruck.
\newblock A note on conjugate-gradient type methods for indefinite and/or inconsistent linear systems.
\newblock {\em Numerical Algorithms}, 11:181--187, 1996.

\bibitem{fletcher2006conjugate}
Roger Fletcher.
\newblock Conjugate gradient methods for indefinite systems.
\newblock In {\em Numerical Analysis: Proceedings of the Dundee Conference on Numerical Analysis, 1975}, pages 73--89. Springer, 2006.

\bibitem{fong2011minimum}
David Chin-lung Fong.
\newblock {\em Minimum-Residual Methods for Sparse Least-Squares Using Golub-Kahan Bidiagonalization}.
\newblock Stanford University, 2011.

\bibitem{fong2012cg}
David Chin-Lung Fong and Michael Saunders.
\newblock Cg versus minres: An empirical comparison.
\newblock {\em Sultan Qaboos University Journal for Science [SQUJS]}, 17(1):44--62, 2012.

\bibitem{greenbaum1997iterative}
Anne Greenbaum.
\newblock {\em Iterative methods for solving linear systems}.
\newblock SIAM, 1997.

\bibitem{hanke2017conjugate}
Martin Hanke.
\newblock {\em Conjugate gradient type methods for ill-posed problems}.
\newblock Chapman and Hall/CRC, 2017.

\bibitem{hastie2009elements}
Trevor Hastie, Robert Tibshirani, Jerome~H Friedman, and Jerome~H Friedman.
\newblock {\em The elements of statistical learning: data mining, inference, and prediction}, volume~2.
\newblock Springer, 2009.

\bibitem{hayami2001behaviour}
Ken Hayami.
\newblock On the behaviour of the conjugate residual method for singular systems.
\newblock {\em National Institute of Informatics}, 2001.

\bibitem{hayami2018convergence}
Ken Hayami.
\newblock Convergence of the conjugate gradient method on singular systems.
\newblock {\em arXiv preprint arXiv:1809.00793}, 2018.

\bibitem{hayami2004convergence}
Ken Hayami and Masaaki Sugihara.
\newblock {On the convergence of the GCR(k) method for singular systems}.
\newblock {\em National Institute of Informatics}, 2004.

\bibitem{hayami2011geometric}
Ken Hayami and Masaaki Sugihara.
\newblock A geometric view of krylov subspace methods on singular systems.
\newblock {\em Numerical Linear Algebra with Applications}, 18(3):449--469, 2011.

\bibitem{hestenes1952methods}
Magnus~R Hestenes and Eduard Stiefel.
\newblock Methods of conjugate gradients for solving linear systems.
\newblock {\em Journal of Research of the National Bureau of Standards}, 49(6):409--436, 1952.

\bibitem{kaasschieter1988preconditioned}
Erik~F Kaasschieter.
\newblock Preconditioned conjugate gradients for solving singular systems.
\newblock {\em Journal of Computational and Applied mathematics}, 24(1-2):265--275, 1988.

\bibitem{lim2023complexity}
Alexander Lim and Fred Roosta.
\newblock {Complexity Guarantees for Nonconvex Newton-MR Under Inexact Hessian Information}.
\newblock {\em arXiv preprint arXiv:2308.09912}, 2023.

\bibitem{linda2001shapiro}
G~Linda and C~George.
\newblock {\em Computer Vision}.
\newblock Prentice-Hall, New Jersey, 2001.

\bibitem{liu2023obtaining}
Yang Liu, Andre Milzarek, and Fred Roosta.
\newblock {Obtaining Pseudo-inverse Solutions With MINRES}.
\newblock {\em arXiv preprint arXiv:2309.17096}, 2023.

\bibitem{liu2022newton}
Yang Liu and Fred Roosta.
\newblock {A Newton-MR algorithm with complexity guarantees for nonconvex smooth unconstrained optimization}.
\newblock {\em arXiv preprint arXiv:2208.07095}, 2022.

\bibitem{liu2022minres}
Yang Liu and Fred Roosta.
\newblock Minres: From negative curvature detection to monotonicity properties.
\newblock {\em SIAM Journal on Optimization}, 32(4):2636--2661, 2022.

\bibitem{luenberger1969hyperbolic}
David~G Luenberger.
\newblock Hyperbolic pairs in the method of conjugate gradients.
\newblock {\em SIAM Journal on Applied Mathematics}, 17(6):1263--1267, 1969.

\bibitem{luenberger1970conjugate}
David~G Luenberger.
\newblock The conjugate residual method for constrained minimization problems.
\newblock {\em SIAM Journal on Numerical Analysis}, 7(3):390--398, 1970.

\bibitem{mardal2011preconditioning}
Kent-Andre Mardal and Ragnar Winther.
\newblock Preconditioning discretizations of systems of partial differential equations.
\newblock {\em Numerical Linear Algebra with Applications}, 18(1):1--40, 2011.

\bibitem{murphy2023probabilistic}
Kevin~P Murphy.
\newblock {\em {Probabilistic machine learning: Advanced topics}}.
\newblock MIT press, 2023.

\bibitem{nocedal1999numerical}
Jorge Nocedal and Stephen~J Wright.
\newblock {\em Numerical optimization}.
\newblock Springer, 1999.

\bibitem{paige1975solution}
Christopher~C Paige and Michael~A Saunders.
\newblock Solution of sparse indefinite systems of linear equations.
\newblock {\em SIAM journal on numerical analysis}, 12(4):617--629, 1975.

\bibitem{rebjock2023fast}
Quentin Rebjock and Nicolas Boumal.
\newblock Fast convergence of trust-regions for non-isolated minima via analysis of cg on indefinite matrices.
\newblock {\em arXiv preprint arXiv:2311.07404}, 2023.

\bibitem{roosta2022newton}
Fred Roosta, Yang Liu, Peng Xu, and Michael~W Mahoney.
\newblock {Newton-MR: Inexact Newton method with minimum residual sub-problem solver}.
\newblock {\em EURO Journal on Computational Optimization}, 10:100035, 2022.

\bibitem{roosta2015randomized}
Farbod Roosta-Khorasani.
\newblock {\em Randomized algorithms for solving large scale nonlinear least squares problems}.
\newblock PhD thesis, University of British Columbia, Vancouver, BC Canada V6T 1Z4, 2015.

\bibitem{royer2020newton}
Cl{\'e}ment~W Royer, Michael O’Neill, and Stephen~J Wright.
\newblock A newton-cg algorithm with complexity guarantees for smooth unconstrained optimization.
\newblock {\em Mathematical Programming}, 180:451--488, 2020.

\bibitem{royer2018complexity}
Cl{\'e}ment~W Royer and Stephen~J Wright.
\newblock Complexity analysis of second-order line-search algorithms for smooth nonconvex optimization.
\newblock {\em SIAM Journal on Optimization}, 28(2):1448--1477, 2018.

\bibitem{saad2003iterative}
Yousef Saad.
\newblock {\em Iterative methods for sparse linear systems}.
\newblock SIAM, 2003.

\bibitem{shewchuk2007conjugate}
J~Shewchuk.
\newblock Conjugate gradient without the agonizing pain.
\newblock {\em Technical report}, 2007.

\bibitem{sogabe2022krylov}
Tomohiro Sogabe.
\newblock Krylov subspace methods for linear systems.
\newblock {\em Springer Series in Computational Mathematics}, 2022.

\bibitem{stiefel1955relaxationsmethoden}
Edvard Stiefel.
\newblock Relaxationsmethoden bester strategie zur l{\"o}sung linearer gleichungssysteme.
\newblock {\em Commentarii Mathematici Helvetici}, 29(1):157--179, 1955.

\bibitem{yoon2016solving}
Myoungho Yoon, Gangjoon Yoon, and Chohong Min.
\newblock On solving the singular system arisen from poisson equation with neumann boundary condition.
\newblock {\em Journal of Scientific Computing}, 69:391--405, 2016.

\end{thebibliography}
	\appendix
	\section{Unifying Framework}\label{sec:appendix:unify}
    In this section, we present some technical results for \cref{alg:cg,alg:cr} that serve as foundational tools in all of our derivations in this paper. To simplify the exposition, we adopt a more general framework depicted in \cref{alg:cgcr}, which encapsulates both \cref{alg:cg,alg:cr} at the same time. Specifically, considering $s = 0$ and $s = 1$ in \cref{alg:cgcr} amount to \cref{alg:cg} and \cref{alg:cr}, respectively. In essence, it is shown that all common properties of CG and CR, well-known for positive definite settings, continue to hold under indefinite matrices. The proofs are straightforward adaptation of the PD, found in many classical resources, e.g., \cite{nocedal1999numerical, shewchuk2007conjugate}.

	\begin{algorithm}[htbp]
		\caption{Conjugate Direction Methods ($s=0$ is CG while $s=1$ is CR)} \label{alg:cgcr}
		\begin{algorithmic}[1]
			\Require $\f{A}$ and $\f{b}$, and $ \f{x}_{0} $
			\State Let $k = 1$, $\f{r}_0 = \f{p}_0 = \f{b} - \f{Ax}_0$
			\While{Not Terminated}
			\State $\alpha_{k-1}^{(s)} = \lr{\f{r}_{k-1}, \f{A}^{s}\f{r}_{k-1}}/\lr{\f{p}_{k-1}, \f{A}^{s+1}\f{p}_{k-1}}$ 
			\State $\f{x}_{k} = \f{x}_{k-1} + \alpha_{k-1}^{(s)}\f{p}_{k-1}$
			\State $\f{r}_{k} = \f{r}_{k-1} - \alpha_{k-1}^{(s)}\f{Ap}_{k-1}$
			\State $\beta_{k-1}^{(s)} = \lr{\f{r}_{k},\f{A}^{s}\f{r}_{k}}/\lr{\f{r}_{k-1},\f{A}^{s}\f{r}_{k-1}}$
            \State $\f{p}_{k} = \f{r}_{k} + \beta_{k-1}^{(s)}\f{p}_{k-1}$
			\State $k = k+1$
			\EndWhile
			\State \Return $\f{x}_k$
		\end{algorithmic}
	\end{algorithm}
	
	\begin{lemma}\label{lem:expanding_subspace}
		Let $\{\f{r}_0, \cdots, \f{r}_{k-1}\}$ and $\{\f{p}_0, \cdots, \f{p}_{k-1}\}$ be the set of vectors generated by \cref{alg:cgcr} up to iteration $k-1$, we have 
		\begin{align*}
			\Span(\f{p}_0, \f{p}_1, \cdots, \f{p}_{k-1}) = \Span(\f{r}_0, \f{r}_1, \cdots, \f{r}_{k-1}) = \Span(\f{r}_0, \f{Ar}_0, \cdots, \f{A}^{k-1}\f{r}_0).
		\end{align*}
	\end{lemma}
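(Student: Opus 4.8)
The plan is to prove the chain of equalities by induction on $k$, following the classical ``expanding subspace'' argument but being careful not to invoke any inner-product positivity, since $\f{A}$ may be indefinite. The base case $k=1$ is immediate: $\f{p}_0 = \f{r}_0 = \f{b} - \f{Ax}_0$, so all three spans equal $\Span(\f{r}_0)$. For the inductive step, assume the three spans coincide for all indices up to $k-1$, i.e.
\begin{align*}
\Span(\f{p}_0, \dots, \f{p}_{k-2}) = \Span(\f{r}_0, \dots, \f{r}_{k-2}) = \Span(\f{r}_0, \f{Ar}_0, \dots, \f{A}^{k-2}\f{r}_0) = \krylov{A}{r_0}{k-1}.
\end{align*}
I would then show each of the two new equalities in turn.

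First, $\Span(\f{r}_0, \dots, \f{r}_{k-1}) = \krylov{A}{r_0}{k}$. From Line 5 of \cref{alg:cgcr}, $\f{r}_{k-1} = \f{r}_{k-2} - \alpha_{k-2}^{(s)}\f{Ap}_{k-2}$. By the inductive hypothesis $\f{r}_{k-2} \in \krylov{A}{r_0}{k-1} \subseteq \krylov{A}{r_0}{k}$, and $\f{p}_{k-2} \in \krylov{A}{r_0}{k-1}$ implies $\f{Ap}_{k-2} \in \f{A}\,\krylov{A}{r_0}{k-1} \subseteq \krylov{A}{r_0}{k}$; hence $\f{r}_{k-1} \in \krylov{A}{r_0}{k}$ and so $\Span(\f{r}_0, \dots, \f{r}_{k-1}) \subseteq \krylov{A}{r_0}{k}$. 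For the reverse inclusion it suffices to exhibit $\f{A}^{k-1}\f{r}_0$ in the left-hand span. Again from Line 5, $\alpha_{k-2}^{(s)}\f{Ap}_{k-2} = \f{r}_{k-2} - \f{r}_{k-1}$. If $\alpha_{k-2}^{(s)} \neq 0$ (which holds prior to termination, since a zero numerator $\lr{\f{r}_{k-2}, \f{A}^s \f{r}_{k-2}} = 0$ would be a lucky breakdown and a zero denominator is excluded by \cref{assumpt:unlucky} in the indefinite case), we get $\f{Ap}_{k-2} \in \Span(\f{r}_0, \dots, \f{r}_{k-1})$; writing $\f{p}_{k-2} = \sum_{i=0}^{k-2} c_i \f{A}^i \f{r}_0$ with $c_{k-2} \neq 0$ by the inductive hypothesis, $\f{Ap}_{k-2} = \sum_{i=0}^{k-2} c_i \f{A}^{i+1}\f{r}_0$ has a nonzero $\f{A}^{k-1}\f{r}_0$ component, so $\f{A}^{k-1}\f{r}_0 \in \Span(\f{r}_0, \dots, \f{r}_{k-1}) + \krylov{A}{r_0}{k-1} = \Span(\f{r}_0, \dots, \f{r}_{k-1})$, completing this equality. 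Second, $\Span(\f{p}_0, \dots, \f{p}_{k-1}) = \Span(\f{r}_0, \dots, \f{r}_{k-1})$: from Line 7, $\f{p}_{k-1} = \f{r}_{k-1} + \beta_{k-2}^{(s)}\f{p}_{k-2}$, and using the inductive hypothesis $\f{p}_{k-2} \in \Span(\f{r}_0,\dots,\f{r}_{k-2})$, we get $\f{p}_{k-1} \in \Span(\f{r}_0,\dots,\f{r}_{k-1})$; conversely $\f{r}_{k-1} = \f{p}_{k-1} - \beta_{k-2}^{(s)}\f{p}_{k-2}$ shows $\f{r}_{k-1} \in \Span(\f{p}_0,\dots,\f{p}_{k-1})$. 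Combined with the inductive hypothesis on the first $k-1$ vectors, both spans agree.

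The main obstacle — and the only place where the indefinite setting genuinely differs from the textbook PD proof — is the non-degeneracy of $\alpha_{k-2}^{(s)}$, which is needed to recover $\f{Ap}_{k-2}$ from consecutive residuals. I would handle this by appealing to \cref{assumpt:unlucky} to rule out a vanishing denominator and by noting that a vanishing numerator $\lr{\f{r}_{k-2}, \f{A}^s \f{r}_{k-2}} = 0$ forces $\f{r}_{k-2} = \zero$ in the PSD case (so $k-2$ was already the terminal index and there is nothing to prove for index $k$), while in the indefinite case it is exactly the lucky/unlucky dichotomy already addressed; thus for all iterations strictly before termination $\alpha_{k-2}^{(s)} \neq 0$ and the argument goes through. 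All remaining steps are the routine linear-algebra manipulations sketched above.
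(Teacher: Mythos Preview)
Your proposal is correct and follows the same inductive strategy as the paper's proof. The only technical difference is in establishing the reverse inclusion $\f{A}^{k-1}\f{r}_0 \in \Span(\f{r}_0,\dots,\f{r}_{k-1})$: you isolate the leading Krylov coefficient $c_{k-2}$ of $\f{p}_{k-2}$ (which needs one extra word of justification, since $c_{k-2}\neq 0$ is not immediate from the span-equality hypothesis alone), whereas the paper writes $\f{A}^{k-1}\f{r}_0 \in \f{A}\,\Span(\f{p}_0,\dots,\f{p}_{k-2}) = \Span(\f{Ap}_0,\dots,\f{Ap}_{k-2})$ and then uses $\alpha_j^{(s)}\f{Ap}_j = \f{r}_j - \f{r}_{j+1}$ for every $j\le k-2$ at once to land directly in $\Span(\f{r}_0,\dots,\f{r}_{k-1})$ without singling out any coefficient.
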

	\begin{proof}
		We prove by induction. For $j = 0$, we trivially have $\Span(\f{p}_0) = \Span(\f{r}_0)$. Now assume
		\begin{align*}
			\Span(\f{p}_0, \f{p}_1, \cdots, \f{p}_{i-1}) = \Span(\f{r}_0, \f{r}_1, \cdots, \f{r}_{i-1}) & = \Span(\f{r}_0, \f{Ar}_0, \cdots, \f{A}^{i-1}\f{r}_0)
		\end{align*}
		for some $i$. We show the claim for $i + 1$. Since $\f{p}_{i-1}  \in \Span(\f{r}_0, \f{Ar}_0, \cdots, \f{A}^{i-1}\f{r}_0)$, we have $ \f{Ap}_{i-1} \in \Span(\f{Ar}_0, \cdots, \f{A}^{i}\f{r}_0) $, which from $\f{r}_{i} = \f{r}_{i-1} - \alpha_{i-1}^{(s)}\f{Ap}_{i-1}$ implies $\f{r}_{i}  \in \Span(\f{r}_0, \f{Ar}_0, \cdots, \f{A}^{i}\f{r}_0)$. So, $\Span(\f{r}_0, \f{r}_1, \cdots, \f{r}_{i}) \subset \Span(\f{r}_0, \cdots, \f{A}^{i}\f{r}_0)$.
		However, from $ \f{A}^i\f{r}_0  = \f{A}(\f{A}^{i-1}\f{r}_0) $, we have 
		\begin{align*}
			\f{A}^i\f{r}_0 & \in \f{A}\Span(\f{p}_0, \f{p}_1,\cdots,\f{p}_{i-1})  = \Span(\f{Ap}_0, \f{Ap}_1,\cdots,\f{Ap}_{i-1})\\
			& = \Span((\f{r}_1 - \f{r}_0), (\f{r}_{2} - \f{r}_1),\cdots, (\f{r}_{i-1} - \f{r}_{i})) = \Span(\f{r}_0, \f{r}_1, \cdots, \f{r}_{i}).
		\end{align*}
		Together, we get $\Span(\f{b}, \f{Ab}, \cdots, \f{A}^{j}\f{b}) = \Span(\f{r}_1, \f{r}_2, \cdots, \f{r}_{j+1})$.
		Finally, 
		\begin{align*}
			\Span(\f{p}_0, \f{p}_1, \cdots, \f{p}_{i}) & = \Span(\f{p}_0, \f{p}_1, \cdots, \f{p}_{i-1}, \f{r}_{i}) = \Span(\f{r}_0, \f{Ar}_0, \cdots, \f{A}^{i-1}\f{r}_0, \f{r}_{i})\\
			& = \Span(\f{r}_0, \f{r}_1, \cdots, \f{r}_i, \f{r}_{i-1}),
		\end{align*}
		where the first equality is by $\f{p}_i = \f{r}_i - \beta_i^{(s)}\f{p}_{i-1}$
	\end{proof}
    
	\begin{lemma}\label{lem:properties_CD}
		Let $\{\f{r}_0, \cdots, \f{r}_{k-1}\}$ and $\{\f{p}_0, \cdots, \f{p}_{k-1}\}$ be the set of vectors generated by \cref{alg:cgcr} up to iteration $k-1$, for any $ 0 \leq i < j \leq k-1$, we have 
		\begin{subequations}
			\label{eq:conjugation}
			\begin{align}
			\lr{\f{r}_i, \f{A}^{s}\f{r}_j} &= 0, \label{eq:sconj_r} \\
			\lr{\f{p}_i, \f{A}^{s+1}\f{p}_j} &= 0, \label{eq:Asconj_p}
		\end{align}
		\end{subequations}
	\end{lemma}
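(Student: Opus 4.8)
The plan is to prove both identities \cref{eq:sconj_r} and \cref{eq:Asconj_p} simultaneously by induction on $j$, following the classical expanding-subspace argument for conjugate direction methods, but carrying the weight $\f{A}^{s}$ (resp.\ $\f{A}^{s+1}$) through every step. The only structural fact about $\f{A}$ that will be used is its symmetry --- so that $\f{A}^{m}$ is self-adjoint and $\lr{\f{A}^{m}\f{u},\f{v}}=\lr{\f{u},\f{A}^{m}\f{v}}$ --- together with the fact that, since the iterates $\f{r}_{0},\dots,\f{r}_{k-1}$ and $\f{p}_{0},\dots,\f{p}_{k-1}$ are assumed to exist, the scalars $\alpha^{(s)}_{i}$ and $\beta^{(s)}_{i}$ appearing in \cref{alg:cgcr} for $i\le k-2$ are well defined, i.e.\ their denominators are nonzero. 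In particular positive-definiteness is never invoked, which is precisely why the argument survives into the CR case $s=1$ where $\lr{\cdot,\cdot}_{\f{A}}$ is no longer a genuine inner product. Throughout I will use the two ``change-of-basis'' identities that follow directly from the recurrences in \cref{alg:cgcr}, namely $\f{r}_{i}=\f{p}_{i}-\beta^{(s)}_{i-1}\f{p}_{i-1}$ (with $\f{r}_{0}=\f{p}_{0}=\f{b}$) and $\f{A}\f{p}_{i}=(\f{r}_{i}-\f{r}_{i+1})/\alpha^{(s)}_{i}$, which let me shuttle freely between the $\f{r}$'s and the $\f{p}$'s; the inclusions $\f{r}_{i}\in\Span(\f{p}_{0},\dots,\f{p}_{i})$ and $\f{A}\f{p}_{i}\in\Span(\f{r}_{0},\dots,\f{r}_{i+1})$ they encode can also be read off from \cref{lem:expanding_subspace}.

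For the inductive step, fix $j$ and assume both identities hold for all pairs with second index at most $j-1$. I first establish \cref{eq:sconj_r} at level $j$, i.e.\ $\lr{\f{r}_{i},\f{A}^{s}\f{r}_{j}}=0$ for all $i<j$, and then use it to establish \cref{eq:Asconj_p} at level $j$. For the residual identity, substitute $\f{r}_{j}=\f{r}_{j-1}-\alpha^{(s)}_{j-1}\f{A}\f{p}_{j-1}$. When $i<j-1$, the term $\lr{\f{r}_{i},\f{A}^{s}\f{r}_{j-1}}$ vanishes by the induction hypothesis, and expanding $\f{r}_{i}=\f{p}_{i}-\beta^{(s)}_{i-1}\f{p}_{i-1}$ turns the remaining term $\lr{\f{r}_{i},\f{A}^{s+1}\f{p}_{j-1}}$ into a combination of conjugacy products of the form $\lr{\f{p}_{\ell},\f{A}^{s+1}\f{p}_{j-1}}$ with $\ell<j-1$, all zero by hypothesis. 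When $i=j-1$, expanding $\f{r}_{j-1}=\f{p}_{j-1}-\beta^{(s)}_{j-2}\f{p}_{j-2}$ (or using $\f{r}_{0}=\f{p}_{0}$ when $j=1$) and the induction hypothesis collapse $\lr{\f{r}_{j-1},\f{A}^{s+1}\f{p}_{j-1}}$ to $\lr{\f{p}_{j-1},\f{A}^{s+1}\f{p}_{j-1}}$, so that $\lr{\f{r}_{j-1},\f{A}^{s}\f{r}_{j}}=\lr{\f{r}_{j-1},\f{A}^{s}\f{r}_{j-1}}-\alpha^{(s)}_{j-1}\lr{\f{p}_{j-1},\f{A}^{s+1}\f{p}_{j-1}}$, which is zero by the very definition of $\alpha^{(s)}_{j-1}$.

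For the conjugacy identity at level $j$, substitute $\f{p}_{j}=\f{r}_{j}+\beta^{(s)}_{j-1}\f{p}_{j-1}$. When $i<j-1$, the term $\lr{\f{p}_{i},\f{A}^{s+1}\f{p}_{j-1}}$ vanishes by hypothesis, and using $\f{A}^{s+1}\f{p}_{i}=\f{A}^{s}(\f{r}_{i}-\f{r}_{i+1})/\alpha^{(s)}_{i}$ together with self-adjointness rewrites $\lr{\f{p}_{i},\f{A}^{s+1}\f{r}_{j}}$ in terms of $\lr{\f{r}_{i},\f{A}^{s}\f{r}_{j}}$ and $\lr{\f{r}_{i+1},\f{A}^{s}\f{r}_{j}}$, both of which are zero by \cref{eq:sconj_r} at level $j$ (just proved), since $i+1\le j-1<j$. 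When $i=j-1$, the same rewriting gives $\lr{\f{p}_{j-1},\f{A}^{s+1}\f{r}_{j}}=-\lr{\f{r}_{j},\f{A}^{s}\f{r}_{j}}/\alpha^{(s)}_{j-1}$ after using $\lr{\f{r}_{j-1},\f{A}^{s}\f{r}_{j}}=0$, while $\beta^{(s)}_{j-1}\lr{\f{p}_{j-1},\f{A}^{s+1}\f{p}_{j-1}}=\lr{\f{r}_{j},\f{A}^{s}\f{r}_{j}}/\alpha^{(s)}_{j-1}$ by the definitions of $\alpha^{(s)}_{j-1}$ and $\beta^{(s)}_{j-1}$; the two contributions cancel. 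The base case $j=1$ is the specialization of these computations with $\f{r}_{0}=\f{p}_{0}=\f{b}$ and no $\beta^{(s)}_{-1}$ term, checked directly.

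I do not anticipate a genuine obstacle here --- the statement is the standard expanding-subspace theorem --- but the care-demanding part is the bookkeeping in the diagonal cases $i=j-1$, where the defining quotients $\alpha^{(s)}_{j-1}$, $\beta^{(s)}_{j-1}$ and (implicitly) $\alpha^{(s)}_{j-2}$ must be matched up exactly for the cancellation to occur, and the handling of the small-index boundary cases ($j=1$ or $i=0$) where $\f{p}_{i-1}$ or $\beta^{(s)}_{i-2}$ are not defined and one falls back on $\f{r}_{0}=\f{p}_{0}=\f{b}$. It is worth recording once, at the outset, that tracking $\f{A}^{s}$ rather than the identity nowhere uses that $\lr{\cdot,\cdot}_{\f{A}}$ be positive definite, only that $\f{A}$ (hence $\f{A}^{m}$) is symmetric, so that the single argument covers $s=0$ (\cref{alg:cg}) and $s=1$ (\cref{alg:cr}) at once.
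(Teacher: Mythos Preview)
Your proposal is correct and follows essentially the same approach as the paper's proof: induction on the larger index, splitting into the cases $i=j-1$ and $i<j-1$, first establishing \cref{eq:sconj_r} at level $j$ and then using it for \cref{eq:Asconj_p}, with the same shuttling identities $\f{r}_{i}=\f{p}_{i}-\beta^{(s)}_{i-1}\f{p}_{i-1}$ and $\f{A}\f{p}_{i}=(\f{r}_{i}-\f{r}_{i+1})/\alpha^{(s)}_{i}$ doing the work. The paper's write-up is slightly terser but structurally identical, including the handling of the diagonal cancellations via the definitions of $\alpha^{(s)}$ and $\beta^{(s)}$.
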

	\begin{proof} 
		Without loss of generality, assume $k > 1$, we prove the claims by induction. For $i+1 = j = 1$, $\lr{\f{r}_{0}, \f{A}^{s}\f{r}_{1}} = \lr{\f{r}_{0}, \f{A}^{s}(\f{r}_0 - \alpha_{0}^{(s)} \f{A} \f{p}_0)}  = 0$,
		by definition of $ \alpha_{0}^{(s)} $. Since $\alpha_0^{(s)} \neq 0$, then
		\begin{align*}
			\lr{\f{p}_{0}, \f{A}^{s+1}\f{p}_{1}} & = \lr{\f{A} \f{p}_{0}, \f{A}^{s}(\f{r}_1 + \beta_0^{(s)}\f{p}_0)} \\
			&= \frac{1}{\alpha_{0}^{(s)}}\lr{\f{r}_0 - \f{r}_1, \f{A}^{s}\f{r}_1} + \frac{\beta_0^{(s)}}{\alpha_{0}^{(s)}}\lr{\f{r}_0 - \f{r}_1, \f{A}^{s}\f{p}_0} \\
			&= -\frac{1}{\alpha_{0}^{(s)}}\lr{\f{r}_1, \f{A}^{s}\f{r}_1} + \frac{\beta_0^{(s)}}{\alpha_{0}^{(s)}}\lr{\f{r}_0, \f{A}^{s}\f{p}_0} = 0,
		\end{align*}
		by definition of $ \beta_{0}^{(s)} $. Assume 
		\begin{subequations}
			\label{eq:conjugation_prev}
			\begin{align}
				\lr{\f{r}_i, \f{A}^{s}\f{r}_j} &= 0, \quad 0 \leq i < j \leq k-1, \label{eq:sconj_r_prev} \\
				\lr{\f{p}_i, \f{A}^{s+1}\f{p}_j} &= 0, \quad 0 \leq i < j \leq k-1, \label{eq:Asconj_p_prev}
			\end{align}
		\end{subequations}
		for some $k$. Now we show the claim holds for $0 \leq i < j \leq k$. Let $j = k$,
		for $i = k - 1$, we have
		\begin{align*}
			\lr{\f{r}_{k-1}, \f{A}^{s}\f{r}_{k}} &= \lr{\f{r}_{k-1}, \f{A}^{s}(\f{r}_{k-1} - \alpha_{k-1}^{(s)} \f{A} \f{p}_{k-1})}\\
            &= \lr{\f{r}_{k-1}, \f{A}^{s}\f{r}_{k-1}} - \alpha_{k-1}^{(s)} \lr{(\f{p}_{k-1} - \beta_{k-2}^{(s)}\f{p}_{k-2})\f{A}^{s+1}\f{p}_{k-1}}\\
            &= \lr{\f{r}_{k-1}, \f{A}^{s}\f{r}_{k-1}} - \alpha_{k-1}^{(s)} \lr{\f{p}_{k-1},\f{A}^{s+1}\f{p}_{k-1}} = 0,
		\end{align*}
		where the second last and the last equality follow from the inductive hypothesis and the definition of $ \alpha_{k-1}^{(s)} $. For $i < k - 1$, we have
		\begin{align*}
			\lr{\f{r}_{i}, \f{A}^{s}\f{r}_{k}} &= \lr{\f{r}_{i}, \f{A}^{s}(\f{r}_{k-1} - \alpha_{k-1}^{(s)} \f{A} \f{p}_{k-1})}\\
            &= - \alpha_{k-1}^{(s)} \lr{\f{p}_{i} - \beta_{i-1}^{(s)} \f{p}_{i-1}, \f{A}^{s+1} \f{p}_{k-1}} = 0,
		\end{align*}
		by \cref{eq:conjugation_prev}. This gives \cref{eq:sconj_r}. We now show \cref{eq:Asconj_p}. For $i = k - 1$, we have
		\begin{align*}
			\lr{\f{p}_{k-1}, \f{A}^{s+1}\f{p}_{k}} &= \lr{\f{A} \f{p}_{k-1}, \f{A}^{s}(\f{r}_{k} + \beta_{k-1}^{(s)} \f{p}_{k-1})} \\
			&= \frac{1}{\alpha_{k-1}^{(s)}}\lr{\f{r}_{k-1} - \f{r}_{k}, \f{A}^{s}\f{r}_{k}} + \beta_{k-1}^{(s)} \lr{\f{p}_{k-1}, \f{A}^{s+1} \f{p}_{k-1}} \\
            &= -\frac{1}{\alpha_{k-1}^{(s)}} \lr{\f{r}_{k}, \f{A}^{s}\f{r}_{k}} + \beta_{k-1}^{(s)}\lr{\f{p}_{k-1}, \f{A}^{s+1}\f{r}_{k-1}}\\
			&= -\frac{1}{\alpha_{k-1}^{(s)}} \lr{\f{r}_{k}, \f{A}^{s}\f{r}_{k}} + \frac{\beta_{k-1}^{(s)}}{\alpha_{k-1}^{(s)}} \lr{\f{r}_{k-1}, \f{A}^{s}\f{r}_{k-1}} = 0,
		\end{align*}
		by \cref{eq:sconj_r} and the definition of $ \alpha_{k-1}^{(s)} $ and $ \beta_{k-1}^{(s)} $. For $i < k - 1$, we have
		\begin{align*}
			\lr{\f{p}_{i}, \f{A}^{s+1}\f{p}_{k}} &= \lr{\f{p}_{i}, \f{A}^{s+1}(\f{r}_{k} + \beta_{k-1}^{(s)} \f{p}_{k-1})} \\
            & = \frac{1}{\alpha_{i-1}^{(s)}}\lr{\f{r}_{i} - \f{r}_{i + 1}, \f{A}^{s} \f{r}_{k}} + \beta_{k-1}^{(s)} \lr{\f{p}_{i}, \f{A}^{s+1}\f{p}_{k-1}} = 0,
		\end{align*}
		by \cref{eq:sconj_r,eq:Asconj_p_prev} and our induction assumptions.
	\end{proof}

	\begin{lemma}\label{lem:rp}
		Let $\{\f{r}_0, \cdots, \f{r}_{k-1}\}$ and $\{\f{p}_0, \cdots, \f{p}_{k-1}\}$ be the set of vectors generated by \cref{alg:cgcr} up to iteration $k-1$, then, for $0 \leq j < i \leq k-1$, 
		\begin{subequations}
			\label{eq:rp}
			\begin{align}
				\lr{\f{r}_i, \f{A}^{s}\f{p}_j} &= 0, \label{eq:rp=0}
                \intertext{and, for $0 \leq i \leq j \leq k-1$,}
				\lr{\f{r}_i, \f{A}^{s}\f{p}_j} &= \lr{\f{r}_j, \f{A}^{s}\f{r}_j}. \label{eq:rp=rr}
			\end{align}
		\end{subequations}
	\end{lemma}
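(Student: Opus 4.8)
The plan is to derive both identities from two facts already in hand: the nested span identity $\Span(\f{p}_0,\dots,\f{p}_j) = \Span(\f{r}_0,\dots,\f{r}_j)$ from \cref{lem:expanding_subspace}, and the $\f{A}^{s}$-orthogonality $\lr{\f{r}_i,\f{A}^{s}\f{r}_j} = 0$ for $i\neq j$ from \cref{lem:properties_CD} (the relation is symmetric in $i,j$ because $\f{A}^{s}$ is a symmetric matrix, so although it is stated only for $i<j$ it holds for all $i\neq j$). Throughout I will use freely that $\lr{\f{u},\f{A}^{s}\f{v}} = \lr{\f{v},\f{A}^{s}\f{u}}$.

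First I would prove \cref{eq:rp=0}. Fix $0 \leq j < i \leq k-1$. By \cref{lem:expanding_subspace}, $\f{p}_j \in \Span(\f{r}_0,\dots,\f{r}_j)$, so write $\f{p}_j = \sum_{\ell=0}^{j} c_\ell \f{r}_\ell$. Then $\lr{\f{r}_i,\f{A}^{s}\f{p}_j} = \sum_{\ell=0}^{j} c_\ell \lr{\f{r}_\ell,\f{A}^{s}\f{r}_i}$, and every term vanishes by \cref{lem:properties_CD} since $\ell \leq j < i$. (Alternatively one can run a one-line induction on $j$ via $\f{p}_j = \f{r}_j + \beta_{j-1}^{(s)}\f{p}_{j-1}$, with base case $\lr{\f{r}_i,\f{A}^{s}\f{p}_0} = \lr{\f{r}_i,\f{A}^{s}\f{r}_0} = 0$ for $i>0$.)

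Then I would establish \cref{eq:rp=rr} by induction on $j$. The base case $j=0$ is immediate, as $\f{p}_0 = \f{r}_0$. For the step, assume $\lr{\f{r}_i,\f{A}^{s}\f{p}_{j-1}} = \lr{\f{r}_{j-1},\f{A}^{s}\f{r}_{j-1}}$ for all $i \leq j-1$, and take $i \leq j$. Using $\f{p}_j = \f{r}_j + \beta_{j-1}^{(s)}\f{p}_{j-1}$,
\[
\lr{\f{r}_i,\f{A}^{s}\f{p}_j} = \lr{\f{r}_i,\f{A}^{s}\f{r}_j} + \beta_{j-1}^{(s)}\lr{\f{r}_i,\f{A}^{s}\f{p}_{j-1}}.
\]
If $i = j$, the last inner product is $0$ by \cref{eq:rp=0} (applied with indices $j-1 < j$), so $\lr{\f{r}_j,\f{A}^{s}\f{p}_j} = \lr{\f{r}_j,\f{A}^{s}\f{r}_j}$. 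If $i < j$, then $\lr{\f{r}_i,\f{A}^{s}\f{r}_j} = 0$ by \cref{lem:properties_CD}, and the induction hypothesis together with $\beta_{j-1}^{(s)} = \lr{\f{r}_j,\f{A}^{s}\f{r}_j}/\lr{\f{r}_{j-1},\f{A}^{s}\f{r}_{j-1}}$ yields $\lr{\f{r}_i,\f{A}^{s}\f{p}_j} = \beta_{j-1}^{(s)}\lr{\f{r}_{j-1},\f{A}^{s}\f{r}_{j-1}} = \lr{\f{r}_j,\f{A}^{s}\f{r}_j}$.

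I do not expect a genuine obstacle here — the argument is essentially bookkeeping. The two points that need care are the ordering of the sub-results (\cref{eq:rp=0} must precede the $i=j$ case of \cref{eq:rp=rr}) and consistently invoking the symmetry of $\f{A}^{s}$ so that \cref{lem:properties_CD} applies for all $i\neq j$; the denominators $\lr{\f{r}_{j-1},\f{A}^{s}\f{r}_{j-1}}$ are nonzero precisely because all iterates through index $k-1$ are assumed to exist, so no appeal to \cref{assumpt:unlucky} is required.
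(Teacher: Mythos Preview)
Your proof is correct. The route you take for \cref{eq:rp=0} differs from the paper's: the paper runs a joint induction on the iteration counter, unrolls the residual recurrence $\f{r}_k = \f{r}_{k-1} - \alpha_{k-1}^{(s)}\f{Ap}_{k-1}$, and invokes the $\f{A}^{s+1}$-conjugacy $\lr{\f{p}_i,\f{A}^{s+1}\f{p}_{k-1}} = 0$ from \cref{lem:properties_CD}; you instead expand $\f{p}_j$ in the residual basis via \cref{lem:expanding_subspace} and use only the residual orthogonality $\lr{\f{r}_i,\f{A}^s\f{r}_\ell}=0$. Your argument is a bit more economical in that it never touches the $\f{A}^{s+1}$-conjugacy of the $\f{p}$'s or the step size $\alpha^{(s)}$, at the cost of importing \cref{lem:expanding_subspace}. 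For \cref{eq:rp=rr} your induction on $j$ via $\f{p}_j = \f{r}_j + \beta_{j-1}^{(s)}\f{p}_{j-1}$ is essentially identical to what the paper does, including the split into $i=j$ and $i<j$ and the use of $\beta_{j-1}^{(s)}\lr{\f{r}_{j-1},\f{A}^s\f{r}_{j-1}} = \lr{\f{r}_j,\f{A}^s\f{r}_j}$. Your remarks about the symmetry of $\f{A}^s$ and about the denominators being nonzero simply because the iterates exist are both on point.
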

	\begin{proof}
		We will prove this by induction. First, we show the base case for \cref{eq:rp=0}, for $j+1 = i = 1$,
		\begin{align*}
			\lr{\f{r}_1, \f{A}^{s}\f{p}_0} & = \lr{\f{r}_0 - \alpha_0^{(s)}\f{A}\f{p}_0, \f{A}^{s}\f{p}_0} = \lr{\f{r}_0, \f{A}^{s}\f{p}_0} - \alpha_0^{(s)}\lr{\f{p}_0, \f{A}^{s+1}\f{p}_0} = 0,
		\end{align*}
		where the last equality is by the definition of $\alpha_0^{(s)}$ and $\f{r}_0$. Next, we show the base case for \cref{eq:rp=rr}, for $j = i = 0$, we obtain the desired result trivially. Now, for $j = i = 1$, we obtain $\lr{\f{r}_1, \f{A}^{s}\f{p}_1} = \lr{\f{r}_1, \f{A}^{s} \left(\f{r}_1 + \beta_{0}^{(s)}\f{p}_{0}\right)} = \lr{\f{r}_1, \f{A}^{s}\f{r}_1}$,
		by the previous result. For $i + 1= j = 1$, we have
		\begin{align*}
			\lr{\f{r}_0, \f{A}^{s}\f{p}_1} = \lr{\f{r}_0, \f{A}^{s} \left(\f{r}_1 + \beta_{0}^{(s)}\f{p}_{0}\right)} = \lr{\f{r}_0, \f{A}^{s} \f{r}_1} + \beta_{0}^{(s)} \lr{\f{r}_0, \f{A}^{s} \f{p}_{0}} = \lr{\f{r}_1, \f{A}^{s}\f{r}_1},
		\end{align*}
		where the last line is by the definition of $\beta_0^{(s)}$ and \cref{eq:sconj_r}. Now, assume
		\begin{align*}
			\lr{\f{r}_i, \f{A}^{s}\f{p}_j} &= 0, \quad 0 \leq j < i \leq k-1, \\
			\lr{\f{r}_i, \f{A}^{s}\f{p}_j} &= \lr{\f{r}_j, \f{A}^{s}\f{r}_j}, \quad 0 \leq i \leq j \leq k-1
		\end{align*}
		for some $k$. We now show the claim for $k+1$. Let $i = k$, for $j = k - 1$, we have
		\begin{align*}
			\lr{\f{r}_{k}, \f{A}^{s}\f{p}_{k-1}} & = \lr{\f{r}_{k-1} - \alpha_{k-1}^{(s)}\f{Ap}_{k-1}, \f{A}^{s}\f{p}_{k-1}}\\
            &= \lr{\f{r}_{k-1}, \f{A}^{s}\f{p}_{k-1}} - \alpha_{k-1}^{(s)} \lr{\f{p}_{k-1}, \f{A}^{s+1}\f{p}_{k-1}} = 0,
		\end{align*}
		by the definition of $\alpha_{k-1}^{(s)}$ and the inductive hypothesis. For $j < k - 1$, we get
		\begin{align*}
			\lr{\f{r}_{k}, \f{A}^{s}\f{p}_j} & = \lr{\f{r}_{k-1} - \alpha_{k-1}^{(s)}\f{Ap}_{k-1}, \f{A}^{s}\f{p}_j} \\
            &= \lr{\f{r}_{k-1}, \f{A}^{s}\f{p}_j} - \alpha_{k-1}^{(s)} \lr{\f{p}_{k-1}, \f{A}^{s+1}\f{p}_j} = 0,
		\end{align*}
		where the first term and the second term each evaluate to zero by the inductive hypothesis and \cref{eq:Asconj_p}, respectively. This gives us \cref{eq:rp=0}. Now, let $j = k$, for $i = k$, we have 
        \begin{align*}
            \lr{\f{r}_{k}, \f{A}^{s}\f{p}_{k}} = \lr{\f{r}_{k}, \f{A}^{s} \left(\f{r}_{k} + \beta_{k-1}^{(s)}\f{p}_{k-1}\right)} = \lr{\f{r}_{k}, \f{A}^{s}\f{r}_{k}},
        \end{align*}
		by \cref{eq:rp=0}. For $i < k$, we have
		\begin{align*}
			\lr{\f{r}_{i}, \f{A}^{s}\f{p}_{k}} &= \lr{\f{r}_{i}, \f{A}^{s} \left(\f{r}_{k} + \beta_{k-1}^{(s)}\f{p}_{k-1}\right)} = \lr{\f{r}_{i}, \f{A}^{s} \f{r}_{k}} + \beta_{k-1}^{(s)} \lr{\f{r}_{i}, \f{A}^{s} \f{p}_{k-1}} \\
			&= \beta_{k-1}^{(s)} \lr{\f{r}_{k-1}, \f{A}^{s} \f{r}_{k-1}} = \lr{\f{r}_{k}, \f{A}^{s}\f{r}_{k}},
		\end{align*}
		where the last line is by the definition of $\beta_0^{(s)}$, and the second equality due to \cref{eq:sconj_r} and the inductive hypothesis, respectively. This gives us \cref{eq:rp=rr}.
	\end{proof}
 
	\section{Further Results}
	\label{sec:appendix:results}
	In this section, we gather a few results that are used in the proofs throughout the paper. We first provide a simple yet useful result that holds in general regardless of consistency. 
	\begin{lemma}\label{lem:p_in_null_b}
		Under \cref{assumpt:unlucky}, in \cref{alg:cgcr} we have $(\f{I} - \f{A}^\dagger\f{A})\f{p}_{k-1} = c^{(s)}_{k-1}\f{b}^\perp$ for $1 \leq k \leq g$, where $c^{(s)}_{k-1} = 1 + \beta^{(s)}_{k-2}c^{(s)}_{k-2}, \; k \geq 1$,  with $c^{(s)}_0 = 1$.
	\end{lemma}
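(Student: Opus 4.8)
The plan is to exploit the single structural fact that $\f{I}-\f{A}^\dagger\f{A}$ is the orthogonal projector onto $\Null(\f{A})$, together with the observation that \cref{alg:cgcr} only ever subtracts $\f{A}$-images from the residuals and only ever adds multiples of the previous direction to the current one; hence the $\Null(\f{A})$-component of every iterate is governed by a trivial recursion. First I would record the two elementary ingredients. Since $\f{A}$ is symmetric, $\Range(\f{A}) = \Null(\f{A})^\perp$, and the Moore--Penrose identities give $\f{A}^\dagger\f{A}\f{A} = \f{A}$ together with the fact that $\f{A}^\dagger\f{A}$ is the orthogonal projector onto $\Range(\f{A})$. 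Consequently $(\f{I}-\f{A}^\dagger\f{A})\f{A}\f{v} = \zero$ for every $\f{v}\in\Rd$, and applying $\f{I}-\f{A}^\dagger\f{A}$ to the decomposition $\f{b} = \ft{b}+\f{b}^\perp$ from \cref{eq:orth_decomp} yields $(\f{I}-\f{A}^\dagger\f{A})\f{b} = \f{b}^\perp$.

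Next I would establish by a one-line induction on $k$ that $(\f{I}-\f{A}^\dagger\f{A})\f{r}_{k-1} = \f{b}^\perp$ for all $1\le k\le g$. The base case is $\f{r}_0 = \f{b}$. For the inductive step, apply $\f{I}-\f{A}^\dagger\f{A}$ to $\f{r}_{k} = \f{r}_{k-1} - \alpha_{k-1}^{(s)}\f{Ap}_{k-1}$; the second term is annihilated, so $(\f{I}-\f{A}^\dagger\f{A})\f{r}_{k} = (\f{I}-\f{A}^\dagger\f{A})\f{r}_{k-1} = \f{b}^\perp$. Here \cref{assumpt:unlucky} enters only to guarantee that the iterates $\f{r}_k,\f{p}_k$ (and the scalars $\beta_{k-1}^{(s)}$) are well defined all the way up to the grade $g$.

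Finally I would prove the stated claim by induction on $k$. For $k=1$, $\f{p}_0 = \f{b}$ gives $(\f{I}-\f{A}^\dagger\f{A})\f{p}_0 = \f{b}^\perp$, i.e.\ $c^{(s)}_0 = 1$. Assuming $(\f{I}-\f{A}^\dagger\f{A})\f{p}_{k-1} = c^{(s)}_{k-1}\f{b}^\perp$, apply $\f{I}-\f{A}^\dagger\f{A}$ to $\f{p}_{k} = \f{r}_{k} + \beta_{k-1}^{(s)}\f{p}_{k-1}$ and use the preceding paragraph to get $(\f{I}-\f{A}^\dagger\f{A})\f{p}_{k} = \f{b}^\perp + \beta_{k-1}^{(s)}c^{(s)}_{k-1}\f{b}^\perp = (1 + \beta_{k-1}^{(s)}c^{(s)}_{k-1})\f{b}^\perp$, which is exactly the asserted recursion $c^{(s)}_{k} = 1 + \beta_{k-1}^{(s)}c^{(s)}_{k-1}$ after the index shift.

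There is essentially no hard step here: the argument is two short inductions glued together. The only points requiring a little care are citing the correct pseudo-inverse identities for symmetric $\f{A}$ (which pin down $(\f{I}-\f{A}^\dagger\f{A})\f{A}=\zero$ and $(\f{I}-\f{A}^\dagger\f{A})\f{b}=\f{b}^\perp$), keeping the index shift between $c^{(s)}_{k-1}$ and $c^{(s)}_{k}$ straight, and noting explicitly that all quantities involved are defined up to iteration $g$ under \cref{assumpt:unlucky}. If one prefers to avoid invoking the Moore--Penrose identities at all, both facts can instead be read off directly from the spectral decomposition of $\f{A}$.
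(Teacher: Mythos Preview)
Your proposal is correct and follows essentially the same approach as the paper: both arguments hinge on the projector identity $(\f{I}-\f{A}^\dagger\f{A})\f{r}_{k}=\f{b}^\perp$ followed by a short induction on $\f{p}_{k}=\f{r}_{k}+\beta_{k-1}^{(s)}\f{p}_{k-1}$. The only cosmetic difference is that the paper obtains the residual identity in one line from $\f{r}=\f{b}-\f{A}\f{x}$ rather than by induction on the recursion, but the substance is identical.
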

	\begin{proof}
		We first note that, for any $ \f{x} $, $(\f{I} - \f{A}^\dagger\f{A})\f{r} = (\f{I} - \f{A}^\dagger\f{A})(\f{b} - \f{A}\f{x}) = \f{b}^\perp$ where we have used the fact that, for the symmetric matrix $ \f{A} $, we have $ \f{A}^{\dagger}\f{A} = \f{A} \f{A}^{\dagger} $.
		The proof follows an inductive argument. The result trivially holds for $k=1$. Suppose the claim holds for any $ 1 \leq k < g $. For $k+1$, we have 
		\begin{align*}
			(\f{I} - \f{A}^\dagger\f{A})\f{p}_{k} & = (\f{I} - \f{A}^\dagger\f{A})(\f{r}_{k} + \beta_{k-1}^{(s)}\f{p}_{k-1}) = \f{b}^\perp + \beta^{(s)}_{k-1}(\f{I} - \f{A}^\dagger\f{A})\f{p}_{k-1}\\
            &= \left(1 + \beta^{(s)}_{k-1} c^{(s)}_{k-1} \right) \f{b}^\perp.
		\end{align*}
	\end{proof}
	
    The next result, which is crucial for several proofs, particularly for the derivations in \cref{sec:appendix:or}, states that the contribution of the null space of $ \mathbf{A} $ to $ \mathbf{b} $, namely $ \mathbf{b}^{\perp} $, is included in the Krylov subspace only once the grade of $ \mathbf{b} $ with respect to $ \mathbf{A} $ is reached.
    \begin{lemma}\label{lem:b_perp_in_krylov_g}
        Let $\f{b} \notin \Range(\f{A})$. Then, $\f{b}^\perp \in \krylov{A}{b}{k}$ if and only if $k \geq g$.
    \end{lemma}
    \begin{proof}
        First, we note that since $\f{b} \notin \Range(\f{A})$, we must have $\f{b} \neq \f{0}$ and $\f{b}^\perp \neq \f{0}$. Suppose $\f{b}^\perp \in \krylov{A}{b}{k}$ but $k < g$. Recall that in this case $\{\f{b}, \f{A}\f{b}, \cdots, \f{A}^{k-1}\f{b}, \f{A}^{k}\f{b}\}$ are linearly independent. We can write
        \begin{align*}
            \f{b}^\perp & = a_0\f{b} + a_1\f{Ab} + \cdots + a_{k-1}\f{A}^{k-1}\f{b},
        \end{align*}
        which gives
        \begin{align*}
            \f{0} = \f{Ab}^{\perp} & = a_0\f{Ab} + a_1\f{A}^2\f{b} + \cdots + a_{k-1}\f{A}^{k}\f{b} \in \krylov{A}{b}{k+1}.
        \end{align*}
        Since not all the coefficients $a_i$, $0 \leq i \leq k-1$, are zero, the vectors $\f{Ab}, \f{A}^2\f{b}, \cdots, \f{A}^{k}\f{b}$ must be linearly dependent, which leads to a contradiction. 
        Conversely,  suppose $k\geq g$. We can write $\f{0} = a_0\f{b} + a_1\f{Ab} + \cdots + a_{g}\f{A}^g\f{b}$ where not all coefficients $a_i, 0 \leq i \leq g$, are zero. By assumption $\f{b} \notin \Range(\f{A})$, which implies that $a_0 = 0$.  This gives  
        \begin{align*}
            \f{0} & = a_1\f{Ab} + a_2\f{A}^2\f{b} + \cdots + a_g\f{A}^g\f{b} = \f{A}(a_1\f{b} + a_2\f{A}\f{b} + \cdots + a_g\f{A}^{g-1}\f{b}) = \f{Ac},
        \end{align*}
        where $\f{c} = a_1\f{b} + a_2\f{A}\f{b} + \cdots + a_g\f{A}^{g-1}\f{b} \in \krylov{A}{b}{g}$. Hence, $\f{0} \neq \f{c} \in \Null(\f{A})$. Clearly, $a_1\neq 0$, as otherwise $\f{c} \in \Range(\f{A})$, which implies $\f{c} = \f{0}$. Now, from $\f{b} = \ft{b} + \f{b}^\perp$, we get 
        \begin{align*}
            \frac{1}{a_{1}} \f{c} - \f{b}^\perp =   \ft{b} + \frac{1}{a_{1}}\left( a_2\f{A}\f{b} + \cdots + a_g\f{A}^{g-1}\f{b}\right).
        \end{align*}
        The left-hand side is a vector in the null space of $\f{A}$, where as the right-hand side is in the range of $\f{A}$. This equality can only hold if both sides are zero vectors. Consequently, we have  $ \f{b}^\perp = \f{c} /a_{1}\in \krylov{A}{b}{g}$
    \end{proof}
    
    The next result states that if the subspace $\mathcal{K}_k(\mathbf{A}, \mathbf{b})$ contains any normal solution, it must be of a specific format. 
    \begin{lemma}\label{lem:normal_solution_y_b}
        If $\f{A}^\dagger\f{b} + \f{y} \in \krylov{A}{b}{k}$ for some $\f{y} \in \Null(\f{A})$, then $\f{y} = a(\f{I} - \f{A}\f{A}^\dagger)\f{b}$ for some $a \in \real$.
	\end{lemma}
    \begin{proof}
        Suppose $\f{A}^\dagger\f{b} + \f{y} \in \krylov{A}{b}{k}$ for some $\f{y} \in \Null(\f{A})$. We have $\f{A}^\dagger\f{b} + \f{y} = a_0\f{b} + a_1\f{Ab} + \cdots + a_{k-1}\f{A}^{k-1}\f{b}$, which implies $\f{y} = (\f{I} - \f{A}\f{A}^\dagger) \f{y} =  a_0 (\f{I} - \f{A}\f{A}^\dagger)\f{b}$.
    \end{proof}
    
    The next result  indicates that the underlying Krylov subspace can contain a normal solution only if it is large enough. This result is likely already known; however, we could not pinpoint a source to reference it. Therefore, we include it here for completeness. 
    \begin{lemma}\label{lem:solution_in_krylov}
        \hfill
		\begin{itemize}
		    \item Let $\f{b} \notin \Range(\f{A})$. Then $\f{A}^\dagger\f{b} + a(\f{I} - \f{A}\f{A}^\dagger)\f{b} \in \krylov{A}{b}{k}$ for some $a \in \real$ if and only if $k \geq g - 1$.
            \item  Let $\f{b} \in \Range(\f{A})$. Then $\f{A}^\dagger\f{b} \in \krylov{A}{b}{k}$ if and only if $k \geq g$.
		\end{itemize}
	\end{lemma}
	\begin{proof}
        \hfill
		\begin{itemize}
		    \item Suppose $\f{b} \notin \Range(\f{A})$, which in particular implies $\f{b} \neq \f{0}$ and $\f{b}^\perp \neq \f{0}$. Let $k \geq g-1$. By \cref{lem:b_perp_in_krylov_g}, we know that $\f{b}^\perp \in \krylov{A}{b}{g}$. We write, $-\f{b}^\perp = a_0\f{b} + a_1 \f{Ab} + \cdots + a_{g-1}\f{A}^{g-1}\f{b}$, which can only hold if $a_0 = -1$. Hence, 
        \begin{align*}
            \ft{b} & = a_1 \f{Ab} + a_2\f{A}^2\f{b} + \cdots + a_{g-1}\f{A}^{g-1}\f{b} = \f{A}(a_1 \f{b} + a_2\f{A}\f{b} + \cdots + a_{g-1}\f{A}^{g-2}\f{b}) \\
            & = \f{A}(\underbrace{a_1\ft{b} + \cdots + a_{g-1}\f{A}^{g-2}\f{b}}_{\triangleq \,\,\f{c}} + \underbrace{a_1\f{b}^\perp}_{\triangleq \,\, \f{c}'}).
        \end{align*}
        From $\f{c} \in \Range(\f{A})$, it follows that 
        \begin{align*}
            \f{A}^{\dagger}\f{b} = \f{A}^{\dagger}\ft{b} = \f{A}^{\dagger} \f{A} \f{c} = \f{c}.
        \end{align*}
        Now, since $\f{c} + \f{c}' \in \krylov{A}{b}{g-1} \subseteq \krylov{A}{b}{g}$, we get the desired result. Conversely, assume that for some  $k < g - 1$, $\krylov{A}{b}{k}$ contains a  normal solution, i.e., $\f{A}^\dagger\f{b} + (\f{I} - \f{AA}^\dagger)\f{y} \in \krylov{A}{b}{k}$ for some $\f{y}$ and $k < g - 1$. We can write 
        \begin{align*}
            \f{A}^\dagger\f{b} + (\f{I} - \f{AA}^\dagger)\f{y} & = a_0\f{b} + a_1\f{Ab} + \cdots + a_{k-1}\f{A}^{k-1}\f{b},
        \end{align*}
        which gives
        \begin{align*}
            \ft{b} = \f{A} \f{A}^\dagger\f{b} = \f{A}(\f{A}^\dagger\f{b} + (\f{I} - \f{AA}^\dagger)\f{y}) & = a_0\f{Ab} + a_1\f{A}^2\f{b} + \cdots + a_{k-1}\f{A}^{k}\f{b}.
        \end{align*}
        Hence, 
        \begin{align*}
            -\f{b}^\perp & = - \f{b} + a_0\f{Ab} + a_1\f{A}^2\f{b} + \cdots + a_{k-1}\f{A}^{k}\f{b} \in \krylov{A}{b}{k+1},
        \end{align*}
        which contradicts \cref{lem:b_perp_in_krylov_g}. So, $k \geq g-1$. 
        \item Suppose $\f{b} \in \Range(\f{A})$. If $\f{b} = \f{0}$, then the results is trivial. Assume $\f{b} \neq \f{0}$ and  $\f{A}^\dagger\f{b} \in \krylov{A}{b}{k}$ but $k < g$. We can write
        \begin{align*}
            \f{A}^\dagger\f{b} & = a_0\f{b} + a_1\f{Ab} + \cdots + a_{k-1}\f{A}^{k-1}\f{b},
        \end{align*}
        which implies
        \begin{align*}
            \f{b} = \f{AA}^\dagger\f{b} & = a_0\f{Ab} + a_1\f{A}^2\f{b} + \cdots + a_{k-1}\f{A}^{k}\f{b}.
        \end{align*}
        However, this 
        leads to a contradiction, since $\f{b}, \f{Ab}, \cdots, \f{A}^k\f{b}$ are linearly independent vectors for $k \leq g-1$. So, we must have $k \geq g$. Conversely, let $k \geq g$ and consider $\krylov{A}{b}{g+1}$. Since the vectors $\f{b}, \f{Ab}, \cdots, \f{A}^{g}\f{b}$ are linearly dependent, there exist coefficients $a_0, a_1, \ldots, a_{g}$, not all zero, such that $\f{0} = -a_0\f{b} + a_1\f{Ab} + \cdots + a_g\f{A}^g\f{b}$. In particular, we must have $a_0 \not= 0$. Otherwise, applying $\f{A}^{\dagger}$ twice to both sides yields
        \begin{align*}
            - a_{1} \f{A}^{\dagger} \f{b} = a_2\f{b} + a_3\f{Ab} + \cdots + a_g\f{A}^{g-2}\f{b} \in \krylov{A}{b}{g-1}.
        \end{align*}
        However, by the first part of the present proof, this cannot occur. Hence, $a_0 \neq 0$, and we can write 
        \begin{align*}
            a_0 \f{A}^{\dagger} \f{b} = a_1\f{b} + \cdots + a_g\f{A}^{g-1}\f{b},
        \end{align*}
        which implies $\f{A}^\dagger\f{b} \in \krylov{A}{b}{g}$.
		\end{itemize}
		
	\end{proof}
    
	\begin{lemma}\label{lem:residual_goes_to_zero}
		Let $\f{b}\in\Range(\f{A})$. Under \cref{assumpt:unlucky}, in \cref{alg:cgcr}, we have $\f{r}_{g} = \f{0}$.
	\end{lemma}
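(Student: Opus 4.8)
The plan is to combine the expanding-subspace identity of \cref{lem:expanding_subspace} with the $\f{A}^{s}$-orthogonality of the residuals from \cref{lem:properties_CD}. First I would note that, by \cref{lem:expanding_subspace}, $\f{r}_{g-1},\f{p}_{g-1}\in\krylov{A}{b}{g}$, so $\f{Ap}_{g-1}\in\f{A}\krylov{A}{b}{g}\subseteq\krylov{A}{b}{g+1}=\krylov{A}{b}{g}$ by the definition of the grade; hence $\f{r}_g=\f{r}_{g-1}-\alpha_{g-1}^{(s)}\f{Ap}_{g-1}\in\krylov{A}{b}{g}=\Span(\f{r}_0,\dots,\f{r}_{g-1})$, and I can write $\f{r}_g=\sum_{i=0}^{g-1}c_i\f{r}_i$. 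Taking the $\f{A}^{s}$-inner product with $\f{r}_j$ for each $0\le j\le g-1$ and using \cref{lem:properties_CD} (which gives $\lr{\f{r}_j,\f{A}^{s}\f{r}_g}=0$ and $\lr{\f{r}_j,\f{A}^{s}\f{r}_i}=0$ for $i\neq j$) collapses the expansion to $0=c_j\lr{\f{r}_j,\f{A}^{s}\f{r}_j}$ for every $j<g$.

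It then remains to show $\lr{\f{r}_j,\f{A}^{s}\f{r}_j}\neq0$ for $0\le j\le g-1$, and this is where consistency enters. Since $\f{b}\in\Range(\f{A})$ and $\f{x}_0=\zero$, every residual satisfies $\f{r}_j=\f{b}-\f{Ax}_j\in\Range(\f{A})$, while \cref{lem:expanding_subspace} forces $\f{r}_0,\dots,\f{r}_{g-1}$ to be linearly independent, hence nonzero. As $\f{A}$ is symmetric, $\Range(\f{A})\cap\Null(\f{A})=\{\zero\}$, so $\f{r}_j\notin\Null(\f{A})$, i.e.\ $\f{Ar}_j\neq\zero$. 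For $s=0$ this gives $\lr{\f{r}_j,\f{r}_j}=\|\f{r}_j\|^2>0$; for $s=1$ it gives $\lr{\f{r}_j,\f{Ar}_j}\neq0$, either because $\f{A}\succeq\zero$ makes it $\|\f{A}^{1/2}\f{r}_j\|^2>0$, or, in the indefinite case, because $\f{r}_j$ is then not a zero-curvature direction by \cref{assumpt:unlucky}. Consequently $c_j=0$ for all $j$, i.e.\ $\f{r}_g=\zero$.

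I expect the only real friction to be the (minor) bookkeeping needed to guarantee that \cref{alg:cgcr} actually reaches iteration $g$, so that the vectors and scalars above are all well-defined: no lucky breakdown $\f{Ap}_{k-1}=\zero$ should occur for $k-1<g$, and the denominators $\lr{\f{p}_{k-1},\f{A}^{s+1}\f{p}_{k-1}}$ and $\lr{\f{r}_{k-1},\f{A}^{s}\f{r}_{k-1}}$ should not vanish before iteration $g$. This can be dispatched by the same grade argument as in the converse direction of \cref{thm:cg:Ap} --- if $\f{p}_{k-1}\neq\zero$ and $\f{Ap}_{k-1}=\zero$, expanding $\f{p}_{k-1}$ in $\{\f{b},\dots,\f{A}^{k-1}\f{b}\}$ shows $\dim\krylov{A}{b}{k+1}=\dim\krylov{A}{b}{k}$, forcing $k=g$ --- together with the very same ``$\f{r}_{k-1}\in\Range(\f{A})\setminus\{\zero\}$ plus \cref{assumpt:unlucky}'' reasoning used above to exclude vanishing denominators. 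Note this argument does not invoke \cref{lem:pg=0_consistent}, avoiding any circularity.
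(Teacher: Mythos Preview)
Your argument is correct and takes a genuinely different route from the paper. The paper invokes \cref{lem:solution_in_krylov_g} to produce a normal solution $\f{x}^{\star}\in\krylov{A}{b}{g}$, expands it in the basis $\{\f{p}_0,\dots,\f{p}_{g-1}\}$, and then verifies that the resulting coefficients coincide with the step-sizes $\alpha_{i-1}^{(s)}$ of \cref{alg:cgcr}, so that $\f{x}_g=\f{x}^{\star}$ and hence $\f{r}_g=\zero$. You instead work directly with the residuals: because $\f{r}_g\in\krylov{A}{b}{g}=\Span(\f{r}_0,\dots,\f{r}_{g-1})$ while $\f{r}_g$ is $\f{A}^{s}$-orthogonal to each $\f{r}_j$, $j<g$, the expansion collapses once you establish $\lr{\f{r}_j,\f{A}^{s}\f{r}_j}\neq0$. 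Your approach is more elementary in that it avoids \cref{lem:solution_in_krylov_g} altogether and is essentially the classical ``there cannot be $g+1$ mutually orthogonal vectors in a $g$-dimensional space'' argument; the paper's approach, by contrast, makes the link to the pseudo-inverse solution explicit, which dovetails with how \cref{prop:convergence_consistent} is stated. Your handling of well-definedness (no premature breakdown, nonvanishing denominators) via the consistency-driven inclusion $\f{r}_{k-1},\f{p}_{k-1}\in\Range(\f{A})\setminus\{\zero\}$ together with \cref{assumpt:unlucky} is sound, and you are right to flag that it does not rely on \cref{lem:pg=0_consistent}.
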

	\begin{proof}
		We first note that when $\f{b}\in\Range(\f{A})$, for the pseudo-inverse solution $\f{A}^\dagger\f{b}$, we have $\f{r}^{\star} = \f{b} - \f{A}\f{A}^\dagger\f{b} = \zero$.
		Let $\f{x}^{\star} \in \krylov{A}{b}{g}$ be the pseudo-inverse solution from \cref{lem:solution_in_krylov}. Using \cref{lem:expanding_subspace}, we can express $\f{x}^{\star}$ as
		\begin{align*}
			\f{x}^{\star} & = \sum_{i=1}^g\alpha_{i-1}^{(s)}\f{p}_{i-1},
		\end{align*}
		for some coefficients $\alpha_{i-1}^{(s)}$'s. It suffices to show that $\alpha_i^{(s)}$'s correspond to those from \cref{alg:cgcr}. From $ \zero = \f{r}^{\star}  = \f{b} - \f{A}\f{x}^{\star} = \f{b} - \sum_{i=1}^g\alpha_{i-1}^{(s)}\f{Ap}_{i-1} $, we have
		\begin{align*}
			\f{0} = \f{A}^{s}\f{r}^{\star} & = \f{A}^{s}\f{b} - \sum_{i=1}^g\alpha_{i-1}^{(s)}\f{A}^{s+1}\f{p}_{i-1}.
		\end{align*}
		Multiplying $\f{p}_j$, for each $0 \leq j \leq g-1$, from the left, using \cref{lem:properties_CD} as well as  $\f{p}_j = \f{r}_j + \beta_{j-1}\f{p}_{j-1}$, gives $\alpha_{i-1}^{(s)} = \lr{\f{r}_{i-1}, \f{A}^{s}\f{r}_{i-1}}/ \lr{\f{p}_{i-1}, \f{A}^{s+1}\f{p}_{i-1}}$, which are precisely the step-sizes calculated within \cref{alg:cgcr}. Thus, $\f{r}_{g} = \f{r}^{\star} = \zero$.
	\end{proof}

    \section{Orthogonal-Residual and Inconsistent Symmetric Systems}
    \label{sec:appendix:or}
    In this section, we provide further insights into why CG fails to converge for inconsistent systems. Traditionally, CG is viewed as a specific algorithmic implementation of the Orthogonal-Residual (OR) framework\footnote{Also referred to as the Full Orthogonalization Method (FOM) in other works \cite{saad2003iterative}.} when applied to PD matrices. More generally, OR enforces the following projection conditions:  
    \begin{subequations}
    \label{cond:OR}
    \begin{align}
        \f{x}_k & \in \krylov{A}{b}{k}, \label{cond:cg_1}\\ 
        \f{r}_k & \perp \krylov{A}{b}{k}, \label{cond:cg_2}
    \end{align}
    \end{subequations}
    where $\f{A}$ is a square but not necessarily symmetric matrix. In the special case where $\f{A}$ is PD, the Arnoldi process to construct a basis for $\krylov{A}{b}{k}$ reduces to the Lanczos process, and OR can be algorithmically implemented as \cref{alg:cg}, i.e., the CG algorithm (see \cite{saad2003iterative} for further details).
    \begin{remark}
        Under this interpretation, we distinguish between the \textit{framework} and its \textit{algorithmic implementations}. For example, the Minimum-Residual (MR) framework is defined as a projection framework that enforces $\mathbf{x}_k \in \mathcal{K}_k(\mathbf{A}, \mathbf{b})$ and $\mathbf{r}_k \perp \mathbf{A}\mathcal{K}_k(\mathbf{A}, \mathbf{b})$. One particular algorithmic implementation of the MR framework is GMRES. For symmetric matrices, other implementations include the CR and MINRES algorithms; see \cref{table:overview_methods_implementations} for an overview.
    \end{remark}
    \begin{table}[htbp]
	\centering
    \bgroup
    \def\arraystretch{1.2}
	\scalebox{0.95}{\begin{tabular}{|c|c|c|c|c|c|c|}
        \hline
        \multirow{3}{3cm}{\centering Krylov Subspace Projection Framework} & \multicolumn{6}{c|}{Algorithmic implementations of the framework}\\
		\cline{2-7}
		& \multirow{2}{*}{$\f{A}$ is square} & \multirow{2}{*}{$\f{A}$ is PD} & \multicolumn{2}{c|}{$\f{A}$ is PSD} & \multicolumn{2}{c|}{$\f{A}$ is indefinite}\\
        \cline{4-7}
        & & & $\f{b} \in \text{R}(\f{A})$ & $\f{b} \notin \text{R}(\f{A})$ & $\f{b} \in \text{R}(\f{A})$ & $\f{b} \notin \text{R}(\f{A})$ \\
        \hline
    	MR framework & \multirow{2}{*}{GMRES} & CR, & CR, & CR, & CR*, & CR*,\\[-4pt]
        $\f{r}_k \perp \f{A}\krylov{A}{b}{k}$ & & MINRES & MINRES & MINRES & MINRES & MINRES\\
        \hline
		OR framework & \multirow{2}{*}{FOM\cite{saad2003iterative}} & CG, & CG, & \multirow{2}{*}{\xmark} & CG*, & \multirow{2}{*}{\xmark} \\[-4pt]
        $\f{r}_k \perp \krylov{A}{b}{k}$ & & LanczosCG\cite{choi2006iterative} & LanczosCG & & LanczosCG* & \\
        \hline
	\end{tabular}}
    \egroup
    \caption{Range$(\f{A})$ is abbreviated as R$(\f{A})$ in this table. An overview of the MR and OR frameworks, along with their respective algorithmic implementations under various conditions. The symbols $\ast$ and \xmark \, indicate, respectively, the necessity of \cref{assumpt:unlucky} and the nonexistence of such algorithms (see \cref{thm:or}).\label{table:overview_methods_implementations}}
    \end{table}
    As demonstrated in \cref{sec:cg}, CG fails to converge to a normal solution for inconsistent systems. Given that CG is derived from the OR framework, one might ask whether an alternative algorithmic implementation of OR for symmetric matrices could converge to a normal solution under inconsistent systems. We establish that no such implementation exists. To demonstrate this, we restate \cref{lem:singular_T_at_g}, originally presented in \cite[Proposition 2.7]{choi2006iterative}, and provide a proof here for completeness.
    \begin{lemma}\label{lem:singular_T_at_g}
        Let $\f{b} \in \real^{d}$ and $\f{A} \in \real^{d \times d}$ be  symmetric matrix. Consider the Lanczos process, which generates the orthonormal matrix $\f{V}_g \in \mathbb{R}^{d\times g}$ and the tridiagonal matrix $\f{T}_g \in \mathbb{R}^{g \times g}$, such that $\f{AV}_g = \f{V}_g\f{T}_g$. We have $\f{b} \notin \Range(\f{A})$ if and only if $\f{T}_g$ is singular.
    \end{lemma}
    \begin{proof}
        First, we note that since $\f{b} \notin \Range(\f{A})$, we must have $\f{b} \neq \f{0}$ and $\f{b}^\perp \neq \f{0}$. By the Lanczos process and \cref{lem:b_perp_in_krylov_g}, given that $ \f{b}^\perp \in \krylov{A}{b}{g} = \Span(\f{V}_g) $, there exists a nonzero vector $ \f{c} \in \mathbb{R}^g $ such that $ \f{V}_g\f{c} = \f{b}^\perp $. Furthermore, since  
        $$ \f{T}_g\f{c} = \f{V}_g^\top\f{AV}_g\f{c} = \f{V}_g^\top\f{A}\f{b}^\perp = \f{0}, $$  
        it follows that $ \f{T}_g $ is singular.  
        Conversely, let $ \f{c} \neq \f{0} $ be a vector in $ \Null(\f{T}_g) $. Then,  $\f{AV}_g\f{c} = \f{V}_g\f{T}_g\f{c} = \f{0}$.   
        Hence, we must have $ \f{0} \neq \f{V}_g\f{c} \in \Null(\f{A}) $.  As a result, given that  
        $ \Span(\f{V}_g) = \Span([\f{b}, \f{Ab}, \dots, \f{A}^{g-1}\f{b}])$,   
        we conclude that $ \f{V}_g\f{c} = \alpha\f{b}^\perp $ for some nonzero constant $ \alpha $.
    \end{proof}
    We are now ready to state and prove the main result of this section.
    \begin{theorem}\label{thm:or}
            Any algorithm derived from the Orthogonal-Residual (OR) framework for symmetric matrices cannot converge to a solution of the normal equation under inconsistent systems.  
        \end{theorem}
    \begin{proof}
        By \cref{lem:solution_in_krylov}, a normal solution can only exist in either $\krylov{A}{b}{g-1}$ or $\krylov{A}{b}{g}$. The proof involves demonstrating that any algorithm based on the OR framework cannot find a normal solution in either of these two subspaces.
        Assume the contrary that , i.e., there exists an algorithm based on the OR framework \cref{cond:OR} generating $\mathbf{x}_{k} = \mathbf{A}^\dagger\mathbf{b} + a(\mathbf{I} - \mathbf{A}\mathbf{A}^\dagger)\mathbf{b} \in \mathcal{K}_{k}(\mathbf{A}, \mathbf{b})$ for some $a$, where $k = g-1$ or $k=g$ (recall that by \cref{lem:normal_solution_y_b}, this can be the only form of a normal solution in these spaces). 
        
        \begin{enumerate}[label = (\roman*)]
        \item First, consider the former case of $k=g-1$. Let  $\f{V}_k \in \mathbb{R}^{d\times k}$ be  the orthogonal matrix whose columns are the basis vectors for $\krylov{A}{b}{k}$ such that $\f{AV}_k = \f{V}_{k}\f{T}_k + \beta_{k} \f{v}_{k+1} \f{e}_{k}^\top$ where $\f{T}_{k} \in \mathbb{R}^{k \times k}$ is a symmetric tridiagonal matrix. Such construction is guaranteed by the Lanczos process.  From \cref{cond:cg_1}, we can write $\mathbf{x}_{g-1} = \mathbf{V}_{g-1}\mathbf{y}$, which implies,
        \begin{align*}
            \mathbf{0} = \mathbf{V}_{g-1}^\top\mathbf{r}_{g-1} & = \mathbf{V}_{g-1}^\top(\mathbf{b} - \mathbf{A}\mathbf{V}_{g-1}\mathbf{y}) = \|\mathbf{b}\|\mathbf{e}_1 - \mathbf{T}_{g-1}\mathbf{y},
        \end{align*}
        where the first equality follows from \cref{cond:cg_2}. 
        Our proof now consists of showing that no vector $\mathbf{y}$ can satisfy the above equality. 
        
        \begin{itemize}
            \item Suppose $\f{T}_{g-1}$ is nonsingular. We have $\f{y} = \f{T}_{g-1}^{-1}\|\f{b}\|\f{e}_1$. Since $\f{x}_{g-1}$ is a normal solution, i.e., $\f{A}\f{r}_{g-1} = \f{0} $, it follows that
        \begin{align*}
             \f{0} & = \f{A}(\f{b} - \f{Ax}_{g-1}) = \f{A}(\f{b} - \f{AV}_{g-1}\f{y}) = 
             \f{A}\left(\f{b} - \begin{bmatrix}
                \f{V}_{g-1} & \f{v}_g
            \end{bmatrix}\begin{bmatrix}
                \f{T}_{g-1}\\ \beta_{g}\f{e}_{g-1}^\top
            \end{bmatrix}\f{y}\right) \\
            & = \f{A}(\f{b} - (\f{V}_{g-1}\f{T}_{g-1} + \beta_{g}\f{v}_g\f{e}_{g-1}^\top)\f{y}) = \f{A}(\f{b} - (\f{V}_{g-1}\f{T}_{g-1} + \beta_{g}\f{v}_g\f{e}_{g-1}^\top)\f{T}_{g-1}^{-1}\|\f{b}\|\f{e}_1)\\
            & = \f{A}(\f{b} - (\f{b} + \beta_g\f{v}_g\f{e}_{g-1}^\top\f{T}_{g-1}^{-1}\|\f{b}\|\f{e}_1)) = - \beta_g\f{A}\f{v}_g\f{e}_{g-1}^\top\f{T}_{g-1}^{-1}\|\f{b}\|\f{e}_1.
        \end{align*}
        Recall the construction of the vector $\f{v}_g$ from the three-term recurrence of the Lanczos process,
        \begin{align*}
            \f{Av}_{g-1} - \beta_{g-1}\f{v}_{g-2} - \alpha_{g-1}\f{v}_{g-1} = \beta_{g}\f{v}_g.
        \end{align*}
        Since all the vectors on the left-hand side of the above equality are non-zero and linearly independent, we have $\f{v}_g^\top\f{Av}_{g-1} = \beta_{g} \not= 0$ by the orthogonality of the vectors $\f{v}_i$. Together, we obtain,
        \begin{align*}
            0 & = \f{v}_{g-1}^\top\f{0} = - \beta_g\f{v}_{g-1}^\top\f{A}\f{v}_g\f{e}_{g-1}^\top\f{T}_{g-1}^{-1}\|\f{b}\|\f{e}_1 = -\beta_g^2\|\f{b}\|\f{e}_{g-1}^\top\f{T}_{g-1}^{-1}\f{e}_1\\
            & = - \beta_g^2\|\f{b}\| \f{e}_{g-1}^\top\f{c} = - \beta_g^2\|\f{b}\| c_{g-1},
        \end{align*}
        where $\f{c} \triangleq \f{T}_{g-1}^{-1}\f{e}_1$. %
        Since $\beta_g^2\|\f{b}\| \neq 0$, we must have  $c_{g-1} = 0$. In this case, we have
        \begin{align*}
           \f{e}_1 = \f{T}_{g-1}\f{c} & = \begin{bmatrix}
                \alpha_1 & \beta_2 & & & \\
                \beta_2 & \alpha_2 & \beta_3 & & \\
                & & & \ddots & \beta_{g-1}\\
                & & & \beta_{g-1} & \alpha_{g-1}
            \end{bmatrix}\begin{bmatrix}
                c_1 \\ c_2 \\ \vdots \\ c_{g-1}
            \end{bmatrix}\\
            & = \begin{bmatrix}
                c_1\alpha_1 + c_2\beta_2\\
                \vdots\\
                c_{g-3}\beta_{g-2} + c_{g-2}\alpha_{g-2} + c_{g-1}\beta_{g-1}\\
                c_{g-2}\beta_{g-1} + c_{g-1}\alpha_{g-1}
            \end{bmatrix}= \begin{bmatrix}
                c_1\alpha_1 + c_2\beta_2\\
                \vdots\\
                c_{g-3}\beta_{g-2} + c_{g-2}\alpha_{g-2}\\
                c_{g-2}\beta_{g-1}
            \end{bmatrix}.
        \end{align*}
        By the construction of the Lanczos process, the constants $\beta_2, \cdots, \beta_{g-1}$ are non-zero. This implies that  $c_{g-2} = c_{g-3} = \cdots = c_1 = 0$, and hence $\f{c} = \f{0}$. However, since $\f{T}_{g-1}$ is assumed to be nonsingular, $\f{T}_{g-1}^{-1}\f{e}_1$ cannot be a zero vector, which leads to a contradiction.
        
        \item So now suppose $\f{T}_{g-1}$ is singular, which in particular implies that it has a nontrivial null-space. We need to show that $\f{e}_1 \notin \Range(\f{T}_{g-1})$. Suppose to the contrary that $\f{e}_1 \in \Range(\f{T}_{g-1})$, or equivalently, for all non-zero $\f{c} \in \Null(\f{T}_{g-1})$, we have $\langle \f{e}_1, \f{c} \rangle = c_1 = 0$. Take any non-zero $\f{c} \in \Null(\f{T}_{g-1})$. It follows that
        \begin{align*}
           \f{0} = \f{T}_{g-1}\f{c} = \begin{bmatrix}
                \alpha_1 & \beta_2 & & & \\
                \beta_2 & \alpha_2 & \beta_3 & & \\
                & & & \ddots & \beta_{g-1}\\
                & & & \beta_{g-1} & \alpha_{g-1}
            \end{bmatrix}\begin{bmatrix}
                c_1 \\ c_2 \\ \vdots \\ c_{g-1}
            \end{bmatrix} = \begin{bmatrix}
                c_1\alpha_1 + c_2\beta_2\\
                c_1\beta_2 + c_2\alpha_2 + c_3\beta_3\\
                \vdots\\
                c_{g-2}\beta_{g-1} + c_{g-1}\alpha_{g-1}
            \end{bmatrix}. 
        \end{align*}
        Again, by the constriction of the Lanczos process, $\beta_2, \cdots, \beta_{g-1}$ are non-zero constants, and hence if $\langle \f{e}_1, \f{c} \rangle = c_1 = 0$, then $c_2 = c_3 = \cdots = c_{g-1} = 0$. This implies $\f{c} = \f{0}$, contradicting the non-zero assumption on $\f{c}$.  Hence, it follows that there must exists a non-zero $\f{c} \in \Null(\f{T}_{g-1})$ such that $\langle \f{e}_1, \f{c} \rangle \neq 0$, i.e., $\f{e}_1 \notin \Range(\f{T}_{g-1})$.
        
        \end{itemize}
        The above argument shows that no method based on the OR framework for symmetric matrices under inconsistent systems can find a normal solution in the subspace $\krylov{A}{b}{g-1}$. 

        \item Now, we consider the latter case of $k=g$. By \cref{cond:cg_1}, $\f{x}_{g} = \f{V}_g\f{y}$, and by \cref{cond:cg_2},
        \begin{align*}
            \f{0} = \f{V}_g^{\top}\f{r}_g & = \f{V}_g^\top(\f{b} - \f{AV}_g\f{y}) = \|\f{b}\|\f{e}_1 - \f{V}_g^\top\f{AV}_g\f{y} = \|\f{b}\|\f{e}_1 - \f{T}_g\f{y}.
        \end{align*}
        However, by \cref{lem:singular_T_at_g}, $\f{T}_g$ is singular. So, a similar argument as above can establish that no vector $\f{y}$ can satisfy the above equality, i.e., the conditions \cref{cond:cg_1,cond:cg_2} cannot be simultaneously satisfied in the subspace $\krylov{A}{b}{g}$.
        \end{enumerate}
    \end{proof}
\end{document}